\documentclass[11pt]{amsart}

\usepackage{graphicx,color,latexsym}
\usepackage{verbatim}
\usepackage{enumerate}
\usepackage[all,cmtip]{xy}
\usepackage{enumitem}

\usepackage{subfigure}
\usepackage{mathrsfs}
\usepackage{amscd, amssymb, amsmath, amsthm, graphics}
\usepackage{amsmath,amsfonts,amsthm,amssymb}
\usepackage{latexsym,amsmath}
\usepackage{graphicx}

\usepackage{latexsym}
\usepackage{enumerate}
\usepackage{color}

\newtheorem{theorem}{Theorem}[section]
\newtheorem{prop}[theorem]{Proposition}
\newtheorem{lemma}[theorem]{Lemma}
\newtheorem{remark}[theorem]{Remark}

\newtheorem{definition}[theorem]{Definition}
\newtheorem{cor}[theorem]{Corollary}
\newtheorem{conjecture}[theorem]{Conjecture}
\newtheorem{speculation}[theorem]{Speculation}

\newtheorem{lem}[theorem]{Lemma}

\newtheorem{condition}[theorem]{Condition}

\theoremstyle{definition}

\usepackage{xpatch}
\makeatletter
\xpatchcmd{\@thm}{\thm@headpunct{.}}{\thm@headpunct{}}{}{}
\makeatother

%
%
%
%
%
%
%
%
%


\DeclareMathOperator{\rk}{rank}



\newcommand{\wh}[1]{\widehat{#1}}

\newcommand{\bthm}{\begin{theorem}}
\newcommand{\ethm}{\end{theorem}}

\newcommand{\blem}{\begin{lem}}
\newcommand{\elem}{\end{lem}}

\newcommand{\bcor}{\begin{cor}}
\newcommand{\ecor}{\end{cor}}

\newcommand{\bprop}{\begin{prop}}
\newcommand{\eprop}{\end{prop}}

\newcommand{\brmk}{\begin{remark}}
\newcommand{\ermk}{\end{remark}}

\newcommand{\bpf}{\begin{proof}}
\newcommand{\epf}{\end{proof}}

\newcommand{\beq}{\begin{equation}}
\newcommand{\eeq}{\end{equation}}



\numberwithin{equation}{section}

\def\R{\mathbb{R}}

\def\CP{\mathbb{CP}}

\def\w{\omega}

\def\xkm2{\overline{X}_{k-2}}

\setlength{\oddsidemargin}{0.1cm} 
\setlength{\evensidemargin}{0.1cm}
\setlength{\textwidth}{17cm}

\begin{document}

\title[]{The Minimal Genus Problem\\ \vspace{0.2cm}\tiny A Quarter Century of Progress\\ \vspace{0.5cm} {\it In honor of Prof. Banghe Li, on the occasion of his  80'th birthday, in recognition of his fundamental contributions to the minimal genus problem. }}

\author{Josef G. Dorfmeister}

\address{Department of Mathematics\\ North Dakota State University\\ Fargo, ND 58102}
\email{josef.dorfmeister@ndsu.edu}

\author{Tian-Jun Li}
\address{School  of Mathematics\\  University of Minnesota\\ Minneapolis, MN 55455}
\email{tjli@math.umn.edu}

\begin{abstract}  This paper gives a survey of the progress on the minimal genus problem since Lawson's \cite{L} survey.
\end{abstract}

\maketitle

\tableofcontents

\section{Introduction}

Let $M$ be a smooth, closed 4-manifold.  Then, for any given class $A\in H_2(M,\mathbb Z)$, there exists a connected, smoothly embedded surface $\Sigma$ with $[\Sigma]=A$.  It is possible to increase the genus of $\Sigma$ to obtain a homologous surface.  On the other hand, determining the smallest possible genus of such a surface representing $A$, the minimal genus $g_m(A)$, is generally very hard.  With the advent of gauge theoretic methods in 4-manifold theory in the 80's and 90's it became possible to determine this genus for certain manifolds.  These results are described in the excellent survey by Lawson \cite{L}.

In the quarter century that has passed since that survey was written, further progress has been made on the minimal genus problem.  Some of these results continue to use gauge theoretic methods; other results begin to answer the question for classes for which Seiberg-Witten theory fails to provide any information.

In general, this problem has been addressed as follows:\begin{enumerate}
\item Submanifolds with an auxiliary structure, for example a complex structure or symplectic structure, are studied, and a generalized version of the Thom conjecture is proven for these structures.  Such submanifolds are known to be genus minimizing.
\item A lower bound on the genus is determined using an adjunction type inequality.  An upper bound is determined through an explicit construction of an embedded, connected surface.
\end{enumerate}

Adjunction-type inequalities, leading to a lower bound on the possible minimal genus, are surveyed in Section \ref{s:adj}.  Wall \cite{Wa2} showed that there cannot be any cohomology adjunction formulas for manifolds with strongly indefinite intersection forms ($b^\pm \ge 2$).  It thus remains to study the case of manifolds with $b^\pm=1$, or those with definite intersection forms.  Section \ref{s:b+1} surveys results for $b^+=1$, largely for surfaces of non-negative self-intersection (or, equivalently $b^-=1$ and non-positive self-intersection); see also the introduction of \cite{DHL}.

The remainder of this survey describes minimal genus determinations obtained for certain classes of manifolds: rational manifolds, ruled manifolds and elliptic surfaces.  A new result for pairs of disjoint surfaces in $b^\pm=2$ definite manifolds is given in  Section \ref{s:new}.

This survey focuses on orientable surfaces in closed manifolds.  Levine-Ruberman-Strle \cite{LRS} studied non-orientable surfaces $\Sigma$ of genus $g$ in four-manifolds $M$.  The main result here concerns a homology cobordism between rational homology spheres, and proves an inequality on the genus of an essential embedding $\Sigma$.  The statement (and proof) of this result involves certain invariants defined by Osv\'ath-Szab\'o \cite{OS3}.  They also obtained adjunction-type results adapted to the non-orientable setting for certain four-manifolds; see also Iida-Mukherjee-Taniguchi (Theorem 4.6, \cite{IMT}) for an adjunction-type result concerning manifolds with a boundary.
Notably, Gompf (\cite{Go}) produces new powerful existence results for exotic smooth structures on open manifolds using a minimal-genus function and its analogue for end homology.

{\bf Notation and Conventions:}  $H_2(M,\mathbb Z)$ and $H^2(M,\mathbb Z)$ will be identified via Poincar\'e duality and will not be distinguished unless crucial.  A class $c\in H_2(M,\mathbb Z)$ is called {\it characteristic} if $c\cdot A=A\cdot A$ (mod 2) for all $A\in H_2(M,\mathbb Z)$ and $\sigma(M)$ denotes the signature of the manifold $M$.  Throughout, unless otherwise stated, homology and cohomology are assumed to be torsion free.

\section{Adjunction-Type Inequalities}\label{s:adj}

In this section we describe some adjunction-type inequalities obtained for certain classes of manifolds; most of these result from Seiberg-Witten theory or perturbations thereof.  

The notion of an adjunction inequality is due to Kronheimer-Mrowka, first appearing in \cite{KM1} and \cite{KM2}.  A strengthened inequality for the Donaldson invariant appeared in \cite{KM3} while a version for the Seiberg-Witten invariants led to the proof of the Thom conjecture in \cite{KM4}.     

\begin{definition}
Let $A\in H_2(M,\mathbb Z)$ and let $\Sigma$ be a connected embedded surface representing the class $A$.  Let $\chi(\Sigma)$ denote the Euler characteristic of this surface.  An {\it adjunction-type inequality} is an inequality of the form
\[
-\chi(\Sigma)\ge A\cdot A + f(A),
\]
where $A\cdot A$ denotes the self-intersection given by the intersection pairing of $M$, and $f(A)$ is a function generally of the following structure:  for the manifold $M$, a special set of classes in $H_2(M,\mathbb Z)$ is determined and evaluated, using the intersection pairing, against $A$, so $f(A)$ is a degree 1 function of $A$ in some way optimal with respect to this set of values.
\end{definition}

\subsection{Ozsv\'ath-Szab\'o's Adjunction Inequalities}

The original Thom conjecture asserts that the genus of an embedded surface $\Sigma$ representing a homology class $A\in H_2(\mathbb CP^2,\mathbb Z)$ is minimized by an algebraic curve.  This conjecture has been proven and subsequently generalized in a number of different directions; see Theorems 12 and 29, \cite{L}.  Ozsv\'ath and Sz\'abo proved a symplectic version as follows:

\begin{theorem}[Thm 1.1, \cite{OS1}](Symplectic Thom Conjecture) An embedded symplectic surface in a closed, symplectic four-manifold is genus-minimizing in its homology class.

\end{theorem}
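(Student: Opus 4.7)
The plan is to derive the statement from Seiberg--Witten theory combined with the symplectic adjunction formula. For the embedded symplectic surface $\Sigma \subset (M,\omega)$, the symplectic adjunction formula gives
\[
2g(\Sigma) - 2 = [\Sigma]\cdot[\Sigma] + K_{\omega}\cdot[\Sigma],
\]
where $K_\omega$ denotes the canonical class of $(M,\omega)$. Hence it suffices to establish the Seiberg--Witten adjunction inequality
\[
2g(\Sigma') - 2 \ge [\Sigma']\cdot[\Sigma'] + K_\omega\cdot[\Sigma']
\]
for every smoothly embedded connected surface $\Sigma'$ with $[\Sigma'] = [\Sigma]$: combined with the equality for $\Sigma$, this yields $g(\Sigma') \ge g(\Sigma)$ as required.

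The crucial input for such an inequality is Taubes's theorem asserting that the $\mathrm{Spin}^{c}$ structure determined by $\omega$ has non-vanishing Seiberg--Witten invariant, so that $K_\omega$ is a Seiberg--Witten basic class of $M$. The standard Kronheimer--Mrowka adjunction inequality for basic classes then immediately yields the desired bound whenever $[\Sigma']\cdot[\Sigma'] \ge 0$ and $b^{+}(M) > 1$, which covers a large portion of the cases.

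The main obstacle is to treat the remaining situations, in particular surfaces of negative self-intersection (and, secondarily, the case $b^{+}(M)=1$). My plan is to enlarge $(M,\omega)$ symplectically, using blow-ups and symplectic fiber sums, to a manifold in which both $b^{+}>1$ holds and the lifted class has non-negative self-intersection, without altering the genus. Under a blow-up at a point, $(M,\omega)$ remains symplectic with canonical class $K_\omega + E$, while under a symplectic fiber sum along a high-genus Lefschetz fiber the canonical class transforms in a predictable way and $b^{+}$ is raised. The adjunction inequality obtained on the enlarged manifold can then be pulled back to $M$ via the homological identities $E\cdot E=-1$, $K_\omega\cdot E=0$, and similar relations for the fiber sum. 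The delicate point---and the real content of Ozsv\'ath--Szab\'o's contribution beyond the naive reduction---is ensuring that the sign of $K\cdot[\Sigma']$ is correctly captured throughout this process, so that the adjunction inequality descends to the original symplectic manifold in the form required by the adjunction formula for $\Sigma$. That sign control is where I expect the substantive difficulty to lie.
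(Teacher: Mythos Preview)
Your high-level reduction is correct: the symplectic adjunction formula for $\Sigma$ together with an adjunction inequality for any smooth $\Sigma'$ in the same class is exactly what is needed, and the non-negative self-intersection case with $b^+>1$ is indeed the previously known part. The gap is in your proposed treatment of negative self-intersection. Neither blow-ups nor symplectic fiber sums raise the self-intersection of the class you care about: blowing up at a point off $\Sigma'$ leaves $[\Sigma']\cdot[\Sigma']$ unchanged, blowing up on $\Sigma'$ lowers it, and a fiber sum along a surface disjoint from $\Sigma'$ again leaves it unchanged. Any operation that genuinely raises the square forces you to tube $\Sigma'$ to auxiliary surfaces, which alters the genus in exactly the way that destroys the inequality you are trying to prove. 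The ``sign control'' you flag is therefore not a technicality to be cleaned up at the end; it is the entire content of the negative-square case, and the geometric-reduction strategy does not supply it.

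The paper (following Ozsv\'ath--Szab\'o) describes a different mechanism. Rather than reducing to non-negative square, one proves directly a relation among Seiberg--Witten invariants in the presence of an embedded surface of negative square (Theorem~1.3 of \cite{OS1}, via a neck-stretching/gluing argument): if a smooth $\Sigma'$ of small genus violated the adjunction inequality for the canonical $\mathrm{Spin}^c$ structure $s_\omega$, this relation would produce another basic class of the form $s_\omega+\epsilon\,\mathrm{PD}[\Sigma']$. Taubes's constraints, however, say more than ``$K_\omega$ is basic''---they say $s_\omega$ is extremal among basic classes (it maximizes $\langle c_1(s),[\omega]\rangle$), which rules out such a shift. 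That extremality, not a geometric enlargement, is what replaces the missing sign control and yields the adjunction inequality for negative self-intersection (Corollary~1.7 of \cite{OS1}, stated here as Theorem~\ref{t:sw}).
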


Previous results in this direction had only shown this to be true for those classes with non-negative self-intersection.  The key argument in their proof is a calculation of certain Seiberg-Witten relations for negative self-intersection surfaces (Thm 1.3, \cite{OS1}).  As an auxiliary result, they obtained an adjunction inequality for negative self-intersection classes (see Theorems 11 and 17, \cite{L} for the non-negative case).

\begin{theorem}[Cor. 1.7, \cite{OS1}] \label{t:sw} Let $M$ be a smooth, closed four-manifold of simple type with $b^+>1$ and $\Sigma$ a smoothly embedded, oriented, closed surface of genus $g>0$ and $[\Sigma]\cdot[\Sigma]<0$.  Then, for all Seiberg-Witten basic classes $s$, 
\[
-\chi(\Sigma)\ge [\Sigma]\cdot[\Sigma] +|\langle [\Sigma], c_1(s)\rangle|.
\]

\end{theorem}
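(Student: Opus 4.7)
A natural first attempt at Cor.~1.7 is to blow up to reduce to the Kronheimer--Mrowka adjunction inequality for non-negative self-intersection (Theorems 11 and 17 of \cite{L}), but this fails. If $\tilde M = M\#\,n\,\overline{\CP^2}$ and $\tilde\Sigma$ is the proper transform of $\Sigma$ after blowing up at $n$ smooth points of $\Sigma$, then $[\tilde\Sigma]\cdot[\tilde\Sigma] = [\Sigma]\cdot[\Sigma]-n$, which moves \emph{further} into the negative regime rather than toward zero; blowing up at points off $\Sigma$ leaves $[\Sigma]\cdot[\Sigma]$ unchanged; and connect-summing with $\CP^2$ destroys Seiberg--Witten simple type. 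One must therefore work directly in the negative self-intersection regime.

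The approach I would take is a neck-stretching / localization argument in the spirit of Taubes's $\mathrm{SW}=\mathrm{Gr}$ correspondence. Fix a tubular neighborhood $\nu(\Sigma)$ and perturb the Seiberg--Witten equations on $M$ by adding a large multiple $r\eta$ of a two-form supported in $\nu(\Sigma)$. As $r\to\infty$, solutions concentrate on $\Sigma$ and, modulo gauge, converge to vortex configurations on a holomorphic line bundle $L\to\Sigma$ whose degree $d$ is pinned down by $\langle c_1(s),[\Sigma]\rangle$ and $[\Sigma]\cdot[\Sigma]$ via Riemann--Roch. The moduli space of such vortices is the symmetric product $\mathrm{Sym}^d\Sigma$. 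Since $M$ has simple type and $b^+(M)>1$, the Seiberg--Witten moduli space is zero-dimensional and cut out transversely; non-vanishing of $SW_M(s)$ then forces $0\le d\le g-1$. Converting this back to cohomological data yields $\langle c_1(s),[\Sigma]\rangle + [\Sigma]\cdot[\Sigma]\le 2g-2$, and repeating the argument after reversing the orientation of $\Sigma$ (which negates the Chern-class pairing while preserving $g$ and $[\Sigma]\cdot[\Sigma]$) upgrades this to the absolute value appearing in the stated inequality.

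The main obstacle is the rigorous execution of the localization: proving that Seiberg--Witten solutions on $M$ genuinely concentrate on $\Sigma$ as $r\to\infty$, identifying the limiting moduli space with $\mathrm{Sym}^d\Sigma$, and supplying a gluing theorem that reconstructs honest Seiberg--Witten solutions on $M$ from vortex data on $\Sigma$. This Taubes-type PDE analysis is the gauge-theoretic content that carries the proof in the negative self-intersection regime, and is the substitute for the blow-up reduction that works when $[\Sigma]\cdot[\Sigma]\ge 0$; once it is in hand, the passage to the final inequality is essentially dimension counting.
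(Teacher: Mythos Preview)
The survey itself does not prove this result; it merely records that Ozsv\'ath--Szab\'o deduce the inequality as a corollary of their Theorem~1.3 in \cite{OS1}, which the paper describes as ``a calculation of certain Seiberg-Witten relations for negative self-intersection surfaces.'' So the comparison is between your sketch and the mechanism the paper points to.

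Your outline diverges from that mechanism in a substantive way. The Ozsv\'ath--Szab\'o argument does not proceed by a single Taubes-style localization that pins down a vortex degree $d$ and then reads off $0\le d\le g-1$ from a dimension count. Rather, Theorem~1.3 of \cite{OS1} establishes \emph{relations} among the Seiberg--Witten invariants of the Spin$^c$ structures $s+k\cdot\mathrm{PD}[\Sigma]$: if the adjunction inequality were violated for a basic class $s$, these relations would force an infinite string of further basic classes, contradicting finiteness. The adjunction inequality then drops out as a corollary. The neck-stretching that goes into proving those relations is along the circle bundle $\partial\nu(\Sigma)$, and the genuinely new analysis in \cite{OS1} concerns the Seiberg--Witten theory of that circle bundle and the complement when the Euler number is negative.

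The gap in your sketch is the sentence ``non-vanishing of $SW_M(s)$ then forces $0\le d\le g-1$.'' For $[\Sigma]\cdot[\Sigma]\ge 0$ this is the Kronheimer--Mrowka mechanism, but for $[\Sigma]\cdot[\Sigma]<0$ it is exactly the statement to be proved, and it does not follow from simple type plus zero-dimensionality of the moduli space. The orientation of the circle bundle is reversed relative to the non-negative case, the reducibles sit differently, and the naive vortex-count obstruction is not available; this is why \cite{OS1} had to prove the relation theorem instead. Your final paragraph acknowledges the analytic work as ``the main obstacle,'' but the obstacle is not merely executing a known localization package---it is that the package you invoke does not directly yield the constraint you claim in the negative regime. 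You would need either to reproduce the Ozsv\'ath--Szab\'o relations or to supply a genuinely different argument for why the limiting vortex degree is constrained as you assert.
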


As is generally the case in Seiberg-Witten theory, if $b^+=1$, then this result holds under consideration of the chamber structure; see Remark 1.10, \cite{OS1}.

Similar techniques are applied in \cite{OS2} to refine the adjunction inequality for non-negative self-intersection surfaces.

\begin{remark} In a series of papers, Lambert-Cole claimed a proof of the Thom conjecture \cite{Lam1} and the symplectic Thom conjecture \cite{Lam2} making no use of gauge theory or any pseudoholomorphic techniques.  Instead, the proof used trisections introduced by Gay and Kirby.

Furthermore, in \cite{Lam2} and \cite{Lam3}, a variety of adjunction inequalities were proven which hold without any restrictions on $A\cdot A$, the genus of the curve or $b^+$.  The following Theorem 1.2 in \cite{Lam2} is one such example: 

Let $(M,\omega)$ be a close, symplectic four-manifold with $[\omega]$ integral.  If $\Sigma$ is a smoothly embedded surface with $[\Sigma]\cdot [\omega]>0$, then
\[
-\chi(\Sigma)\ge [\Sigma]\cdot[\Sigma] - \langle [\Sigma], c_1(\omega)\rangle.
\]

Unfortunately, it was discovered in 2022 that his proof contains a flaw (\cite{Lam4}, \cite{Mro}).  
\end{remark}
%

\subsection{Symplectic Genus}

The Symplectic Thom Conjecture illustrates the role of symplectic surfaces in the minimal genus problem.  It thus seems natural that any class which satisfies the homological necessary restrictions to be represented by a symplectic surface should allow for some restrictions on the possible genus arising from the symplectic structures on $M$. 
The symplectic genus defined by B-H.Li - T-J.Li (Def. 3.1, \cite{LL2}) offers such a bound.  

Furthermore, Theorem \ref{t:sw} requires knowledge of Seiberg-Witten basic classes.  In the symplectic setting, Taubes showed (see Theorem 6, \cite{L}) that the symplectic canonical class $K_{\omega}$ is a SW-basic class for any symplectic form $\omega$.

For a class $A\in H_2(M,\mathbb Z)$ of a symplectic manifold $(M,\omega)$ to be represented by a symplectic surface, it must hold that $[\omega]\cdot A>0$.  In other words, there must exist $\alpha\in\mathcal C_M$ in the symplectic cone with $\alpha\cdot A>0$.  Let $\mathcal K$ denote the set of symplectic canonical classes and $\mathcal C_{M,K}\subset \mathcal C_M$, $K\in\mathcal K$, the set of classes representable by symplectic forms $\omega$ with $K_\omega=K$.  Consider the following set:
\[
\mathcal K_A=\{K\in\mathcal K\;|\;\exists \alpha\in\mathcal C_{M,K}:  \alpha\cdot A>0\}.
\]
For each $K\in \mathcal K_A$, define $\eta_K(A)=\frac{1}{2}(K\cdot A+A\cdot A)+1$.  

\begin{definition}  The {\it symplectic genus} $\eta(A)$ is defined to be $\eta(A)=\max\limits_{K\in\mathcal  K_A}\eta_K(A).$

\end{definition}

Note that there is no guarantee that $\eta(A)\ge 0$.  If $K\in\mathcal  K_A$ is
some symplectic canonical class such that $\eta(A)=\eta_K(A)$, we obtain the inequality
$
\tilde K\cdot A\le K\cdot A
$
for any $\tilde K\in\mathcal  K_A$.  Therefore, define the set \[
\mathcal K(A)=\{K\in\mathcal K_A\;|\;K\cdot A\ge \tilde K\cdot A\mbox{  for all }\tilde K\in\mathcal K_A\}
\]
and note that $\eta(A)=\eta_K(A)$ for any $K\in\mathcal K(A)$.  

\begin{lemma}[Lemma 3.2, \cite{LL2}] \label{l:sg} Let $M$ be a smooth, closed, oriented 4-manifold with non-empty symplectic cone.  Let $A\in H_2(M,\mathbb Z)$.  Then the following hold:

\begin{enumerate}
\item The symplectic genus $\eta(A)$ is no larger than the minimal genus of $A$.
\item The symplectic genus is invariant under the action of Diff(M). 
\end{enumerate}

\end{lemma}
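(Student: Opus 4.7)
For part (1), my plan is to apply an adjunction-type inequality to a connected embedded surface $\Sigma$ of minimal genus $g = g_m(A)$ representing $A$. Fix any $K \in \mathcal{K}_A$, so that there is a symplectic form $\omega$ with $K_\omega = K$ and $[\omega]\cdot A > 0$. Since $\eta_K(A)$ depends only on $K$ and $A$, I can perturb $\omega$ within its component of the symplectic cone to make $[\omega]$ rational, then rescale so that $[\omega]$ is integral, all while preserving $K_\omega = K$ and the positivity $[\omega]\cdot A > 0$ (an open condition). Applying Theorem \ref{t:lc}, and using $c_1(\omega) = -K_\omega$, gives
\[
2g - 2 \;=\; -\chi(\Sigma) \;\ge\; A \cdot A + K \cdot A,
\]
which rearranges to $g \ge \eta_K(A)$. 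Taking the maximum over $K \in \mathcal{K}_A$ yields $g_m(A) \ge \eta(A)$.

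For part (2), the plan is to exploit the $\Diff(M)$-equivariance of all ingredients entering the definition of $\eta$. For any orientation-preserving $\phi \in \Diff(M)$ and symplectic form $\omega$, the pullback $(\phi^{-1})^*\omega$ is symplectic with canonical class $(\phi^{-1})^*K_\omega$. Hence $K \mapsto (\phi^{-1})^*K$ restricts to a bijection $\mathcal{K}_A \to \mathcal{K}_{\phi_*A}$: if $\alpha \in \mathcal{C}_{M,K}$ satisfies $\alpha \cdot A > 0$, then $(\phi^{-1})^*\alpha \in \mathcal{C}_{M,(\phi^{-1})^*K}$ satisfies $(\phi^{-1})^*\alpha \cdot \phi_*A = \alpha\cdot A > 0$. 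Because $\phi^*$ preserves the intersection pairing,
\[
\eta_{(\phi^{-1})^*K}(\phi_*A) \;=\; \tfrac{1}{2}\bigl((\phi^{-1})^*K \cdot \phi_*A + \phi_*A \cdot \phi_*A\bigr) + 1 \;=\; \eta_K(A),
\]
and taking the maximum over $K$ gives $\eta(\phi_*A) = \eta(A)$.

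The main technical obstacle I anticipate concerns justifying the adjunction inequality uniformly in part (1). Theorem \ref{t:lc} requires $[\omega]$ integral, forcing the perturbation-and-rescaling step above; an alternative route uses Taubes' theorem that $K_\omega$ is always a Seiberg-Witten basic class and invokes Theorem \ref{t:sw} (in the $b^+>1$, $A \cdot A < 0$ case) combined with its non-negative self-intersection counterpart, with chamber considerations when $b^+ = 1$ — the positivity $[\omega]\cdot A > 0$ then selects the appropriate chamber. Either route delivers $2g - 2 \ge A\cdot A + K \cdot A$, after which the lemma reduces to algebra.
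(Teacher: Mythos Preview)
Your argument is correct. Note that the present paper is a survey and does not supply its own proof of this lemma --- it simply quotes the result from \cite{LL2}. So there is no ``paper's proof'' to compare against directly; what one can compare is your route with the one available to Li--Li in 2002.

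Your primary route via Theorem~\ref{t:lc} is clean and valid within the framework of this survey: Lambert-Cole's inequality applies with no hypotheses on $b^+$, the sign of $A\cdot A$, or the genus, so after the (legitimate) perturbation to an integral symplectic class you get $2g-2 \ge A\cdot A + K\cdot A$ in one stroke, and part~(1) follows. The perturbation step is fine because the symplectic cone is open in $H^2(M;\mathbb{R})$ and a $C^0$-small change of $\omega$ does not alter the homotopy class of compatible almost complex structures, hence preserves $K_\omega$. Your part~(2) is the standard naturality argument and is correct as written.

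The difference worth flagging is historical rather than mathematical: \cite{LL2} predates Lambert-Cole by nearly two decades, so the original proof necessarily went through Seiberg--Witten adjunction inequalities --- essentially your ``alternative route''. That route does require case splitting (non-negative versus negative square, $b^+=1$ chamber analysis versus $b^+>1$), and one must also handle the genus-zero and simple-type hypotheses in Theorem~\ref{t:sw} separately; you gloss over these in your sketch, so if you ever need to make that route rigorous you would have more bookkeeping to do. Your Lambert-Cole route sidesteps all of that, which is exactly what it buys you.
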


Notice that the first condition ensures that the symplectic genus is well-defined and that it also provides an obstruction to the existence of a smooth / symplectic surface.  It also implies the genus inequality $-\chi(\Sigma)\ge A\cdot A + K\cdot A$
for any class $K\in \mathcal K(A)$.

Moreover, if $A$ is represented by a connected symplectic surface, then the minimal genus and the symplectic genus coincide.

\begin{theorem}[Theorem A, \cite{LL2}] \label{t:symp} Let $M$ be a smooth, closed, oriented four-manifold with a non-empty symplectic cone and $b^+=1$.  Then the symplectic genus of any class of positive square is non-negative and it coincides with the minimal genus for any sufficiently large multiple of such a class.

\end{theorem}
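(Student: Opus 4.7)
The statement has two parts: non-negativity of $\eta(A)$ whenever $A\cdot A>0$, and the equality $\eta(nA)=g_m(nA)$ for all sufficiently large positive integers $n$. The inequality $\eta(nA)\le g_m(nA)$ is free from Lemma \ref{l:sg}(1), so only the reverse direction requires proof in the second part.

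For the non-negativity claim, the plan is first to exhibit a symplectic canonical class $K\in\mathcal K_A$. Since $A\cdot A>0$, the class $A$ lies in the forward positive cone of $H^2(M;\mathbb R)$; by the $b^+=1$ description of the symplectic cone (a main result of Li-Liu), this cone meets $\mathcal C_{M,K}$ for some $K\in\mathcal K$, producing some $\alpha$ with $\alpha\cdot A>0$ (after possibly reversing an orientation convention). For the bound $\eta_K(A)\ge 0$ on the maximizer $K\in\mathcal K(A)$, I would appeal to the wall-crossing / chamber-refined adjunction inequality for $b^+=1$ manifolds (the Li-Liu extension of Theorem \ref{t:sw}): applied in the chamber determined by $A$, it constrains the possible values of $K\cdot A$, and combined with the Li-Liu classification of minimal $b^+=1$ symplectic manifolds forces $K\cdot A+A\cdot A\ge -2$.

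For the equality, fix a maximizer $K^*\in\mathcal K(A)=\mathcal K(nA)$ and a corresponding symplectic form $\omega^*\in\mathcal C_{M,K^*}$. The plan is to produce, for $n$ large, a connected embedded $\omega^*$-symplectic surface $\Sigma_n$ in class $nA$. The construction: first perturb $\omega^*$ (using openness of $\mathcal C_{M,K^*}$) so that $[\omega^*]$ lies near the ray $\mathbb R_{>0}\cdot A$; then apply Donaldson's asymptotically holomorphic section theorem to obtain a symplectic submanifold Poincar\'e dual to a large integer multiple of $[\omega^*]$; arithmetic selection of $n$ and small adjustment of $\omega^*$ match the divisor class exactly to $nA$, while Gompf-style smoothing of intersections gives connectedness. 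Given such $\Sigma_n$, the adjunction equality for symplectic surfaces yields $g(\Sigma_n)=\eta_{K^*}(nA)=\eta(nA)$; the Symplectic Thom Conjecture then shows $\Sigma_n$ is genus-minimizing, so $g_m(nA)=\eta(nA)$, matching the upper bound from Lemma \ref{l:sg}(1).

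The hardest step will be the construction of $\Sigma_n$ in the precise class $nA$: since $A$ need not itself be a symplectic class, one must leverage openness of $\mathcal C_{M,K^*}$ in the positive cone together with Donaldson's machinery while reconciling integrality constraints as $n\to\infty$.
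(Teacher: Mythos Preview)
This survey does not itself prove Theorem~\ref{t:symp}; the result is quoted from \cite{LL2}, so any comparison is to the argument there rather than to anything in the present paper.

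Your Donaldson-based construction for the second claim has a real gap. Donaldson's theorem produces symplectic hypersurfaces Poincar\'e dual to $k[\omega]$ for large $k$; to hit the class $nA$ you therefore need a symplectic form $\omega^*$ with $[\omega^*]$ on the ray $\mathbb R_{>0}\cdot A$. But by the Li--Liu description of the $b^+=1$ symplectic cone, $\mathcal C_M$ is the positive cone with the hyperplanes $E^\perp$ deleted, one for each exceptional sphere class $E$. If $A\cdot E=0$ for some such $E$ then no positive multiple of $A$ is a symplectic class, and your ``small adjustment of $\omega^*$'' cannot repair this: perturbing $[\omega^*]$ off the ray through $A$ lands Donaldson's divisor in some $k[\omega^*]\neq nA$, and there is no arithmetic that recovers $nA$ exactly. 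A concrete instance is $A=2H-E_1-E_2$ in $\mathbb{CP}^2\#2\overline{\mathbb{CP}^2}$, where $A^2=2>0$ yet $A\cdot(H-E_1-E_2)=0$, so $A\notin\mathcal C_M$.

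The argument in \cite{LL2} sidesteps this by producing the symplectic surface via Gromov--Witten theory rather than via Donaldson: Taubes' SW$=$Gr together with the $b^+=1$ wall-crossing formula forces the relevant Gromov invariant in class $nA$ to be nonzero once the expected dimension $n^2A\cdot A-nK\cdot A$ is non-negative (automatic for $n$ large since $A\cdot A>0$), yielding a $J$-holomorphic and hence symplectic representative without ever requiring $A$ itself to lie in $\mathcal C_M$. This is exactly the mechanism behind Theorems~\ref{t:rat} and~\ref{t:irrat} later in the survey. Your non-negativity argument is also underspecified: the adjunction inequality you invoke is a statement about an embedded surface, which you do not yet possess in class $A$, so ``adjunction in the chamber determined by $A$'' is not something you can apply as written; in \cite{LL2} this step instead passes through the structure of SW basic classes and the action on $\mathcal K$.
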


Furthermore, it is shown that the bound given by the symplectic genus is sharp in a broad class of manifolds, in particular for $M=\mathbb CP^2\#k\overline{\mathbb CP^2}$ and $k\le 9$ and $S^2\times S^2$ (see Section 5, \cite{L}; Theorems \ref{t:rat} and \ref{t:irrat}, Lemma \ref{l:sh1}).

If the classes are allowed to have negative square, then it can be shown \cite{D} that there exist examples which have negative symplectic genus.  Hence, those classes cannot be represented by symplectic surfaces and the symplectic genus also does not appear to give any useful bound on the minimal genus.

\subsection{Results for manifolds with $b^+=1$}\label{s:b+1}

The symplectic genus is defined for any symplectic four-manifold.  However, the strongest results are obtained for manifolds with $b^+=1$; see Theorem \ref{t:symp}.  It is thus natural to study results in this context.

\subsubsection{Strle's Results}

Strle obtained the following adjunction-type inequality using Seiberg-Witten theory for manifolds with cylindrical ends:

\begin{theorem}[Theorem A, \cite{S}] \label{t:st} Let $M$ be a smooth, closed, oriented four-manifold with $b_1=0$ and $b^+=1$.  If $\Sigma$ is a smoothly embedded surface of positive self-intersection, then
\[
-\chi(\Sigma)\ge [\Sigma]\cdot[\Sigma] -|\langle [\Sigma], c\rangle|
\]
for any characteristic class $c\in H_2(M,\mathbb Z)$ which satisfies $c\cdot c >\sigma(M)$. 
\end{theorem}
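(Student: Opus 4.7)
The strategy is Seiberg-Witten (SW) theory, exploiting the $b^+ = 1$ chamber structure together with a neck-stretching argument along $\Sigma$. Fix the $\text{Spin}^c$ structure $\mathfrak{s}$ with $c_1(\mathfrak{s}) = c$. Because $c$ is characteristic, Rokhlin's congruence gives $c^2 \equiv \sigma(M) \pmod 8$, so the hypothesis $c^2 > \sigma(M)$ upgrades to $c^2 \geq \sigma(M) + 8$. Plugging into the dimension formula $d(\mathfrak{s}) = (c^2 - 2\chi(M) - 3\sigma(M))/4$ and using $b_1 = 0$, $b^+ = 1$ (so $\chi + \sigma = 4$), one checks that $d(\mathfrak{s})$ is a non-negative even integer. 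The wall-crossing formula for $b^+ = 1, b_1 = 0$ then guarantees that $SW^+(\mathfrak{s}) - SW^-(\mathfrak{s})$ is nonzero across the wall $\{c \cdot \omega = 0\}$ in the positive cone, so the SW invariant is non-vanishing in at least one of the two chambers.

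Next, I would stretch a long collar neighborhood of $\Sigma$ along the unit normal circle bundle $Y = \partial \nu(\Sigma)$. Since $[\Sigma]\cdot[\Sigma] > 0$, the self-dual harmonic period $\omega_T$ on the stretched manifold asymptotes, up to scale, to the direction of $[\Sigma]$, so asymptotically the chamber is selected by $\mathrm{sgn}(c\cdot[\Sigma])$. In the favourable case that the nonvanishing SW invariant lies in this asymptotic chamber, the standard cylindrical-end adjunction (originating with Kronheimer--Mrowka and adapted to the $b^+ = 1$ setting via Li--Liu and related works) produces an irreducible solution whose asymptotic behaviour on $Y$ yields the stronger bound $-\chi(\Sigma) \geq [\Sigma]\cdot[\Sigma] + |c \cdot [\Sigma]|$, which immediately implies the claim.

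The principal difficulty is the opposite case, where the non-vanishing chamber is the one opposite to $[\Sigma]$ in the positive cone; here neck-stretching appears to lose all solutions as $\omega_T$ crosses the wall $c \cdot \omega = 0$. The remedy, which I expect to be the heart of Strle's argument, is a controlled wall-crossing compactification: one extracts a reducible SW configuration that still suffices to constrain the genus of $\Sigma$, at the price of a sign loss in the adjunction inequality. Running this analysis carefully yields the one-sided bound $-\chi(\Sigma) \geq [\Sigma]\cdot[\Sigma] - |c \cdot [\Sigma]|$, as claimed. The main technical obstacle is carrying out this reducible-dominated, cylindrical-end SW analysis rigorously in the $b^+=1$ regime, and in particular quantifying precisely how the adjunction bound degrades from $+|c\cdot[\Sigma]|$ in the ``good'' chamber to $-|c\cdot[\Sigma]|$ when the nonvanishing chamber is the wrong one.
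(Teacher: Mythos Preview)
The paper is a survey and does not prove this theorem; it cites it from Strle and describes the method only as ``Seiberg-Witten theory for manifolds with cylindrical ends,'' later contrasting this briefly with Konno's wall-crossing approach and Dai--Ho--Li's use of two $\text{Spin}^c$ structures. There is thus no in-paper proof to compare against beyond that one-line description.

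Your outline has a genuine gap, which you yourself flag: the ``unfavourable chamber'' case is left as speculation (``the remedy, which I expect to be the heart of Strle's argument\ldots''). That is precisely where the content lies, and hoping that a reducible extracted from some wall-crossing compactification will constrain the genus is not an argument. Your framing also differs from Strle's. The phrase ``cylindrical ends'' signals that he works directly on the open complement $M_0 = M \setminus \nu(\Sigma)$ with an infinite cylinder attached, rather than on the closed $M$ via wall-crossing followed by a stretching limit. Since $[\Sigma]^2 > 0$ and $b^+(M)=1$, the surface carries the whole positive part of the intersection form, so $M_0$ is negative definite; the analysis of reducibles and their obstruction bundles on this negative-definite cylindrical-end piece is what yields the bound, and no good/bad chamber split arises in that setup. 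If you prefer to stay on the closed manifold, the paper's remark about Dai--Ho--Li points to a cleaner fix than the one you speculate about: consider both $\mathfrak{s}$ and its conjugate $\bar{\mathfrak{s}}$, so that whichever chamber the period point lands in after stretching, one of the two structures has nonvanishing invariant there and the one-sided adjunction for that structure gives $-\chi(\Sigma) \ge [\Sigma]^2 - |c\cdot[\Sigma]|$ directly.
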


%

The bounds obtained by Strle depend only on the rational homology type of the manifold.  In particular, the bounds can be shown to be sharp in $\mathbb CP^2\#k\overline{\mathbb CP^2}$ and $S^2\times S^2$, but examples are known where the lower bound is not realized; see the Remarks following Theorem 10.1, \cite{S}.

\subsubsection{Cohomological Genus}

Dai-Ho-Li defined the cohomological genus of a class $A$ in purely cohomological terms and obtained lower bounds on the minimal genus of a surface $\Sigma$ representing $A$ purely in terms of the cohomology ring of the 4-manifold.  This generalizes Strle's result.

For the purpose of this survey, we describe only the setup necessary to describe the result in the context of four-manifolds; algebraic details can be found in \cite{DHL}.  

Let $M$ be a smooth, closed, connected, oriented four-manifold with $b^+=1$.  Consider the skew-symmetric bilinear form
\begin{equation}\label{e:T}
T:H^1(M,\mathbb Z)\times H^1(M,\mathbb Z)\rightarrow H^2(M,\mathbb Z),
\end{equation}
and denote that $\tilde b_1=$ rank of $T$.  Define the modified Euler number $\tilde \chi(M)=2+b_2(M)-2\tilde b_1(M)$.  A class $c\in H_2(M,\mathbb Z)$ is called an {\it adjunction class} if it is characteristic and if one of the following holds:
\begin{enumerate}
\item $c\cdot c>\sigma(M)$;
\item $c\cdot c\ge 2\tilde \chi(M)+3\sigma(M)$ and $c$ pairs non-trivially with Im $T$ when $T$ is non-trivial.
\end{enumerate}

\begin{definition}
Let $A\in H_2(M,\mathbb Z)$.  For any adjunction class $c$, define
\[
h_c(A)=\left\{\begin{array}{ll} 1+\frac{A\cdot A-|c\cdot A|}{2}& \mbox{if  }A\ne 0\\0& \mbox{if  }A=0.\end{array}\right.
\]
Define the cohomological genus $h(A)$ of $A$ by $h(A)=\max_c h_c(A)$.
\end{definition}

Dai-Ho-Li then obtained a result analogous to Lemma \ref{l:sg}, which generalizes Strle's Theorem \ref{t:st}'

\begin{theorem}[Theorem 1.4, \cite{DHL}] \label{t:cg} Let $M$ be a smooth, closed, connected, oriented four-manifold with $b^+=1$.  Then, for any $A\in H_2(M,Z)$ with $A\cdot A\ge 0$, the following hold:

\begin{enumerate}
\item The cohomological genus $h(A)$ is no larger than the minimal genus of $A$.
\item The cohomological genus $h(A)$ is invariant under the action of Diff(M). 
\end{enumerate}

\end{theorem}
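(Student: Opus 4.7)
The plan is to prove (1) first; (2) will follow formally. Part (1) amounts to the adjunction-type inequality
\[
-\chi(\Sigma)\ge A\cdot A-|c\cdot A|
\]
for every connected embedded surface $\Sigma$ representing $A\ne 0$ and every adjunction class $c$ (the case $A=0$ is immediate from the definition of $h_c$). Rearranging via $-\chi(\Sigma)=2g(\Sigma)-2$ yields $g(\Sigma)\ge h_c(A)$, and maximizing over $c$ gives $g_m(A)\ge h(A)$. For type (1) adjunction classes, namely those with $c\cdot c>\sigma(M)$, this is exactly Strle's Theorem \ref{t:st}. The novelty lies in type (2): using $\tilde\chi(M)-\chi(M)=2(b_1-\tilde b_1)\ge 0$ and the fact that characteristic classes satisfy $c\cdot c\equiv\sigma(M)\pmod{8}$, a short check shows that the type (2) bound $c\cdot c\ge 2\tilde\chi(M)+3\sigma(M)=\sigma(M)+8-4\tilde b_1$ captures classes not covered by Strle precisely when $\tilde b_1\ge 2$, and in that regime the hypothesis $\langle c,\mathrm{Im}\,T\rangle\ne 0$ becomes essential.

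For the type (2) case I would refine Strle's wall-crossing Seiberg--Witten argument by incorporating the cup-product contribution coming from $H^1$. The expected dimension of the $\mathrm{Spin}^c$ moduli space for a characteristic class $c$ is $d(c)=(c\cdot c-2\chi(M)-3\sigma(M))/4$, and when $b_1>0$ there is a $2b_1$-dimensional family of reducibles parametrized by flat line bundles. The rank $\tilde b_1$ of the skew form $T$ measures the extent to which these reducibles are obstructed by the cup-product pairing, so on the level of the effective moduli dimension one expects $\chi$ to be replaced by $\tilde\chi$; this is the numerical source of the type (2) hypothesis. The condition $\langle c,\mathrm{Im}\,T\rangle\ne 0$ should then guarantee a nontrivial contribution at this lowered dimension, against which the standard Seiberg--Witten bound on the genus of $\Sigma$ can be run to yield the adjunction inequality.

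Part (2) is essentially formal: any $\phi\in\mathrm{Diff}(M)$ acts on $H^*(M,\mathbb{Z})$ preserving the cup-product ring, the intersection pairing, the signature, and $w_2(M)$. Hence $\phi^*$ preserves the rank of $T$, the numbers $\tilde b_1$ and $\tilde\chi(M)$, the set of characteristic classes, and therefore the set of adjunction classes. Since $h_{\phi^*c}(\phi^*A)=h_c(A)$ for each such $c$, maximizing gives $h(\phi^*A)=h(A)$. The main obstacle is the rigorous execution of the Seiberg--Witten step in the type (2) range: the dimension heuristic above must be replaced by a careful analysis of the moduli space in the presence of nontrivial $H^1$, including the chamber structure forced by $b^+=1$, with the cup-product data built in either through a perturbation adapted to $\mathrm{Im}\,T$ or by cutting down the reducibles via cohomological obstructions. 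Pinning down the precise role of the hypothesis $\langle c,\mathrm{Im}\,T\rangle\ne 0$ in ruling out reducibles and controlling the wall-crossing term is where I expect the subtlest work to lie.
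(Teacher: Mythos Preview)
The paper you are working in is a survey; it does not prove Theorem~\ref{t:cg} but merely quotes it from \cite{DHL}. The only methodological hint it gives is the sentence following Theorem~\ref{t:st2}: ``in Dai--Ho--Li the surface is fixed and the two spin$^c$ structures are considered.'' So there is no proof in this paper to compare your proposal against, only that one-line indication that the argument runs by comparing two spin$^c$ structures (essentially $c$ and $-c$) via wall-crossing with the surface held fixed. Your outline is broadly consistent with that hint, and your treatment of part~(2) is correct and complete.

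That said, your reduction of the type~(1) case to Strle's Theorem~\ref{t:st} is too quick. Strle's statement, as recorded here, carries the hypotheses $b_1=0$ and $[\Sigma]\cdot[\Sigma]>0$, whereas Theorem~\ref{t:cg} imposes neither: it allows arbitrary $b_1$ and $A\cdot A\ge 0$. So even for type~(1) adjunction classes there is genuine content beyond Strle --- one must already handle the $b_1>0$ Seiberg--Witten wall-crossing (reducibles now come in a torus of flat connections) and the square-zero case. This is not merely cosmetic: the same $H^1$ analysis you defer to the type~(2) case is already needed here. Your proposal therefore has a single real gap, which you yourself flag: the rigorous Seiberg--Witten computation in the presence of nontrivial $H^1$, where the rank $\tilde b_1$ of $T$ and the pairing condition $\langle c,\mathrm{Im}\,T\rangle\ne 0$ must be shown to control the wall-crossing contribution. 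Until that step is executed, what you have is a plausible strategy rather than a proof.
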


This again implies the genus inequality $
-\chi(\Sigma)\ge A\cdot A  - |c\cdot A|
$ 
for any adjunction class $c$.  Note that this does not involve any classes related to Seiberg-Witten invariants.

\begin{theorem}[Theorem 1.5, \cite{DHL}]  Let $M$ be a smooth, closed, connected, oriented four-manifold with $b^+=1$ and assume that  $2\tilde \chi(M)+3\sigma(M)\ge 0$.  Then $h(A)\ge 0$ for any $A\cdot A>0$ or $A\cdot A=0$ and $A$ is primitive.

\end{theorem}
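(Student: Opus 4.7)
The goal is to construct, for each admissible $A$, an adjunction class $c$ with $h_c(A) \ge 0$; equivalently, a characteristic class $c$ satisfying the square bound $c \cdot c \ge 2\tilde\chi(M) + 3\sigma(M)$ (together with the $\text{Im}(T)$-pairing condition when $T \ne 0$) and the pairing bound $|c \cdot A| \le A \cdot A + 2$. Fix any characteristic class $w \in H^2(M,\mathbb Z)$; every other characteristic class is $c = w + 2v$ for a unique $v \in H^2(M,\mathbb Z)$, reducing the task to solving $|w \cdot A + 2 v \cdot A| \le A \cdot A + 2$ and $w \cdot w + 4(w \cdot v + v \cdot v) \ge 2\tilde\chi(M) + 3\sigma(M)$ simultaneously in integer $v$.

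The argument splits by the sign of $A \cdot A$. In the case $A \cdot A = 0$ with $A$ primitive, the map $v \mapsto v \cdot A$ is surjective onto $\mathbb Z$, so $v$ may be chosen so that $c \cdot A = \pm 2$ (the parity is forced even because $A \cdot A$ is). The decisive observation is then that further shifts $v \mapsto v + k A$ leave $c \cdot A$ unchanged (as $A \cdot A = 0$) while translating $c \cdot c$ by $8k$; this is consistent with the mod-$8$ relation $c \cdot c \equiv \sigma(M) \pmod{8}$, and choosing $k$ sufficiently positive yields $c \cdot c \ge 2\tilde\chi(M) + 3\sigma(M)$.

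In the case $A \cdot A > 0$, the Hodge index inequality for the $b^+ = 1$ intersection form gives the universal bound $(c \cdot A)^2 \ge (c \cdot c)(A \cdot A)$ for every $c$, so the constraint $|c \cdot A| \le A \cdot A + 2$ caps the reachable $c \cdot c$ at $(A \cdot A + 2)^2/(A \cdot A)$. Rewriting the target as $2\tilde\chi(M) + 3\sigma(M) = 9 - b^-(M) - 4 \tilde b_1(M)$ using $b^+ = 1$, an elementary check shows this cap exceeds the target for every integer $A \cdot A \ge 1$ except possibly $A \cdot A \in \{2, 3\}$ when $b^- = \tilde b_1 = 0$; that exceptional regime forces $b_2 = 1$, so $H^2(M, \mathbb Z) \cong \mathbb Z$ is generated by a square-one class and the values $A \cdot A = 2, 3$ simply do not occur. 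An explicit integer $c$ realizing the bound is then produced by pinning $c \cdot A$ near its optimal value via primitivity of the primitive core of $A$, and then shifting within $A^\perp$ and by multiples of $A$ to land on a valid $c \cdot c$ compatible with the mod-$8$ congruence. The $\text{Im}(T)$-pairing condition, when active, is an open non-vanishing requirement arranged by a small further perturbation of $v$ that preserves both primary inequalities.

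The principal difficulty is the trade-off expressed by the Hodge index inequality: both $|c \cdot A|$ and $c \cdot c$ are controlled by the same integer vector $v$, and any attempt to grow $c \cdot c$ forces $|c \cdot A|$ to grow in lockstep. The hypothesis $2\tilde\chi(M) + 3\sigma(M) \ge 0$ is precisely the numerical calibration ensuring that the envelope set by this trade-off clears the required threshold for all admissible $A$.
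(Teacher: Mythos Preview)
This survey does not contain a proof of the stated theorem; it merely quotes the result from \cite{DHL}. So there is no ``paper's proof'' to compare your attempt to. That said, your proposal can be assessed on its own.

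Your treatment of the case $A\cdot A=0$ with $A$ primitive is correct: arranging $c\cdot A=\pm 2$ by primitivity and then shifting $c\mapsto c+2kA$ moves $c\cdot c$ in steps of $8$ without disturbing $c\cdot A$, so one can force $c\cdot c>\sigma(M)$ and obtain an adjunction class with $h_c(A)=0$.

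The case $A\cdot A>0$, however, has a genuine gap. Your Hodge--index computation $(c\cdot A)^2\ge (c\cdot c)(A\cdot A)$ is only an \emph{obstruction}: it shows that the target $c\cdot c\ge 2\tilde\chi(M)+3\sigma(M)$ is not ruled out by the constraint $|c\cdot A|\le A\cdot A+2$, but it does not produce an integral characteristic $c$ realizing both bounds. Your sketched construction (``shifting within $A^\perp$ and by multiples of $A$'') does not work as stated. Since $A\cdot A>0$ and $b^+=1$, the lattice $A^\perp$ is negative definite, so any shift of $c$ within $2A^\perp$ can only \emph{decrease} $c\cdot c$; and a shift $c\mapsto c+2kA$ changes $c\cdot A$ by $2k(A\cdot A)\ne 0$, destroying the pairing bound. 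There is therefore no free direction in which to raise $c\cdot c$ while holding $c\cdot A$ fixed. What is missing is an argument that among characteristic classes with the prescribed value of $c\cdot A$, one can be chosen whose $A^\perp$--component has sufficiently small (negative-definite) norm; this is a nontrivial lattice statement you have not addressed.

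The handling of the $\mathrm{Im}\,T$ pairing condition is also unjustified: you call it ``an open non-vanishing requirement arranged by a small further perturbation,'' but over $\mathbb{Z}$ there are no small perturbations, and any integral change in $v$ will in general disturb the two inequalities you have already arranged.
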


It was also shown that the cohomological genus, under the condition $2\tilde \chi(M)+3\sigma(M)\ge 0$, offers a sharp bound for the minimal genus when $M$ is the connected sum of a manifold $Y$ of Kodaira dimension $-\infty$ or $0$ with an appropriate number of copies of $S^1\times S^3$.  The list of possible $Y$ is given explicitly in Theorem 1.5, \cite{DHL}.

For when $2\tilde \chi(M)+3\sigma(M)< 0$, Dai-Ho \cite{DH} proved a partial result with regard to the sharpness of the bound under certain cohomological restrictions on $A$.  These restrictions enabled them to use the symplectic genus to obtain a bound for the comhomological genus.

\subsection{Estimates for Configurations of Surfaces}

Theorem \ref{t:st} is only valid for $b^+=1$, but the techniques used to prove it are readily able to prove a more general result for configurations of $n$ surfaces in manifolds with $b^+=n$.

\begin{theorem}[Theorem B, \cite{S}; Corollary 2.15, \cite{K}; \cite{DHL}]\label{t:st2}
Let $M$ be a smooth, closed, connected four-manifold with $b_1=0$ and $b^+=n>1$. Let $\Sigma_1,...,\Sigma_n$ be disjoint embedded surfaces in $M$ with positive self-intersections.  If $c\in H_2(M,\mathbb Z)$ is a characteristic class satisfying 
\[
c\cdot c>\sigma(M)\mbox{  and  }c\cdot [\Sigma_i]\ge 0\mbox{  for all }i,
\]
 then 
\[
-\chi(\Sigma_i)\ge [\Sigma_i]\cdot[\Sigma_i] -\langle [\Sigma_i], c\rangle
\]
holds for at least one $i$.

\end{theorem}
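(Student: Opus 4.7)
The plan is to adapt Strle's cylindrical-end Seiberg--Witten proof of Theorem \ref{t:st} to the multi-surface setting, exploiting the fact that $n$ disjoint positive-self-intersection surfaces span the entire maximal positive subspace of $H^2(M;\mathbb R)$.

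First observe that, since the $\Sigma_i$ are disjoint and each has positive self-intersection, $[\Sigma_i]\cdot[\Sigma_j] = [\Sigma_i]^2\,\delta_{ij}$. Hence $V := \mathrm{span}_{\mathbb R}\{[\Sigma_i]\} \subset H^2(M;\mathbb R)$ is $n$-dimensional and positive-definite. Since $b^+(M) = n$, the subspace $V$ is a maximal positive subspace, and one can choose a Riemannian metric $g$ on $M$ whose space of self-dual harmonic $2$-forms is Poincar\'e dual to $V$. The hypothesis $c\cdot[\Sigma_i] \geq 0$ then places $c^+$ in the closed cone spanned by the $[\Sigma_i]$ with non-negative coefficients, which specifies a chamber in the sense of Seiberg--Witten wall-crossing.

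Next, suppose for contradiction that
$$-\chi(\Sigma_i) < [\Sigma_i]\cdot[\Sigma_i] - c\cdot[\Sigma_i] \quad \text{for every } i,$$
and consider the Spin$^c$ structure $\mathfrak s$ with $c_1(\mathfrak s) = c$. Neck-stretching simultaneously along the boundaries of tubular neighborhoods $N_i = \nu(\Sigma_i)$ splits $M$ into $n$ disk-bundle pieces $N_i$ and an exterior $M_0 = M \setminus \sqcup_i \mathrm{int}(N_i)$, glued along copies of $\Sigma_i \times S^1$. Disjointness of the $\Sigma_i$ ensures the degenerations occur in non-interacting regions and can be performed simultaneously. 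On each $N_i$ the SW equations reduce, in the cylindrical limit, to a vortex-type equation over $\Sigma_i$; the standard analysis (as in Strle) shows that the assumed strict violation of the adjunction inequality rules out irreducible solutions whose Spin$^c$ data restricts to $c|_{N_i}$, making the corresponding relative invariant vanish.

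On $M_0$, the condition $c\cdot c > \sigma(M)$ combined with the chamber chosen above forces a non-zero wall-crossing contribution to the SW count, since the reducible locus lies on the correct side of the walls cut out by $c$ in the $n$-dimensional period domain. Gluing the non-zero count from $M_0$ against the vanishing cap invariants produces the desired contradiction, so at least one of the inequalities must hold. The main technical obstacle is precisely this simultaneous neck-stretching and multi-piece gluing: in the $b^+=1$ case of Theorem \ref{t:st} a single surface exhausts $H^+$, but here one must check that the $n$ degenerations are mutually compatible, identify the correct chamber in an $n$-dimensional positive cone, and track the wall-crossing invariant coherently through the gluing of $n+1$ pieces with long necks.
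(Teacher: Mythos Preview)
The paper is a survey and does not itself prove this theorem; it simply cites Strle, Konno, and Dai--Ho--Li and remarks that all three use Seiberg--Witten theory, with Strle working on cylindrical-end moduli spaces, Konno using wall-crossing formulas, and Dai--Ho--Li varying the Spin$^c$ structure. So there is no in-paper proof to compare against line by line, only those external arguments.

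Your outline is closest in spirit to Strle's approach, and the opening observation is exactly right: because the $[\Sigma_i]$ are pairwise orthogonal with positive squares and $b^+=n$, they span all of $H^+$, so after removing tubular neighbourhoods the complement $M_0$ has $b^+_{L^2}(M_0)=0$. This is the real engine of the argument and you should say it explicitly, because it is what forces an unavoidable reducible solution on $M_0$ for every Spin$^c$ structure. The condition $c\cdot c>\sigma(M)$ then makes the formal dimension of the cylindrical-end moduli space positive, and the link of the reducible in that moduli space produces a nontrivial class; if every adjunction inequality were violated, the asymptotic limits over the ends would be constrained so that this moduli space is compact, yielding the contradiction.

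Where your write-up goes astray is in the language of ``chambers'' and ``wall-crossing contribution to the SW count.'' For a closed $M$ with $b^+>1$ there is no chamber structure and no wall-crossing in the ordinary sense, so phrases like ``the reducible locus lies on the correct side of the walls cut out by $c$ in the $n$-dimensional period domain'' do not describe an actual mechanism. (Konno does invoke a wall-crossing formula, but for a \emph{family} of metrics/perturbations, which is a different object from what you sketch.) In Strle's argument the hypothesis $c\cdot[\Sigma_i]\ge 0$ is not a chamber condition; it pins down which asymptotic value the connection approaches on the $i$-th end and thereby determines the sign with which $c\cdot[\Sigma_i]$ enters the index formula on $M_0$. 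Likewise, the failure of adjunction on $N_i$ does not ``rule out irreducible solutions'' on the cap; rather, it bounds the possible limiting Spin$^c$ structures on $\Sigma_i\times S^1$ and hence guarantees compactness of the moduli space on $M_0$. If you reframe the middle two paragraphs around the reducible on $M_0$ (existence from $b^+_{L^2}=0$, positive index from $c^2>\sigma$, compactness from the assumed genus violations) rather than around wall-crossing, you will have an honest sketch of Strle's proof.
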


Konno (Theorem 2.11, \cite{K}) proved an analogous result for classes with vanishing self-intersection.  However, that result is only valid under a homological restriction (Condition 1, \cite{K}).  On the other hand, Konno was able to remove the condition that $b_1=0$.

In fact, the results of Strle, Konno and Dai-Ho-Li are very similar, although the techniques employed vary slightly.  All employ Seiberg-Witten invariants:  in Strle's case for manifolds with cylindrical ends, in Konno the wall-crossing formulas are employed for a fixed spin c structure while two surfaces are considered, and in Dai-Ho-Li the surface is fixed and the two spin c structures are considered.

\subsection{4-Manifolds admitting a Circle Action}

The study of the minimal genus problem for 4-manifolds admitting a circle action involves study of the corresponding problem for 3-manifolds.  The complexity and Thurston norm defined for 3-manifolds below is very similar to the configuration results described in the previous section.  For a survey of the minimal genus problem in 3-manifolds, see Kitayama \cite{Ki} and Wu \cite{W}.

\begin{definition}Let $\Sigma$ be a compact surface with connected components $\Sigma_1,...,\Sigma_n$.  Then, 
\begin{enumerate}
\item  define the {\it complexity of $\Sigma$} to
$
\chi_-(\Sigma)=\sum\limits_{i=1}^n \max(0,-\chi(\Sigma_i));
$
\item let $A\in H_2(M^k,\mathbb Z)$ ($\dim M^k=k $) and define
\[
x_k(A)=\min\{\chi_-(\Sigma)\;|\;\Sigma\subset M\mbox{ an embedded surface representing }A\}.
\]

\end{enumerate}
\end{definition} 

Given a 3-manifold $N$, the quantity $x_3(A)$ is known as the {\it Thurston norm}, although it only defines a seminorm on $H^1(N,\mathbb Z)\equiv H_2(N,\mathbb Z)$, which can however be extended uniquely to $H^1(N,\mathbb R)$.  This can be viewed as a generalization of the knot genus; see Kitayama \cite{Ki} for details.

Observe that the complexity may involve a configuration of surfaces, thus any bound on these quantities is related to the results of Strle and Konno, though it does not account for spheres.

A series of papers by  Kronheimer (\cite{Kr}, \cite{Kr2}), Friedl-Vidussi \cite{FV} and Nagel \cite{N} obtained the following result:

\begin{theorem}[Cor. 7.6, \cite{Kr2}; Theorem 1.1, \cite{FV}; Theorem 5.6, \cite{N}]\label{t:nag}

Let $N$ be an irreducible, closed, oriented 3-manifold.  Assume that $N$ is not a Seifert fibered space and is not covered by a torus bundle.  Let $p:M\rightarrow N$ be an oriented circle bundle.  Then each class $A\in H_2(M,\mathbb Z)$ satisfies
\[
x_4(A)\ge |A\cdot A|+x_3(p_*A).
\]

\end{theorem}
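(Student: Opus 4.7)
The plan is to combine a detection result for the Thurston norm on $N$ with a four-dimensional adjunction inequality on the total space $M$, using the circle bundle structure to transfer basic classes from $N$ to $M$.

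First, I would invoke the Kronheimer--Mrowka detection theorem together with the Friedl--Vidussi and Nagel refinements that remove the taut-foliation hypothesis: since $N$ is irreducible, not Seifert fibered, and not finitely covered by a torus bundle, the Thurston norm of $p_*A$ is detected by monopole basic classes. There is therefore a spin-c structure $\mathfrak{s}_0$ on $N$ with nonzero monopole invariant such that $\langle c_1(\mathfrak{s}_0), p_*A\rangle = x_3(p_*A)$.

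Next, I would transfer $\mathfrak{s}_0$ to a family of spin-c structures on $M$. The pullback $p^*\mathfrak{s}_0$, twisted by integer multiples of a cohomology class canonically associated with the bundle (essentially the Poincar\'e dual of the fiber, controlled by the Euler class of $p$), yields spin-c structures $\tilde{\mathfrak{s}}_k$ on $M$. The crucial input, from Corollary 7.6 of \cite{Kr2}, is that over an appropriate range of $k$ these $\tilde{\mathfrak{s}}_k$ carry nonvanishing Seiberg--Witten invariants and are therefore admissible for the four-dimensional adjunction inequality. I would then apply that inequality to each embedded surface $\Sigma$ representing $A$: for every basic $\tilde{\mathfrak{s}}_k$,
\[
\chi_-(\Sigma)\ \geq\ A\cdot A\ +\ |\langle c_1(\tilde{\mathfrak{s}}_k), A\rangle|.
\]
The pairing $\langle c_1(\tilde{\mathfrak{s}}_k), A\rangle$ is linear in $k$ with constant term $x_3(p_*A)$ and slope controlled by the Euler-class pairing with $A$, which is precisely the quantity governing $A\cdot A$. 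Optimizing over $k$ in the admissible range rearranges the right-hand side into $|A\cdot A| + x_3(p_*A)$, absorbing the sign of $A\cdot A$.

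The principal obstacle is the second step: one must verify that enough of the twisted pullback spin-c structures have nonvanishing Seiberg--Witten invariant, and track carefully how the Euler-class twist interacts with the self-intersection of $A$, so that after optimization the bound consolidates into $|A\cdot A|+x_3(p_*A)$ rather than merely $A\cdot A + x_3(p_*A)$. This is exactly the content assembled by Kronheimer's circle-bundle calculation combined with the Friedl--Vidussi and Nagel detection refinements required by the geometric hypotheses on $N$.
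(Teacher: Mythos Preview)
The paper is a survey and does not give a full proof of this theorem; it only records the key idea behind the proofs in \cite{FV} and \cite{N}: one studies the Seiberg--Witten invariants of an \emph{appropriate finite cover}, and in the graph-manifold case one must find a cover on which the Turaev norm and the Thurston norm agree so that enough basic classes are available.

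Your outline diverges from this at the very first step, and that is where the gap lies. You assert that, under the stated hypotheses on $N$, the Thurston norm of $p_*A$ is detected by monopole basic classes on $N$ itself, attributing this to ``Friedl--Vidussi and Nagel refinements that remove the taut-foliation hypothesis.'' That is not what those papers do. For a general irreducible $N$ in the stated class---in particular for graph manifolds---the Seiberg--Witten basic classes of $N$ need \emph{not} detect the Thurston norm; this failure is exactly why Nagel must pass to a cover where the Turaev and Thurston norms coincide. Consequently your chosen $\mathfrak{s}_0$ may not exist, and the rest of the argument (pulling back, twisting by multiples of the fiber class, and optimizing the adjunction inequality) has nothing to stand on.

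The missing mechanism is the covering trick: one finds a finite cover $N'\to N$ on which detection does hold, pulls the circle bundle back to $M'\to N'$, runs the Kronheimer-style adjunction argument on $M'$ to obtain $x_4(A')\ge |A'\cdot A'|+x_3(p'_*A')$ for the lifted class, and then descends the inequality to $M$ using the multiplicativity of $x_3$, $x_4$, and self-intersection under finite covers. Your steps (2)--(4) are essentially the Kronheimer part of the argument and are fine once one is on a manifold where detection is available; the point is that this manifold is a cover, not $N$ itself.
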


The key to the proofs in \cite{FV} and \cite{N} is to study the Seiberg-Witten invariants of an appropriate finite cover.  In the case of graph manifolds, this necessitates finding a cover where the Turaev norm and the Thurston norm coincide, provding enough basic classes for estimates to work.

It is is also worth noting that this inequality is known to be sharp in cases where the classic adjunction inequality is not; see Section 4.1, \cite{FV}.

\subsection{Overview of Adjunction-Type Results}

In this section, we endeavour to give an overview of the adjunction-type inequalities presented above and in Lawson's survey.

\subsubsection{Strongly indefinite Manifolds}  Let $M$ be a smooth, closed four-manifold with $b^+\ge 2$ or $b^-\ge 2$.  

\begin{enumerate}[leftmargin=*]

\item If $M$ admits non-trivial Gauge theory invariants (including symplectic manifolds), then an adjunction-type inequality holds:
\[
-\chi(\Sigma)\ge A\cdot A + f(A).
\]
Here $f(A)$ is a degree 1 function depending on the Gauge invariants; see Theorem \ref{t:sw}.  We also refer readers to Theorem 22, \cite{FKM}, which proves an adjunction inequality for oriented closed spin 4-manifold which has the same rational cohomology ring as $K^3\#K^3$ using the Bauer-Furuta stable cohomotopy SW invariant; see also Theorem 5.4, \cite{FKMM}.

\item If $M$ admits no non-trivial Gauge invariants, then, only weak adjunction inequalities
\[
-\chi(\Sigma) \ge\alpha A\cdot A+ f(A)
\] 
with $\alpha<1$ exist.

\begin{itemize}[leftmargin=*]
\item Characteristic Classes. Combining the results of Acosta (see Theorems 38 and 39, \cite{L}), Hamilton \cite{H2} and Yasuhara (See Theorem 36, \cite{L}), one obtains the following result:  if $M$ is closed simply connected, $A\in H_2(M,\mathbb Z)$ is characteristic, and either $A\cdot A<0$ (Hamilton) or $A\cdot A$ is equivalent $\sigma(M)$ (Acosta, Yasuhara), then for any surface $\Sigma$ representing $A$, we have a weak adjunction inequality with $\alpha=1/4$.

\item  Divisible Classes. Suppose that $A=2^qB$.  Then the work of Rokhlin \cite{R} produces a weak adjunction-type inequality with $\alpha=1/4$.
Bryan (see Theorem 43, \cite{L}, and also Kotschick-Matic, Theorem 42, \cite{L}) improved this slightly by assuming that $B$ is characteristic and $q$ is small, for example when $q=1$,  $\alpha=15/16$.

If $A=dB$, $d$ is not divisible by 2, then a weak adjunction-type inequality
exists with $\alpha\le \frac{1}{4}$.  More precisely,
\begin{itemize}
\item Rokhlin \cite{R}:  $d$ is a power of an odd prime, so $\alpha=\frac{1}{4}\frac{d^2-1}{d^2}$, which is asymptotic to $\frac{1}{4}$.
\item Bryan (Theorem 43, \cite{L}): under the assumption that $M$ is spin, $
\alpha=\frac{5}{4}\frac{d^2-1}{6d(d-1)}$. 
\end{itemize}
In contrast to these results, in a series of papers, Lee-Wilczynski determined the minimal genus for topologically locally flat embeddings in simply connected four-manifolds; see Theorem 1.1, \cite{LW1} (a general estimate for arbitrary four-manifolds is given in Theorem 2.1, \cite{LW2}).  In the closed, simply connected case, this theorem can be stated as follows:

\begin{theorem}[Theorem 1.1, \cite{LW1}]  Let $M$ be a closed, oriented, simply connected four-manifold.  Suppose that $A=dB\in H_2(M,\mathbb Z)$ is divisible with divisibility $d$.  Then there exists a simple, topologically locally flat embedding $\Sigma$ representing $A$ by an oriented surface of genus $g>0$ if and only if
\[
b_2+2g\ge \max_{0\le j\le d}|\sigma(M)-2j(d-j)B\cdot B|.
\]

\end{theorem}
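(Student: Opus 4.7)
My proposal is to prove necessity and sufficiency independently, with both directions passing through the $d$-fold cyclic branched cover of $M$ along a locally flat representative surface. For necessity, suppose $\Sigma\subset M$ is such a surface of genus $g$ with $[\Sigma]=dB$. The divisibility of $[\Sigma]$ by $d$ means the linking homomorphism $H_1(M\setminus\Sigma;\mathbb Z)\to\mathbb Z$ descends to $\mathbb Z/d$, so one constructs the $d$-fold cyclic branched cover $\pi\colon\tilde M\to M$ with branch locus $\Sigma$. The deck action of $\mathbb Z/d$ on $H^2(\tilde M;\mathbb C)$ splits it into character eigenspaces $V_j$ for $0\le j\le d-1$. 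An Euler-characteristic and transfer computation using the genus $g$ of $\Sigma$ yields $\dim_{\mathbb C}V_j\le b_2(M)+2g$, while the Atiyah--Singer $G$-signature theorem evaluates the signature of the restricted Hermitian form on $V_j$ as
\[
\sigma_j=\sigma(M)-2j(d-j)\,B\cdot B.
\]
The elementary bound $|\sigma_j|\le\dim_{\mathbb C}V_j$ then gives
\[
b_2(M)+2g\ge|\sigma(M)-2j(d-j)B\cdot B|
\]
for each $0\le j\le d$, the endpoints $j=0,d$ reducing to the always-valid $|\sigma(M)|\le b_2(M)+2g$.

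\noindent\textbf{Sufficiency.} For the converse, I would start with any locally flat surface of some (possibly large) genus $g_0$ representing $A$; such a surface exists because $M$ is simply connected and $A\in H_2(M;\mathbb Z)$. The task is then to reduce to genus exactly $g$. The key input is Freedman's topological surgery for simply connected $4$-manifolds, by which the existence of a locally flat embedded surface realizing prescribed homological data reduces to algebraic realizability of the Hermitian intersection forms on the eigenspaces $V_j$ of the prospective branched cover. The inequality $b_2(M)+2g\ge\max_{0\le j\le d}|\sigma(M)-2j(d-j)B\cdot B|$ is precisely the arithmetic condition for such Hermitian forms to exist on spaces of the required dimensions. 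Once realizability is arranged, Freedman's topological embedding theorems would assemble a locally flat surface of genus exactly $g$ representing $A$.

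\noindent\textbf{Main obstacle.} The necessity half is essentially a clean invariant-theoretic computation once the branched-cover and $G$-signature formalism is in place. The substantive obstacle is the sufficiency step: translating the numerical inequality into an actual locally flat surface of genus exactly $g$ (rather than one with merely bounded genus) requires careful deployment of Freedman's machinery and tight bookkeeping of the eigenspace data under each handle move. The restriction $g>0$ presumably enters here to sidestep the especially subtle sphere case, where Casson-handle issues and Donaldson-type obstructions complicate the realization step.
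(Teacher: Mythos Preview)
The paper under review is a survey and does not contain a proof of this theorem; it is quoted verbatim as Theorem~1.1 of Lee--Wilczy\'nski \cite{LW1} and no argument is given or sketched. There is therefore nothing in the present paper to compare your proposal against.

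That said, your outline is broadly faithful to the actual Lee--Wilczy\'nski argument in \cite{LW1}. Necessity does proceed via the $d$-fold cyclic branched cover and the $G$-signature theorem applied to the eigenspace decomposition of $H^2(\tilde M;\mathbb C)$, exactly as you describe; this part of the argument is classical and goes back to Rokhlin and Hsiang--Szczarba. Sufficiency in \cite{LW1} does rest on Freedman's topological surgery, but your description of that step is schematic rather than a proof: the passage from the numerical inequality to an actual locally flat surface of the \emph{exact} genus $g$ is the substantive content of the Lee--Wilczy\'nski paper, and involves a careful analysis of the surgery obstruction groups and the Kervaire--Milnor invariant, not merely ``realizability of Hermitian forms.'' Your identification of this as the main obstacle is correct, but the proposal as written does not resolve it; it names the tool without carrying out the construction. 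If you intend this as a genuine proof rather than a reading guide, the sufficiency half needs to be filled in with the specific surgery-theoretic argument from \cite{LW1} (or an alternative).
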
  

If $b^+=1$, and either $A\cdot A\ge 0$ or $d$ is large enough, then this result leads to a weak adjunction-type inequality with $\alpha=1/2$.
Observe that this Theorem actually guarantees the existence of a surface if the inequality is satisfied.

\item  Primitive and Ordinary Classes. There exist manifolds where all such classes are represented by spheres; see Wall \cite{Wa2}.

\item Configurations of Disjoint Surfaces. An adjunction-type inequality 
with $f(A)$ a degree 1 function, which is some cohomological evaluation on $A$, holds in certain settings for at least one of the surfaces; see Theorem \ref{t:st2}.

\end{itemize}

\end{enumerate}

\subsubsection{ $b^+=1$, Classes with Non-Negative Self-Intersection}   An adjunction-type inequality holds
with $f(A)$ a degree 1 function, which is some cohomological evaluation on $A$; see Theorems \ref{t:st} and \ref{t:cg}.

The results also hold if $b^-=1$ and considering classes with non-positive self-intersection, with appropriate absolute values.

\subsubsection{Definite Manifolds with $b^\pm\ge 2$}
\begin{enumerate}
\item Divisible or Characteristic Classes.  The results in the strongly indefinite case all continue to hold and weak adjunction-type inequalities are known.

\item Primitive and Ordinary Classes.  The authors are only aware of a result for configurations of $b^+$ disjoint surfaces; an adjunction-type inequality 
with $f(A)$ a degree 1 function, which is some cohomological evaluation on $A$, holds in certain settings for at least one of the surfaces; see Theorem \ref{t:st2}.
\end{enumerate}

\section{Minimal Genus}

To determine, or bound from above, the minimal genus of a specific class using an adjunction type inequality, it is necessary to explicitly construct a submanifold.  Two key concepts are discussed in the following examples:
\begin{itemize}[leftmargin=*]
\item There are a number of classical constructions such as the connected sum and smoothing of algebraic curves.  A further method, the circle sum introduced in \cite{LL3}, is described below.
\item Reducing the set of classes that need to be studied by using the induced action of the orientation preserving diffeomorphisms of $M$ on $H_2(M,\mathbb Z)$.   
\end{itemize}

\subsection{Rational Manifolds} 

Rational manifolds are either $S^2\times S^2$ or $\mathbb CP^2\#k\overline{\mathbb CP^2}$.  The minimal genus problem for $S^2\times S^2$ and the $k=1$ case was described in Theorem 15, \cite{L}.  The case $k=0$ is the classical Thom conjecture (Theorem 12, \cite{L}).  Therefore, let $M=\mathbb CP^2\#k\overline{\mathbb CP^2}$ and assume that $k\ge 2$.

 Call a basis $\{H,E_1,..,E_k\}$ of $ H_2(M,\mathbb Z)$ the {\it standard basis} if  $H\cdot H=1$, $E_i\cdot E_j=0$ for $i\ne j$ and $E_i\cdot E_i=-1$.  In this basis, a class will be denoted $A=aH-\sum_ib_iE_i=(a,b_1,...,b_k)$.  The standard canonical class $K_{st}$ will denote the class $(-3,-1,...,-1)$.

$H_2(M,\mathbb Z)$ can be viewed from the following three different viewpoints with corresponding automorphism (sub) groups:\begin{enumerate}[leftmargin=*]
\item The homology of the manifold $M$.  The geometric automorphism group is given by
  \begin{equation}\label{e:D(M)}D(M)=\{\sigma\in Aut(H_2(M,\mathbb Z)): \sigma=f_*\text{ for some }f\in \textit{Diff}^+(M)\}.\end{equation}

\item An integer lattice $L$ with quadratic form $Q$ (the intersection form on $M$) together with the orthogonal group $O(L)$ of lattice automorphisms preserving $Q$.

\item (\cite{ZGQ}) The lattice $H_2(M,\mathbb Z)$ is the root lattice of a Kac-Moody algebra, $Q$ is the generalized Cartan matrix, and the Weyl group $W$ is the subgroup of $O(L)$ generated by reflections on classes with $Q(x,x)\in\{-1,-2\}$.  A reflection of $B$ on the class $A$ is given by $r_A(B)=B-\frac{2Q(A,B)}{A^2}A$.  Note that for $k\le 9$, this is a hyperbolic Kac-Moody algebra.

\end{enumerate}

Clearly $D(M)\subset O(L)$.  In fact, the following is true:

\begin{theorem} [\cite{FM}, \cite{ZGQ}, \cite{LL2}]
\begin{enumerate}
\item   If $k\le 9$, then $W= D(M)=O(L)$.
\item If $k\ge 10$, then $D(M)$ is a proper subgroup of $W$.
\end{enumerate}
\end{theorem}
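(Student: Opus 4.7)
The inclusions $W\subseteq O(L)$ and $D(M)\subseteq O(L)$ are immediate from the definitions. A key structural input I would take from Friedman-Morgan \cite{FM} is that for a rational surface every $f\in\Diff^+(M)$ acts on $H_2$ as a product of reflections on $(-1)$- and $(-2)$-classes represented by embedded spheres, so $D(M)\subseteq W$ always. The theorem then reduces to two independent statements: for $k\le 9$, that $W=O(L)$ and that every Weyl generator is itself realized by a diffeomorphism; for $k\ge 10$, that some Weyl reflection is \emph{not} realized by any diffeomorphism.

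For $k\le 9$, the lattice equality $W=O(L)$ I would derive from the Kac-Moody description in \cite{ZGQ}: the orthogonal complement of $K_{st}$ inside $L$ is a root lattice of finite type $E_k$ when $k\le 8$ and of affine type $\tilde E_8$ when $k=9$; together with an additional reflection on the rank-one complementary factor, the reflections on $(-1)$- and $(-2)$-vectors exhaust $O(L)$ in these ranges. For $W\subseteq D(M)$, each generator is realized geometrically: a reflection on a $(-1)$-class $E$ arises from contracting an embedded sphere representing $E$ and re-blowing up with reversed orientation (followed by a Gluck twist to correct orientation), while a reflection on a $(-2)$-class $\alpha$ is the Dehn twist along an embedded $(-2)$-sphere representing $\alpha$. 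The essential input, valid for $k\le 9$, is that every $(-2)$-class is represented by an embedded sphere: this is where the del Pezzo (resp.\ rational elliptic) geometry of $M$ enters, and it saturates the symplectic genus bound $g_m(\alpha)\le\eta(\alpha)\le 0$ coming from Lemma \ref{l:sg} and Theorem \ref{t:symp}.

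For $k\ge 10$, I would exhibit a $(-2)$-class $\alpha$ in $M=\CP^2\#k\overline{\CP^2}$ that is not represented by any embedded sphere, so that $r_\alpha$ lies in $W\setminus D(M)$. The lower-bound tool is Theorem \ref{t:cg}: for $k\ge 10$ the signature-$(1,k)$ intersection form is large enough to support both an adjunction class $c$ satisfying $c\cdot c>\sigma(M)$ and a $(-2)$-class $\alpha$ with $|c\cdot\alpha|\ge 4$, which forces $h(\alpha)\ge 1$ and hence $g_m(\alpha)\ge 1$. The main obstacle is the joint construction of $\alpha$ and $c$: one must exploit the hyperbolic structure of the $E_{10}$-type root system that appears in $L$ precisely at $k=10$, producing $(-2)$-classes that lie far outside the symplectic chamber, while simultaneously arranging that $c$ sits in the positive cone of $L$ and pairs appropriately with $\alpha$. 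This lattice-geometric balancing act is the technical heart of the arguments in \cite{FM} and \cite{LL2}.
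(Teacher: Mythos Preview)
The paper does not supply a proof of this statement; it is quoted with citations to \cite{FM}, \cite{ZGQ}, \cite{LL2}. Your outline, however, contains two genuine gaps.

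In part~(1) you write $g_m(\alpha)\le\eta(\alpha)\le 0$ for a $(-2)$-class, invoking Lemma~\ref{l:sg} and Theorem~\ref{t:symp}. Both references point the wrong way: Lemma~\ref{l:sg} asserts $\eta(\alpha)\le g_m(\alpha)$ (the symplectic genus is a \emph{lower} bound), and Theorem~\ref{t:symp} addresses only classes of positive square. No adjunction-type statement can manufacture an embedded sphere. The non-circular route is to realize only the \emph{generating} reflections of $W$---those on $E_i$, on $E_i-E_{i+1}$, and on $H-E_1-E_2-E_3$---each of which is represented by an explicit embedded sphere in $\CP^2\#k\overline{\CP^2}$; this already gives $W\subseteq D(M)$ without first knowing that every $(-2)$-class is spherical.

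In part~(2) the proposed mechanism fails outright. Theorem~\ref{t:cg} is stated only for $A\cdot A\ge 0$, so it does not apply to your $(-2)$-class $\alpha$; and even formally one has $h_c(\alpha)=1+\tfrac12(-2-|c\cdot\alpha|)\le 0$ for every characteristic $c$, so the asserted bound $h(\alpha)\ge 1$ is arithmetically impossible. Moreover, even granting a $(-2)$-class with $g_m(\alpha)\ge 1$, the step ``hence $r_\alpha\notin D(M)$'' is a non sequitur: $r_\alpha$ sends $\alpha$ to $-\alpha$, so it does not visibly carry any spherical class to a non-spherical one. The actual obstruction used in \cite{FM} and \cite{LL2} is that $D(M)$ must preserve a distinguished subset of $H_2$ that is \emph{not} $W$-invariant for $k\ge 10$ (for instance the set of classes of smoothly embedded $(-1)$-spheres, equivalently the set $\mathcal K$ of symplectic canonical classes); one then exhibits an element of $W$---not the reflection $r_\alpha$ itself---that moves this set.
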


Moreover, B.H. Li- T.J. Li \cite{LL5} identified explicit generators of $D(M)$ (see also \cite{Wa} and \cite{ZGQ}).

\begin{definition}
Two classes $A,B\in H_2(M,\mathbb Z)$ are called {\it $D(M)$-equivalent} if there is a $\sigma\in D(M)$ such that $\sigma(A)=B$.  Denote by $O_A$ the orbit of the class $A$ under the action of $D(M)$.
\end{definition}

Homology classes in rational manifolds under $D(M)$-action exhibit two special classes.  The first, reduced, arises as the elements of the fundamental chamber $C$; see \cite{Ka} and \cite{Wa}.  The second are simplified classes, which form the counterpart to the reduced classes; see \cite{LL4}.    

\begin{definition}  Let $A\in H_2(M,\mathbb Z)$.  \begin{enumerate}
\item A class $A=(a,b_1,...,b_k)$ is called reduced if $a\ge 0,\;\;b_1\ge ...\ge b_k\ge 0$ and \begin{enumerate}
\item $a\ge b_1$ ($k=1$),
\item $a\ge b_1+b_2$ ($k=2$) or 
\item $a\ge b_1+b_2+b_3$ ($k\ge 3$).
\end{enumerate}

\item A class $A=(a,b_1,...,b_k)$ is called simplified if $a\ge 0,\;\;b_1\ge ...\ge b_k\ge 0$ and \begin{enumerate}
\item $2a\le b_1+b_2$ ($k=2$) or 
\item $3a\le b_1+b_2+b_3$ ($k\ge 3$).
\end{enumerate}

\end{enumerate}

\end{definition}


It is not hard to see that if $k\le 9$ and $A^2<0$, then $A$ may be equivalent to a simplified class, but not a reduced one.  For $k\ge 10$, $A^2<0$, $A$ may be equivalent to either type.  For each fixed value $A^2=-n$, the set of simplified classes is always finite.

\begin{lemma}[ \cite{Ka}, \cite{L1}, \cite{LL4}, \cite{ZGQ}]\label{l:class}
Let $M=\mathbb CP^2\#k\overline{\mathbb CP^2}$ and $A\in H_2(M,\mathbb Z)$ with $A\ne 0$.  
\begin{enumerate}[leftmargin=*]
\item $A$ is $D(M)$-equivalent to a reduced or simplified class.
\item Each orbit $O_A$ contains either a simplified class or a reduced class, never both.  There is an efficient algorithm to find a representative in each orbit.
\item If $A$ is $D(M)$-equivalent to a reduced class, then this is the unique reduced class in $O_A$.

\end{enumerate}
\end{lemma}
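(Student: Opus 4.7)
My plan is to establish (1), (2), and (3) together via an explicit reduction algorithm using the generators of $D(M)$ identified in \cite{LL5}, combined with the standard Weyl-chamber picture. The generators I would exploit are the reflections $r_{E_i-E_j}$ (permuting $b_i\leftrightarrow b_j$), $r_{E_i}$ (flipping the sign of $b_i$), and, for $k\ge 3$, the Cremona reflections $r_{H-E_i-E_j-E_\ell}$; with $d = a - b_i - b_j - b_\ell$, Cremona sends $(a, b_i, b_j, b_\ell) \mapsto (a+d,\, b_i+d,\, b_j+d,\, b_\ell+d)$ and fixes the remaining coordinates. The algorithm for (1) then iterates: first use permutations, sign changes, and an auxiliary sign-flip element of $D(M)$ (furnished by \cite{LL5}) to arrange $a \ge 0$ and $b_1 \ge b_2 \ge \cdots \ge b_k \ge 0$; then, if $k \ge 3$ and $\tfrac13(b_1+b_2+b_3) < a < b_1+b_2+b_3$, apply $r_{H-E_1-E_2-E_3}$ and return to the start. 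Termination rests on the direct identity
\[
\sum_{i=1}^{3} (b_i+d)^2 - \sum_{i=1}^{3} b_i^2 \;=\; d\bigl(3a - (b_1+b_2+b_3)\bigr),
\]
which is strictly negative in the specified window (since $d<0$ and $3a-(b_1+b_2+b_3)>0$ there), so the non-negative integer $\sum b_i^2$ strictly decreases at each Cremona step and the loop must halt. A short inequality check shows the output satisfies either $a\ge b_1+b_2+b_3$ (reduced) or $3a\le b_1+b_2+b_3$ (simplified); the cases $k\le 2$ are handled analogously via the small-Picard generators of \cite{LL5}.

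For (2), mutual exclusion of reduced and simplified representatives in a single orbit follows by comparing self-intersections. A simplified class $S\ne 0$ satisfies $3a_S\le b_{S,1}+b_{S,2}+b_{S,3}$, which via Cauchy--Schwarz gives $a_S^2\le\tfrac13(b_{S,1}^2+b_{S,2}^2+b_{S,3}^2)$ and hence $S^2 = a_S^2-\sum b_{S,i}^2 \le -2a_S^2 \le 0$, with equality forcing $S=0$. A reduced class $R$ with $k\le 9$ satisfies $R^2\ge(9-k)b_{R,3}^2\ge 0$ by combining $a_R\ge b_{R,1}+b_{R,2}+b_{R,3}$ with monotonicity of the $b_{R,i}$. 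Since $D(M)$ preserves the intersection form, $R^2=S^2$ is impossible for nontrivial classes when $k\le 9$. For $k\ge 10$ the analysis is more delicate --- reduced classes can have negative square --- but a finer case-check using the full generator list of \cite{LL5} resolves the dichotomy. Efficiency of the algorithm is manifest from the $\sum b_i^2$-descent, since each iteration is polynomial in the bit-size of the input.

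For (3), uniqueness of the reduced representative is the standard Weyl-chamber argument: the reduced inequalities $b_1\ge\cdots\ge b_k\ge 0$ and $a\ge b_1+b_2+b_3$ are exactly the defining inequalities for (the closure of) a fundamental chamber for the Weyl group $W$ acting on $H_2(M,\R)$, so each $W$-orbit meets the chamber in at most one point, and since $D(M)\subseteq W$, the same holds for $D(M)$-orbits.

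The main obstacle I anticipate is the ``arrange $a\ge 0$'' step --- identifying an explicit element of $D(M)$ that negates $a$ without disrupting the $\sum b_i^2$-descent is needed because Cremona can drive $a$ negative while permutations and sign changes alone do not affect $a$. A secondary subtlety is the mutual-exclusion argument in (2) for $k\ge 10$, where reduced classes can have negative square and the simple Cauchy--Schwarz bound no longer separates them from simplified classes outright; the complete argument there requires the explicit generator relations of \cite{LL5}, together with confirmation that every reflection invoked actually lies in $D(M)$ and not merely in the larger $W$.
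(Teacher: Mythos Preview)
The paper gives no proof of this lemma; it is stated with citations to \cite{Ka}, \cite{L1}, \cite{LL4}, \cite{ZGQ} and treated as known. Your algorithm via permutations, sign changes, and Cremona moves is the one found in those references, so there is no alternative argument in the paper to compare against.

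Your first flagged obstacle is not one: complex conjugation on each summand of $\mathbb{CP}^2\#k\overline{\mathbb{CP}^2}$ is an orientation-preserving diffeomorphism inducing $-\mathrm{id}$ on $H_2$, so you may negate the class whenever $a$ goes negative without disturbing $\sum b_i^2$. More simply, the standard descent tracks $|a|$ rather than $\sum b_i^2$: in your window one has $a'=2a-(b_1+b_2+b_3)$ with $-a<a'<a$, so $|a'|<|a|$ strictly, and the sign issue never arises. The genuine gap is the one you name second. For $k\ge 10$ reduced classes can have negative square (as the paper itself remarks just before the lemma), so your Cauchy--Schwarz separation of reduced from simplified via self-intersection fails outright, and ``a finer case-check using \cite{LL5}'' is a placeholder, not an argument; one must actually show that no word in the specific $D(M)$-generators carries a reduced class to a simplified one, which is exactly where the proper inclusion $D(M)\subsetneq W$ bites. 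Your argument for (3), by contrast, is sound: $D(M)\subseteq W$ reduces uniqueness to the Coxeter-theoretic fact that the closed fundamental chamber meets each $W$-orbit in at most one point, and that ``at most one'' direction requires no Tits-cone hypothesis.
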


The case for simplified classes is slightly more subtle.

\begin{lemma}\cite{D2} \label{l:main} Let $M=\mathbb CP^2\#k\overline{\mathbb CP^2}$ ($k\ge 3$) and let $A\in H_2(M,\mathbb Z)$ with $A^2=-n<0$.  Assume that $A$ is primitive and that the $D(M)$-orbit $O_A$ of $A$ contains a simplified class.  Then, for each $(k,n)$, $n>0$, the following is true:\begin{enumerate}[leftmargin=*]
\item If $A$ is ordinary, then $O_A$ is the unique primitive ordinary orbit.
\item If $n\equiv_42$ and $A$ is characteristic, then  $O_A$ is the unique primitive characteristic orbit.
\end{enumerate}
\end{lemma}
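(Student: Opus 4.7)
The plan is to show that any two primitive simplified classes of the same self-intersection $-n$ and the same parity---both ordinary, or both characteristic in the case $n \equiv 2 \pmod{4}$---lie in a common $D(M)$-orbit, which gives (1) and (2) simultaneously. Without loss of generality we may assume $A$ itself is simplified (since the orbit contains such a representative). I will use the explicit generators of $D(M)$ from Li--Li \cite{LL5}: permutations of $\{E_1, \ldots, E_k\}$, the sign involutions $E_i \mapsto -E_i$, and the Cremona reflections $r_R$ on the $(-2)$-classes $R = H - E_{i_1} - E_{i_2} - E_{i_3}$. All of these preserve $A\cdot A$, primitivity, and the ordinary/characteristic dichotomy. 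A preliminary observation is that the simplified condition $3a \le b_1 + b_2 + b_3$ together with $a^2 + n = \sum_i b_i^2$ and Cauchy--Schwarz yields the a priori bound $a \le \sqrt{n/2}$, so the set of primitive simplified classes for fixed $(k,n)$ is finite up to permutation.

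The key step is to reduce an arbitrary primitive simplified $A=(a, b_1, \ldots, b_k)$ with $a > 0$ to one with strictly smaller coefficient of $H$ in the same orbit. The strategy is to choose three indices $\{i_1, i_2, i_3\}$ so that $b_{i_1}+b_{i_2}+b_{i_3}$ lies in an interval near $2a$; applying $r_R$ replaces the $H$-coefficient by $2a - (b_{i_1}+b_{i_2}+b_{i_3})$, and a suitable combination of sign changes on newly-negative $E$-entries and a reordering permutation brings the result back into simplified form with a strictly smaller value of $a$. Existence of the required triple at each stage, and careful treatment of configurations where an obvious choice fails (e.g.\ when all $b_i$ are equal), is the technical heart of the argument and uses both the primitivity hypothesis and the simplified condition. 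Iterating produces a simplified representative with $a = 0$.

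In the terminal case $a = 0$, the class is a primitive vector of norm $n$ in the negative definite sublattice spanned by the $E_i$'s. Further $D(M)$-moves---combining Cremona reflections that temporarily leave the $a = 0$ locus with permutations and sign flips---connect any two such vectors of the same parity; this amounts to showing that distinct representations of $n$ as a sum of squares are linked by a sequence of elementary $D(M)$-realizable transformations. The main obstacle is precisely this last step: for $k \ge 10$ the group $D(M)$ is a proper subgroup of $O(L)$, so one cannot simply appeal to general orthogonal transformations between sum-of-squares decompositions, and the Cremona generators must be deployed carefully. The congruence $n \equiv 2 \pmod{4}$ in the characteristic case is exactly what ensures that the chain of Cremona moves can be arranged so that every intermediate class remains characteristic, which is what restricts the analogous uniqueness in (2) to this residue class.
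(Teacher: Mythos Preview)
The paper is a survey and does not supply its own proof of this lemma; it is stated with a citation to \cite{D2}. So there is no paper proof to compare against, and your proposal must be assessed on its own merits.

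There is a genuine gap in the characteristic case. A class $A=(a,b_1,\ldots,b_k)$ in the standard basis is characteristic if and only if $a$ and every $b_i$ are odd (test against $H$ and each $E_i$). Hence no characteristic class can have $a=0$, and your ``terminal case $a=0$'' is simply unreachable for part~(2). Whatever normal form one reduces to in the characteristic situation, it cannot be the one you describe, so the entire endgame for (2) collapses. Relatedly, your explanation of the hypothesis $n\equiv 2\pmod 4$ is incorrect: elements of $D(M)$ are automorphisms of the intersection lattice and therefore automatically preserve the characteristic/ordinary dichotomy, so no arrangement of Cremona moves is needed to ``keep intermediate classes characteristic''. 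The role of the congruence is not what you claim; it is tied to which self-intersections actually occur for primitive characteristic simplified classes (recall $A^2\equiv\sigma(M)=1-k\pmod 8$ for characteristic $A$) and to how many orbits arise, not to preserving parity along a path.

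Even in the ordinary case your argument is only a strategy. The two steps you flag as ``the technical heart'' and ``the main obstacle'' --- finding at each stage a triple with $b_{i_1}+b_{i_2}+b_{i_3}$ in a suitable window around $2a$, and then connecting all primitive $a=0$ vectors of the same square by $D(M)$-moves --- are precisely the content of the lemma, and you have not carried them out. Note for instance that applying the Cremona reflection to the three largest $b_i$ of a simplified class gives new $H$-coefficient $2a-(b_1+b_2+b_3)\le -a$, so the naive move does not decrease $|a|$; the existence of a better triple is not automatic (and for $k=3$ there is only one triple). A complete argument needs either a different reduction target or a careful case analysis that you have not provided.
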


For $k=2$ this is known to be false by an example of C.T.C. Wall (see \cite{W}). 

These reductions imply that to understand the minimal genus problem for rational manifolds, it suffices to solve the problem on reduced and simplified classes.  

\begin{lemma}[Lemma 3.4, \cite{LL2}; \cite{D}]
If A is reduced, then 
\[
\eta(A)=\frac{1}{2}(K_{st}\cdot A+A\cdot A)+1.
\] 
\end{lemma}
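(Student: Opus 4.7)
The symplectic genus $\eta(A) = \max_{K \in \mathcal K_A} \eta_K(A)$ depends on $K$ only through the linear quantity $K \cdot A$, so the plan is to show that the maximum of $K \cdot A$ over $K \in \mathcal K_A$ is attained at $K = K_{st}$. Two claims suffice: (a) $K_{st} \in \mathcal K_A$, and (b) $K \cdot A \le K_{st} \cdot A$ for every $K \in \mathcal K$.

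For (a), I would first observe that a reduced class with $A \ne 0$ necessarily has $a > 0$: otherwise, $a = 0$ combined with $b_1 \ge \cdots \ge b_k \ge 0$ and the inequality $a \ge b_1 + b_2$ (or $a \ge b_1 + b_2 + b_3$ when $k \ge 3$) forces all $b_i = 0$. I would then produce a witness $\alpha \in \mathcal C_{M, K_{st}}$ with $\alpha \cdot A > 0$ by taking a Kähler class of the form $\alpha = H - \varepsilon \sum_i E_i$ for $\varepsilon > 0$ sufficiently small; this $\alpha$ lies in the standard Kähler cone of a generic blowup, has positive square, and satisfies $\alpha \cdot A = a - \varepsilon \sum b_i > 0$.

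For (b), I would invoke the result of Li--Liu that the symplectic canonical classes of a rational 4-manifold form a single $D(M)$-orbit, so $\mathcal K = D(M) \cdot K_{st}$. Writing $K = \phi(K_{st})$ with $\phi \in D(M)$ and using that $\phi$ is an isometry, $K \cdot A = K_{st} \cdot \phi^{-1}(A)$, so setting $A' = \phi^{-1}(A) \in O_A$ it is enough to prove $K_{st} \cdot A' \le K_{st} \cdot A$ for every $A' \in O_A$. To do this I would run the reduction algorithm of Lemma \ref{l:class}(2) starting at $A'$: it transforms $A'$ into the unique reduced representative $A$ of $O_A$ via a finite sequence of generator moves, and I would verify step by step that each move either preserves or increases $K_{st} \cdot \bullet$. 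Permutations of exceptional classes (reflections on $E_i - E_j$) and the Cremona transformation (reflection on $H - E_1 - E_2 - E_3$ when $k \ge 3$) are reflections on $(-2)$-classes orthogonal to $K_{st}$, so they fix $K_{st} \cdot \bullet$; the reflection $r_{E_i}$ changes $K_{st} \cdot \bullet$ by $-2 b_i$ (since $K_{st} \cdot E_i = -1$) and is triggered by the algorithm only when $b_i < 0$, so it strictly increases $K_{st} \cdot \bullet$. Chaining these moves gives $K_{st} \cdot A \ge K_{st} \cdot A'$.

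The main obstacle I anticipate is the case check in the reduction algorithm for small $k$: when $k \le 2$ the Cremona generator is unavailable, so the algorithm must invoke reflections on other $(-1)$-classes such as $H - E_1 - E_2$, and one must carefully verify that each such generator is only applied in a regime (dictated by the reducedness inequalities) in which $K_{st} \cdot \bullet$ is non-decreasing, so that the monotonicity argument goes through uniformly in $k$.
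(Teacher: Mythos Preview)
The survey does not give its own proof of this lemma (it is quoted from \cite{LL2} and \cite{D}), so there is no in-paper argument to compare against; but your proposal contains a genuine gap.

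Your claim (b) --- that $K\cdot A \le K_{st}\cdot A$ for \emph{every} $K\in\mathcal K$ --- is false as stated. For $2\le k\le 9$ the paper records $D(M)=O(L)$, and $-\text{id}\in O(L)$; pulling a symplectic form back by an orientation-preserving diffeomorphism realizing $-\text{id}$ shows $-K_{st}\in\mathcal K$. But for any reduced $A$ with $K_{st}\cdot A<0$ (e.g.\ $A=H$, where $K_{st}\cdot H=-3$) one gets $(-K_{st})\cdot A>K_{st}\cdot A$. Equivalently, in your orbit reformulation: $-A\in O_A$ and $K_{st}\cdot(-A)>K_{st}\cdot A$, so the inequality $K_{st}\cdot A'\le K_{st}\cdot A$ cannot hold for \emph{all} $A'\in O_A$. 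Your reduction-algorithm monotonicity check therefore cannot succeed over the full generating set of $D(M)$; there are generators (those needed to produce $A\mapsto -A$) along which $K_{st}\cdot\bullet$ strictly increases.

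The missing ingredient is exactly the restriction to $K\in\mathcal K_A$ rather than $K\in\mathcal K$. That hypothesis says some $\alpha\in\mathcal C_{M,K}$ satisfies $\alpha\cdot A>0$; transporting by $\phi^{-1}$ this becomes: $A'=\phi^{-1}(A)$ pairs positively with some class in $\mathcal C_{M,K_{st}}$. It is this positivity that rules out $A'=-A$ and, more generally, confines $A'$ to the ``forward'' component of the cone where your monotonicity can be rescued. Any correct argument has to invoke this constraint --- typically via the explicit description of $\mathcal C_{M,K_{st}}$ for rational manifolds --- and your outline never does.
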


As noted in Theorem \ref{t:symp}, the symplectic genus is positive if $A\cdot A>0$.  However, if $A\cdot A<0$, then it is possible for this quantity to be negative and thus to offer no apparent useful bound on the minimal genus.  

\begin{theorem}[Theorem B, Theorem C, \cite{LL2}]  \label{t:rat}Let $A\in H_2(M,\mathbb Z)$ and assume that $A$ is primitive if $A\cdot A=0$.  If $A\cdot A\ge \eta(A)-1$, then $A$ is represented by a connected, embedded symplectic surface.  
\end{theorem}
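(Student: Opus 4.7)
The plan is to reduce $A$ to a canonical form via the $D(M)$-action and then construct the symplectic surface from an algebraic curve in $\mathbb{CP}^2$. Since the symplectic genus and the property of being represented by an embedded symplectic surface are both invariant under $\mathrm{Diff}^+(M)$ (Lemma \ref{l:sg}(2); any $f\in\mathrm{Diff}^+(M)$ pulls a symplectic form back to a symplectic form), Lemma \ref{l:class} lets us assume $A=(a,b_1,\dots,b_k)$ is reduced. If the $D(M)$-orbit of $A$ only contains a simplified class, the same construction applies after replacing $K_{st}$ by a different symplectic canonical class in $\mathcal K(A)$ that realizes $\eta(A)$.

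For reduced $A$, the lemma preceding the theorem gives $\eta(A)=\frac{1}{2}(K_{st}\cdot A+A\cdot A)+1=\frac{(a-1)(a-2)}{2}-\sum_{i=1}^k\binom{b_i}{2}$. A direct rearrangement turns the hypothesis $A\cdot A\ge\eta(A)-1$ into
\[
\binom{a+2}{2}-1-\sum_{i=1}^{k}\binom{b_i+1}{2}\;\ge\;0,
\]
i.e.\ the linear system $\mathcal L=\mathcal L(a;b_1,\dots,b_k)$ of plane curves of degree $a$ with multiplicity at least $b_i$ at $k$ prescribed points $p_1,\dots,p_k\in\mathbb{CP}^2$ has non-negative expected projective dimension. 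I would then choose the $p_i$ in general position, select an irreducible curve $C\in\mathcal L$ whose singularity at each $p_i$ is an ordinary $b_i$-fold point, and blow up $\mathbb{CP}^2$ at the $p_i$. The resulting rational surface is diffeomorphic to $M$, and the proper transform $\widetilde C$ is a smooth algebraic curve of class $A=aH-\sum b_iE_i$ with genus exactly $\eta(A)$ by the adjunction formula on the blowup. Pulling back the Fubini--Study K\"ahler form then exhibits $\widetilde C$ as the desired connected embedded symplectic surface.

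The main obstacle is guaranteeing the existence of such an irreducible $C$ with only ordinary singularities at the $p_i$, a question of Segre--Harbourne--Gimigliano--Hirschowitz type. My plan is to (i) use Bertini to obtain generic smoothness on positive-dimensional strata of $\mathcal L$; (ii) in the zero-dimensional case, clear non-ordinary singularities by a local symplectic deformation that preserves the homology class and the genus; and (iii) when $\mathcal L$ forces reducible members $C=C_1\cup\dots\cup C_r$, replace $C$ by its symplectic sum along intersection points to obtain a connected surface of the correct genus, using that symplectic smoothing drops the genus by $C_i\cdot C_j$ at each pair of components. The edge cases $A\cdot A=0$ with $A$ primitive, and the exceptional classes (such as $A=-E_i$ with $A\cdot A=\eta(A)-1=-1$), are handled directly via pencils and via the exceptional divisors themselves. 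A cleaner conceptual alternative that bypasses this case analysis is to invoke Taubes' $\mathrm{SW}=\mathrm{Gr}$: the condition $A\cdot A\ge K\cdot A$ encodes non-negative formal dimension for the Seiberg--Witten moduli space, and in the $b^+=1$ setting a wall-crossing computation in the chamber determined by $\omega$ (with $K_\omega=K$) yields a non-zero Gromov invariant, producing an embedded pseudo-holomorphic and hence symplectic representative of $A$.
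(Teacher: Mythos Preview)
Your algebraic-curve route runs into a genuine obstacle: the existence of an irreducible plane curve of degree $a$ with ordinary $b_i$-fold points at $k$ general points is exactly the Segre--Harbourne--Gimigliano--Hirschowitz problem, which is open for $k\ge 10$ and delicate even below that. Bertini only controls singularities away from the base locus, so it says nothing about the singularity type at the assigned points $p_i$; your steps (ii)--(iii) do not repair this, since a non-ordinary singularity or a reducible member containing a $(-1)$-curve component alters the genus count in ways you have not bounded. The remark about simplified orbits is also off: simplified classes have $A\cdot A<0$ and are not reached by the plane-curve blow-up construction at all.

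Your ``cleaner conceptual alternative'' is in fact the approach the paper indicates (the remark immediately following the statement that the inequality ``precisely ensures that the corresponding moduli spaces are non-empty'') and the one used in \cite{LL2}. The hypothesis $A\cdot A\ge\eta(A)-1$ rewrites as $A\cdot A-K\cdot A\ge 0$ for $K\in\mathcal K(A)$, i.e.\ non-negative expected dimension of the Seiberg--Witten/Gromov moduli space; in the $b^+=1$ rational case the wall-crossing formula forces a non-zero invariant, and Taubes' $\mathrm{SW}=\mathrm{Gr}$ produces an embedded $J$-holomorphic (hence symplectic) representative. The one point you should not elide is connectedness: the Gromov invariant a priori counts possibly disconnected or multiply covered configurations, and the primitivity hypothesis when $A\cdot A=0$ is there precisely to exclude multiple covers of a square-zero torus, while for $A\cdot A>0$ positivity of intersections is used to assemble a connected curve.
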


Observe that the inequality in the Lemma precisely ensures that the corresponding moduli spaces are non-empty. 

From this, a characterization of certain classes representable by spheres can be obtained.

\begin{theorem}[Theorem 3, \cite{L1}; Theorem 4.2, \cite{LL2}; Theorem 3, \cite{Kik}]  Up to $D(M)$-equivalence, the following are the only classes with non-negative self-intersection which are spherically representable ($k\ge 1$):

\[
2H;\;\; k(H-E_1); \;\; (k+1)H-kE_1; \;\;(k+2)H-(k+1)E_1-E_2.
\]

\end{theorem}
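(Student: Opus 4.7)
The plan is to establish the classification in two directions: show every listed class is spherically representable (existence), and show every sphere-representable class with $A\cdot A \ge 0$ is $D(M)$-equivalent to one on the list (uniqueness).

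For the uniqueness direction, I invoke Lemma~\ref{l:class}: since $A\cdot A \ge 0$ rules out simplified representatives, the class $A$ is $D(M)$-equivalent to a unique reduced class $A = (a, b_1, \ldots, b_k)$ with $a \ge 0$, $b_1 \ge \cdots \ge b_k \ge 0$, and $a \ge b_1 + b_2 + b_3$ (for $k \ge 3$).  Applying Lambert-Cole's adjunction inequality (Theorem~\ref{t:lc}) to a sphere $\Sigma$ representing $A$, taking $\omega$ a standard symplectic form with $K_\omega = K_{st}$ and $\omega \cdot A > 0$, I obtain $-2 = -\chi(\Sigma) \ge A\cdot A + K_{st} \cdot A$, which rewrites as $\eta_{K_{st}}(A) \le 0$, or equivalently
\[
(a-1)(a-2) \le \sum_{i} b_i(b_i - 1).
\]
Next, I enumerate.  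Direct inspection handles $a \in \{0,1,2\}$, recovering $H$, $2H$, $H-E_1$, $2H-E_1$, $2H-2E_1$, and $2H-E_1-E_2$, each matching the list with appropriate small values of the parameter.  For $a \ge 3$, the structural claim is that $b_1 \in \{a-1, a\}$ and $b_2 \le 1$.  I combine the adjunction bound $\sum b_i^2 - \sum b_i \ge (a-1)(a-2)$ with $\sum b_i^2 \le a^2$ (from $A\cdot A \ge 0$) and the reduced bound $b_1+b_2+b_3 \le a$: using $\sum b_i^2 \le b_1 \sum b_i$ (since $b_i \le b_1$), the adjunction bound yields $(b_1 - 1) \sum b_i \ge (a-1)(a-2)$, and a case analysis on $b_2$ then excludes $b_2 \ge 2$, leaving exactly the three parametric families in the list together with $2H$.

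For the existence direction, I realize each listed class by an explicit construction.  The class $2H$ is a smooth conic in $\CP^2$; the class $(k+1)H - kE_1$ is the proper transform of an irreducible degree-$(k+1)$ plane curve with a single ordinary $k$-fold point at the blowup center $p_1$, whose geometric genus is $\binom{k}{2} - \binom{k}{2} = 0$ by the singularity-corrected genus formula; and $(k+2)H - (k+1)E_1 - E_2$ is analogously the proper transform of a degree-$(k+2)$ plane curve with an ordinary $(k+1)$-fold point at $p_1$ and a smooth branch through $p_2$.  For $k(H-E_1)$ with $k \ge 2$, I exploit the alternative ruling identification $\CP^2 \# 2\overline{\CP^2} \cong (S^2 \times S^2) \# \overline{\CP^2}$: here $H-E_1$ corresponds to a fiber class $F$ of one ruling of $S^2 \times S^2$, and $kF$ is represented by a degree-$k$ branched cover $S^2 \to S^2$ of genus $0$ (with $2k-2$ branch points, by Riemann--Hurwitz), which may be chosen off subsequent blowup centers to descend to a sphere in $\CP^2 \# k\overline{\CP^2}$ for any $k \ge 2$.

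The main obstacle is the enumeration for $a \ge 3$, specifically ruling out $b_2 \ge 2$.  This requires delicately balancing three competing inequalities: the adjunction bound demands large $\sum b_i(b_i-1)$, the condition $A\cdot A \ge 0$ caps $\sum b_i^2$, and the reduced bound restricts how the top three $b_i$-terms sum.  A careful case analysis on $(b_1, b_2, b_3)$ (with the remaining $b_i \le b_3$ contributing via $A\cdot A \ge 0$) is needed to show consistently that any configuration with $b_2 \ge 2$ becomes inconsistent as $a$ grows, either exceeding the $\sum b_i^2 \le a^2$ cap or failing to meet the $\sum b_i(b_i-1) \ge (a-1)(a-2)$ lower bound.
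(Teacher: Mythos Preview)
Your two-step plan (reduce to reduced classes via Lemma~\ref{l:class}, impose $\eta_{K_{st}}(A)\le 0$, enumerate, then construct) is exactly the strategy behind the cited references and the paper's surrounding material. The one methodological difference is that the paper's framework obtains the bound $\eta(A)\le 0$ from the symplectic genus (Lemma~\ref{l:sg}) together with the reduced-class identity $\eta(A)=\eta_{K_{st}}(A)$, while you reach the same inequality via Theorem~\ref{t:lc}; and for existence the paper points to Theorem~\ref{t:rat} (symplectic surfaces from Seiberg--Witten/Gromov--Witten theory) rather than your explicit algebraic curves. Both routes are legitimate; yours is more elementary on the existence side, at the cost of needing to verify the positivity hypothesis $[\omega]\cdot A>0$ in Theorem~\ref{t:lc}, which you should state holds because a nonzero reduced class has $a\ge 1$.

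There is, however, a genuine error in your existence argument for $k(H-E_1)$. A degree-$k$ branched cover $S^2\to S^2$ does not produce an embedded sphere in the class $kF$: its \emph{graph} in $S^2\times S^2$ represents $[S^2\times\mathrm{pt}]+k[\mathrm{pt}\times S^2]$, and its \emph{image} is a single fiber with multiplicity, not an embedded surface. The correct construction is simpler and needs no passage to $S^2\times S^2$: in $\mathbb{F}_1=\mathbb{CP}^2\#\overline{\mathbb{CP}^2}$ the class $H-E_1$ is the fiber of the ruling, with square $0$, so $k$ disjoint parallel fiber-spheres can be tubed together to yield an embedded sphere in $k(H-E_1)$.

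Your enumeration sketch for $a\ge 3$ also has a gap. The inequality $(b_1-1)\sum b_i\ge (a-1)(a-2)$ together with $\sum b_i\le 3a-2$ is not strong enough to rule out $b_2\ge 2$; for instance it does not by itself exclude $b_1=a-2$, $b_2=2$. What makes the argument go through is that the \emph{reduced} condition $b_1+b_2+b_3\le a$ forces $b_3=0$ in that case, and more generally severely caps the tail once $b_2\ge 2$; since indices with $b_i\le 1$ contribute nothing to $\sum b_i(b_i-1)$, one is left with a genuinely finite check on $(b_1,b_2,b_3)$ (convexity of $x(x-1)$ helps here). This is carried out in \cite{L1} and \cite{Kik}, and you should either reproduce it or cite it rather than leaving it as a ``main obstacle''.
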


For when $k\le 9$, the reduced case has been resolved completely.

\begin{lemma} [Theorem 1, \cite{L1}] \label{l:sh1} Assume that $A$ is reduced and $2\le k\le 9$.  Then 
\[
g_m(A)=\max\{0,\eta(A)\}.
\]
\end{lemma}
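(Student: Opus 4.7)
The plan is to prove the two inequalities $g_m(A) \ge \max\{0,\eta(A)\}$ and $g_m(A) \le \max\{0,\eta(A)\}$ separately. The lower bound is immediate from Lemma \ref{l:sg}(1) together with the trivial $g_m(A) \ge 0$. The substantive content is the upper bound, which I obtain by exhibiting an explicit embedded connected surface realizing the claimed genus.

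\textbf{Reducedness bookkeeping.} For a reduced class $A = (a, b_1, \ldots, b_k)$ with $b_1 \ge \cdots \ge b_k \ge 0$ and $a \ge b_1 + b_2 + b_3$ (the controlling case $k \ge 3$), the restriction $k \le 9$ leaves at most six terms $b_4,\ldots,b_9$, each bounded by $b_3$, while reducedness forces $b_3 \le a/3$. A direct calculation then gives (i) $3a \ge \sum_i b_i$, i.e.\ $-K_{st} \cdot A \ge 0$, and (ii) $a^2 \ge \sum_i b_i^2$, i.e.\ $A \cdot A \ge 0$; the second uses $(b_1+b_2+b_3)^2 = \sum_{i\le 3} b_i^2 + 2(b_1 b_2 + b_1 b_3 + b_2 b_3)$ together with $\sum_{i\ge 4} b_i^2 \le 6 b_3^2 \le 2(b_1 b_2 + b_1 b_3 + b_2 b_3)$. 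Combining, $A\cdot A \ge K_{st}\cdot A$, which via the preceding lemma is exactly the hypothesis $A\cdot A \ge \eta(A)-1$ required by Theorem \ref{t:rat}.

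\textbf{Generic case.} When $A^2 > 0$, or $A^2=0$ with $A$ primitive, Theorem \ref{t:rat} produces a connected embedded symplectic surface representing $A$; the Symplectic Thom Conjecture then shows it is genus-minimizing, and the adjunction identity $2g-2 = K_{st}\cdot A + A^2$ for symplectic surfaces identifies its genus with $\eta(A)$. Since smooth surfaces have non-negative genus, $\eta(A) \ge 0$ is automatic in this regime, giving $g_m(A) = \eta(A) = \max\{0,\eta(A)\}$.

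\textbf{Exceptional case: $A^2 = 0$ with $A$ non-primitive.} A Cauchy--Schwarz analysis of the reducedness inequality classifies such classes: they are the multiples $A = m(H-E_1)$ for any $2\le k \le 9$ (with $\eta(A)=1-m$), and, only when $k=9$, additionally $A = m(3H - \sum_{i=1}^9 E_i)$ (with $\eta(A)=1$). In each case write $A = mB$ where $B$ is the fiber class of a natural fibration $\pi\co M \to \mathbb{CP}^1$ of $M$, namely the ruling in the first case and the elliptic fibration on $E(1)$ in the second, with generic fiber $\Sigma_0$ a sphere or torus respectively. Since $\Sigma_0^2=0$ the tubular neighborhood of $\Sigma_0$ is a product $\Sigma_0 \times D^2 \subset M$. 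Inside it I construct the required representative by fixing a degree-$m$ map $\tilde\pi\co \tilde\Sigma_0 \to \Sigma_0$ (branched at two points in the sphere case so $\tilde\Sigma_0$ is still a sphere; unramified in the torus case so $\tilde\Sigma_0$ is still a torus) together with a continuous $\phi\co \tilde\Sigma_0 \to D^2$ separating the $m$ preimages over each generic point of $\Sigma_0$; then $z \mapsto (\tilde\pi(z), \phi(z))$ embeds $\tilde\Sigma_0$ in $\Sigma_0 \times D^2$ as a connected surface of class $m[\Sigma_0] = A$ and genus $\max\{0,\eta(A)\}$. This smooth-topological step is the main obstacle, since Theorem \ref{t:rat}'s primitivity hypothesis precludes a symplectic construction here; the genus-zero/one lower bound from Lemma \ref{l:sg} then forces this construction to be sharp.
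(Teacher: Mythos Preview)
The survey paper does not give its own proof of this lemma; it is simply cited from \cite{L1}. So there is no in-text argument to compare against, only the framework the survey assembles (Lemma~\ref{l:sg}, the formula $\eta(A)=\tfrac12(K_{st}\cdot A+A^2)+1$ for reduced classes, and Theorem~\ref{t:rat}). Your proof is a correct synthesis of exactly these ingredients.

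Two small comments. First, in the generic case you write ``the adjunction identity $2g-2=K_{st}\cdot A+A^2$'' for the symplectic surface produced by Theorem~\ref{t:rat}, but that theorem does not tell you the canonical class of the symplectic form it uses is $K_{st}$. This is harmless: once $A$ carries a connected symplectic representative the survey already records that $g_m(A)=\eta(A)$ (paragraph after Lemma~\ref{l:sg}), and for reduced $A$ one has $\eta(A)=\eta_{K_{st}}(A)$; so the sandwich $\eta(A)\le g_m(A)\le \eta_{K_\omega}(A)\le \eta(A)$ closes without identifying $K_\omega$. Second, in the exceptional construction your map $z\mapsto(\tilde\pi(z),\phi(z))$ needs $\phi$ smooth (not merely continuous) and, at the two branch points in the sphere case, $d\phi$ must be nonzero to guarantee an immersion; a concrete choice such as $\phi(z)=\epsilon\, z/(1+|z|^2)$ on $\mathbb{CP}^1$ (and $\phi(x,y)=\epsilon\, e^{2\pi i x}$ for the $m$-fold torus cover $(x,y)\mapsto(mx,y)$) does the job.

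As for the relation to \cite{L1}: that paper predates the symplectic-genus formalism of \cite{LL2}, so its proof is a direct surface construction rather than an appeal to Theorem~\ref{t:rat}. Your route is shorter because Theorem~\ref{t:rat} packages the hard existence step; the trade-off is that you must still handle the non-primitive square-zero classes $m(H-E_1)$ and, for $k=9$, $m(3H-\sum E_i)$ by hand, which you do correctly. Note that for these one could equally take $m$ parallel fibers and tube (sphere case) or circle-sum (torus case, cf.\ \cite{LL3} and Section~4 of the present survey), which is closer in spirit to the constructions in \cite{L1}.
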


B.H.-Li \cite{L1} also produced an interesting result for multiple classes.

\begin{lemma}[Prop. 5, \cite{L1}]  If a reduced class $A$ with $A\cdot A\ge 0$ has $g_m(A)=\eta(A)$, then, for any $d>0$, $g_m(dA)=\eta(dA)$.
\end{lemma}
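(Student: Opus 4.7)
Plan. The argument separates into matching lower and upper bounds for $g_m(dA)$.

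For the lower bound, observe first that $dA$ is reduced whenever $A$ is (the defining inequalities are homogeneous), so $\eta(dA) = \tfrac{1}{2}(K_{st}\cdot dA + dA\cdot dA)+1$ by the explicit formula for reduced classes. Lemma \ref{l:sg} then yields $g_m(dA) \ge \eta(dA)$ directly, as $\CP^2\#k\overline{\CP^2}$ has non-empty symplectic cone.

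For the upper bound, I would build a connected embedded surface of genus exactly $\eta(dA)$ representing $dA$. The key observation is that for a reduced class $A$ with $A\cdot A\ge 0$ and $k\le 9$, the reduced conditions force $3a\ge \sum_i b_i$, hence $K_{st}\cdot A\le 0\le A\cdot A$; equivalently, $A\cdot A\ge \eta(A)-1$. This is precisely the hypothesis of Theorem \ref{t:rat}, which then produces a connected symplectic surface $\Sigma$ representing $A$ with $g(\Sigma)=\eta(A)$. Now take $d$ small symplectic perturbations $\Sigma_1,\dots,\Sigma_d$ of $\Sigma$; by positivity of symplectic intersection, each pair meets transversely in exactly $A\cdot A$ positive double points. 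Symplectically smoothing all $\binom{d}{2}(A\cdot A)$ double points via the standard local model yields a symplectic surface $\Sigma_d$ representing $dA$, automatically connected when $A\cdot A>0$ (every pair of original copies is joined through a resolved crossing). A short Euler-characteristic computation based on $\chi(\Sigma_d) = d(2-2\eta(A)) - d(d-1)(A\cdot A)$, combined with the identity $K_{st}\cdot A = 2\eta(A)-A\cdot A-2$, verifies $g(\Sigma_d) = \eta(dA)$, matching the lower bound.

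The main obstacle is the degenerate case $A\cdot A=0$ with $d\ge 2$: the $d$ parallel symplectic copies are automatically disjoint, no smoothing occurs, and since $dA$ is non-primitive, Theorem \ref{t:rat} cannot be re-invoked on $dA$ directly. Fortunately, for $k\le 9$ the hypothesis $g_m(A)=\eta(A)\ge 0$ together with $A\cdot A=0$ restricts $A$ (up to $D(M)$-equivalence) to a very short explicit list of fiber-type classes, essentially the ruling fibers $H-E_1$ and the elliptic fiber class $3H - E_1 - \cdots - E_9$ in $\CP^2\#9\overline{\CP^2}$. For each such $A$, a connected representative of $dA$ of genus $\eta(dA)$ is constructed directly from the underlying ruling or elliptic fibration structure, for example via a suitable $d$-fold branched multisection or by exploiting multiple-fiber constructions after an appropriate deformation.
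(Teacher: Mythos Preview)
The survey does not prove this lemma; it merely cites Proposition~5 of \cite{L1}. So there is no proof in the paper to compare against, and I will assess your argument directly.

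Your lower bound and your genus computation for the resolved surface are both correct. The detour through Theorem~\ref{t:rat}, however, is unnecessary and costs you generality: you invoke it to obtain a \emph{symplectic} representative, and to verify its hypothesis $A\cdot A\ge\eta(A)-1$ you impose $k\le 9$. But the lemma carries no such restriction, and the hypothesis $g_m(A)=\eta(A)$ already hands you a smooth embedded surface $\Sigma$ of the right genus. Work in a tubular neighbourhood: since the normal bundle has Euler number $n=A\cdot A\ge 0$, one can choose a smooth section $s$ with exactly $n$ simple zeros, all of index $+1$; the pushoffs $\Sigma_k$ via small generic perturbations of $(k-1)s$ then meet pairwise in exactly $n$ positive points, and your Euler-characteristic count goes through verbatim for every $k$. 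This is presumably closer to Li's original argument, which predates the symplectic-genus machinery.

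There is a genuine gap in your treatment of $A\cdot A=0$. Take $A=H-E_1$: it is reduced, $A^2=0$, and $g_m(A)=\eta(A)=0$, so the hypothesis is satisfied. But then
\[
\eta(dA)=\tfrac12\bigl(K_{st}\cdot dA+(dA)^2\bigr)+1=\tfrac12(-2d)+1=1-d,
\]
which is negative for $d\ge 2$, while $g_m(dA)\ge 0$ always. No ``$d$-fold branched multisection'' can produce a surface of negative genus; your proposed construction simply cannot exist here. The conclusion $g_m(dA)=\eta(dA)$ is therefore false as written for this $A$. Either the statement transcribed in the survey is missing a hypothesis present in \cite{L1} (e.g.\ $\eta(A)\ge 1$, equivalently $K_{st}\cdot A\ge -A\cdot A$, which is exactly what allows a circle-sum of $d$ parallel copies to reach genus $d\,\eta(A)-(d-1)=\eta(dA)$ when $A^2=0$), or the intended conclusion is $g_m(dA)=\max\{0,\eta(dA)\}$. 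Your write-up should flag this discrepancy rather than gesture at fibration constructions that cannot deliver the claimed genus.
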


For when $k\ge 10$, there are still very few results.  One result, obtained by B.H-Li and then used by Zhao-Gao-Qiu \cite{ZGQ} to develop an explicit formula, is the following:

\begin{lemma}[Prop. 4, Prop. 6, \cite{L1}] Let $A\ne 0$ be a reduced class with $b_i\le 2$ for $i\ge 10$.  Then $g_m(A)=\eta(A)$ if either
\begin{enumerate}
\item  $\eta(A)=1$, or
\item $\eta(A)\ge 2$, $A\cdot A\ge 0$, and $b_{10}>0$.
\end{enumerate}

\end{lemma}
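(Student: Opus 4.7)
The lower bound $g_m(A)\ge \eta(A)$ is immediate from Lemma~\ref{l:sg}(1), since $\eta(A)\ge 1>0$ in both cases. So the entire task is to construct a connected embedded surface in $M$ of genus exactly $\eta(A)$ representing $A$, and my plan is to reduce to the $k=9$ case covered by Lemma~\ref{l:sh1}. I would write $M=M'\,\#\,(k-9)\overline{\mathbb{CP}^2}$ with $M'=\mathbb{CP}^2\#9\overline{\mathbb{CP}^2}$ and split $A=A'-\sum_{i\ge 10}b_iE_i$, where $A'=aH-\sum_{i=1}^{9}b_iE_i\in H_2(M',\mathbb{Z})$. Since $a\ge 0$, $b_1\ge\cdots\ge b_9\ge 0$, and $a\ge b_1+b_2+b_3$ are all inherited from $A$ being reduced in $M$, the class $A'$ is reduced in $M'$. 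A short computation using $\eta(A)=\eta_{K_{st}}(A)=1+\tfrac{1}{2}(K_{st}\cdot A+A\cdot A)$ for reduced classes (Lemma~3.4 of \cite{LL2}) together with the identity $b_i-b_i^2\in\{0,-2\}$ for $b_i\in\{0,1,2\}$ yields
$\eta(A)=\eta(A')-n_2$, where $n_2:=\#\{i\ge 10:b_i=2\}$. In both cases this forces $\eta(A')\ge n_2+1$, so Lemma~\ref{l:sh1} applied to $A'$ in $M'$ produces a connected embedded surface $\Sigma'\subset M'$ of genus $\eta(A')$ representing $A'$.

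The next step is to promote $\Sigma'$ to an embedded connected surface in $M$ of the desired genus by performing the $k-9$ remaining blow-ups at distinct auxiliary points $p_{10},\ldots,p_k\in M'$. For an index $i$ with $b_i=1$, I would take $p_i$ to be a smooth point of $\Sigma'$ and blow up: the proper transform is smoothly embedded, has the same genus, and represents $[\Sigma']-E_i$. For an index $i$ with $b_i=2$, I would first isotope $\Sigma'$ so that a small annular neighbourhood of one of its nonseparating simple closed curves lies inside a small ball around $p_i$, then let the core circle of that annulus collapse to the point $p_i$; this produces a singular surface in $M'$ of geometric genus one less than $\Sigma'$ with an ordinary transverse node at $p_i$, representing the same homology class as $\Sigma'$. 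Blowing up $M'$ at $p_i$ separates the two local sheets at the node so that they hit the exceptional divisor at distinct points, and the proper transform is smooth, connected, has genus one less, and represents $[\Sigma']-2E_i$. For indices with $b_i=0$, I would blow up at a generic point disjoint from the surface. The inequality $\eta(A')\ge n_2+1$ allows me to choose $n_2$ mutually disjoint nonseparating curves on $\Sigma'$ confined to disjoint balls, plus enough smooth points for the $b_i=1$ moves, so the operations can be performed commutatively. Iterating yields $\Sigma\subset M$ connected, embedded, of genus $\eta(A')-n_2=\eta(A)$, and representing $A$.

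The main obstacle is making the pinching step for $b_i=2$ geometrically rigorous: one has to verify that a smooth embedded surface in a $4$-manifold can be deformed through a singular intermediate into an immersed surface with a single prescribed transverse node, and that the subsequent proper transform in the blow-up is smoothly embedded of the predicted genus and class. This is exactly the point at which the hypothesis $b_i\le 2$ becomes essential, since higher multiplicities would require resolving more complicated local singularities whose effect on the genus is not captured by the simple formula $b_i-b_i^2$. The hypothesis $b_{10}>0$ in case~(2) ensures the reduction to $M'$ is genuine (otherwise $b_i=0$ for all $i\ge 10$, so $A$ already lies in $H_2(M',\mathbb{Z})$ and Lemma~\ref{l:sh1} applies in $M'$ directly), while the assumption $A\cdot A\ge 0$ enters indirectly by forcing $\eta(A')$ to be large enough to accommodate all $n_2$ pinchings while leaving a positive-genus connected surface at the end.
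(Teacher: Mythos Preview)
The survey does not supply its own proof of this lemma; it is simply quoting Propositions~4 and~6 of~\cite{L1}. So there is no ``paper's proof'' to compare against, and your attempt must be judged on its own.

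Your overall strategy is correct and natural: truncate to $A'\in H_2(M',\mathbb Z)$ with $M'=\mathbb{CP}^2\#9\overline{\mathbb{CP}^2}$, invoke Lemma~\ref{l:sh1} to realise $A'$ by a genus-$\eta(A')$ surface, then blow up. The genus bookkeeping $\eta(A)=\eta(A')-n_2$ is right, and the $b_i\in\{0,1\}$ blow-ups are routine. The substantive step is the $b_i=2$ case. Your description (``collapse a nonseparating circle to a node, then blow up'') is the correct picture, but as you acknowledge, the justification is incomplete. The difficulty is real: the annular neighbourhood of a nonseparating $\gamma\subset\Sigma'$ has tubular neighbourhood $S^1\times[-1,1]\times D^2$, which is \emph{not} a $4$-ball, so one cannot directly substitute the local model $\{zw=\epsilon\}\rightsquigarrow\{zw=0\}$. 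One genuinely needs the simple-connectivity of $M'$ to find an embedded disc $D$ with $\partial D=\gamma$, and then to control the interior intersections $D\cap\Sigma'$ so that ambient surgery produces an immersed surface of genus $\eta(A')-1$ with a single positive node rather than a cloud of extra double points. This is a known manoeuvre in $4$-manifold topology (and is what underlies Li's construction), but it is not the one-line isotopy you sketch; you should either cite the relevant lemma or work through the surgery carefully.

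A second, more minor point: your closing paragraph misidentifies where the hypotheses $A\cdot A\ge 0$ and $b_{10}>0$ enter. In fact your argument never uses them --- the inequality $\eta(A')\ge n_2+1$ follows from $\eta(A)\ge 1$ alone, and reduced classes in $M'$ automatically have $(A')^2\ge 0$, so Lemma~\ref{l:sh1} applies regardless. Your proof, once the pinching step is made rigorous, actually yields the stronger conclusion $g_m(A)=\max\{0,\eta(A)\}$ for every nonzero reduced $A$ with $b_i\le 2$ for $i\ge 10$. The extra hypotheses in the stated lemma are artefacts of how the two separate propositions in~\cite{L1} were phrased, not genuine constraints on the method.
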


Note the restriction on the coefficients $b_i$ for $i\ge 10$.

In the simplified case, the key is the orbit structure from Lemma \ref{l:main} and Theorem 2 in \cite{LiLi}, as well as an explicit construction.

\begin{theorem}[\cite{LL4}, \cite{D2}] Let $M=\mathbb CP^2\#k\overline{\mathbb CP^2}$ ($k\ge 3$) and $A\in H_2(M,\mathbb Z)$ with $A^2<0$.  Assume that $A$ is primitive ordinary and that the $D(M)$-orbit $O_A$ of $A$ contains a simplified class; then it has minimal genus $g_A=0$.
Moreover, for each class, there exists a complex orientation-compatible structure on $M$ such that the surface can be chosen to be holomorphic with a symplectic orientation-compatible structure on $M$ such that the surface is symplectic.

\end{theorem}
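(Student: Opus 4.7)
The plan is to reduce the problem to constructing a single holomorphic rational curve in a well-chosen representative of $O_A$, then appeal to the uniqueness from Lemma \ref{l:main}. Since the minimal genus, together with representability by a holomorphic (respectively symplectic) curve in some orientation-compatible complex (respectively symplectic) structure on $M$, is invariant under the $D(M)$-action on $H_2(M,\mathbb{Z})$ (via pullback of the structure by a diffeomorphism inducing the lattice automorphism), it suffices to find one class $A' \in O_A$ realized by a smooth holomorphic sphere in some complex structure on $M$ compatible with the orientation. Because $M$ is a blow-up of $\mathbb{CP}^2$ and hence K\"ahler, any such holomorphic curve is automatically symplectic with respect to the corresponding K\"ahler form, settling the ``moreover'' clause at the same time as the genus statement.

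By Lemma \ref{l:main}(1), $O_A$ is the \emph{unique} primitive ordinary $D(M)$-orbit of square $-n$ that contains a simplified class. Hence the plan further reduces to the following: for each pair $(k,n)$ with $k \ge 3$ and $n > 0$, exhibit a primitive ordinary class $A'$ with $(A')^2 = -n$ that is (i) $D(M)$-equivalent to some simplified class, and (ii) represented by a smooth rational curve in some blow-up of $\mathbb{CP}^2$ diffeomorphic to $M$. The verification of (i) can be carried out algorithmically using Lemma \ref{l:class}.

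For step (ii), the basic building blocks are classes of the form $E_{i_0} - E_{i_1} - \cdots - E_{i_s}$, represented by smooth rational curves obtained as the proper transform of an exceptional divisor after iteratively blowing up $s$ points lying on it (each blow-up lowering the self-intersection by one, and the rationality being preserved throughout). To reach arbitrary $n$, one takes proper transforms of singular rational plane curves of degree $d \ge 2$ whose singularities lie at the blow-up centers, appealing to classical results on Severi varieties for existence; the circle sum construction of \cite{LL3} mentioned earlier gives a complementary way to combine simpler representatives into more complicated ones within a fixed orbit. The degrees and multiplicities are tuned so that the resulting class $A'$ is primitive and not characteristic.

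The main obstacle is producing this construction uniformly across $(k,n)$: when $n$ is large compared to $k$, the exceptional classes $E_i$ alone cannot achieve self-intersection $-n$, and one must work with higher-degree plane curves while carefully controlling their singularity types, all the while arranging primitivity, non-characteristicness, and equivalence to a simplified class. Once such an $A'$ is constructed and shown to be primitive ordinary with $(A')^2 = -n$ and equivalent to a simplified class, Lemma \ref{l:main}(1) forces $A' \in O_A$, and the holomorphic sphere constructed above then serves as the desired representative for $A$ after transporting by a diffeomorphism realizing the $D(M)$-equivalence.
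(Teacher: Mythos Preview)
Your overall strategy---reduce via $D(M)$-invariance and the orbit uniqueness of Lemma~\ref{l:main} to constructing a single holomorphic sphere in one representative per $(k,n)$, then transport structures by diffeomorphism---is exactly what the paper indicates (``the key is the orbit structure from Lemma~\ref{l:main} and Theorem~2 in \cite{LiLi}, as well as an explicit construction''). The use of iterated proper transforms of exceptional curves to realize classes such as $E_{i_0}-E_{i_1}-\cdots-E_{i_s}$ by smooth rational curves is the right kind of explicit input, and the K\"ahler observation handles the symplectic clause correctly.

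There is, however, one genuine misstep. The circle sum of \cite{LL3} cannot be used here: as recalled in the survey, it requires both input surfaces to have \emph{positive} genus (one removes a neighbourhood of a homologically nontrivial circle on each), and it produces a surface of genus $g_1+g_2-1\ge 1$. It is therefore irrelevant to building spheres, and your mention of it as a ``complementary way to combine simpler representatives'' should be dropped.

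Beyond that, your proposal is a plan rather than a proof: you correctly identify the ``main obstacle''---producing, for every $(k,n)$, a primitive ordinary class of square $-n$ lying in a simplified orbit together with an explicit smooth rational representative---but you do not carry it out. The appeal to Severi varieties for nodal plane curves is plausible but vague; you would still need to verify, case by case, that the resulting proper transform is primitive, ordinary, and $D(M)$-equivalent to a simplified class (via the algorithm of Lemma~\ref{l:class}). In the cited sources this is done concretely: \cite{LiLi} (Theorem~2) and \cite{LL4} handle the small cases, and \cite{D2} extends the construction to all simplified orbits. Without that explicit work your argument remains a correct outline, not a complete proof.
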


If $A$ is primitive characteristic, things are a bit more complicated.  In particular, the minimal genus can only be determined for sufficiently many blow-up; which is a stabilization-type result.

\begin{theorem}  [\cite{LL4}, \cite{D2}]  Let $M=\mathbb CP^2\#k\overline{\mathbb CP^2}$ ($k\ge 3$) and $A\in H_2(M,\mathbb Z)$ with $A^2<0$.  Assume that $A$ is primitive characteristic, that $A^2=-(8\gamma+k-1)$ ($\gamma>0$) and that the $D(M)$-orbit $O_A$ of $A$ contains a simplified class.
\begin{enumerate}
\item If $1\le k\le 2\gamma-1$, then the minimal genus of any embedded surface representing $A$ is bounded above by $2\gamma-k$.
\item If $\gamma$ is odd and $k\ge 2\gamma-1$, then the minimal genus of any embedded surface representing $A$ is 1.
\item If $\gamma$ is even and $k\ge 2\gamma$, then the minimal genus of any embedded surface representing $A$ is 0.
\end{enumerate}
\end{theorem}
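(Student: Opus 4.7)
The plan is to combine the uniqueness of the primitive characteristic simplified $D(M)$-orbit from Lemma~\ref{l:main}(2) with an explicit low-genus construction, and to close the two equality cases with a Rokhlin-type obstruction. Since $A$ is primitive characteristic with $A^2=-(8\gamma+k-1)$ and $O_A$ contains a simplified class, Lemma~\ref{l:main}(2) asserts that $O_A$ is the unique primitive characteristic $D(M)$-orbit of this square. Because the minimal genus is a $D(M)$-invariant, it suffices to produce an embedded surface of the advertised genus for any convenient representative $A_0\in O_A$ and to prove the lower bound for that single class.

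A natural starting point is
\[
A_0=(2\gamma-1)H-(2\gamma+1)E_1-E_2-\cdots-E_k,
\]
which is primitive, has all odd entries (hence characteristic), and satisfies $A_0^2=(2\gamma-1)^2-(2\gamma+1)^2-(k-1)=-(8\gamma+k-1)$. A short sequence of Cremona moves together with sign changes on the $E_i$'s, as in the algorithm of Lemma~\ref{l:class}, brings $A_0$ into the simplified chamber, verifying that $O_{A_0}=O_A$. The upper bound in each range of $(\gamma,k)$ would then be realized geometrically: in case~(3), $\gamma$ even and $k\ge 2\gamma$, by blowing up $k$ prescribed points on a rational curve of degree $2\gamma-1$ in $\mathbb{CP}^2$ with controlled multiplicities so that the proper transform realizes the Cremona-reduced form of $A_0$ and has genus zero by adjunction; in case~(2), $\gamma$ odd and $k\ge 2\gamma-1$, by the parallel construction producing a torus instead of a sphere; and in case~(1), $1\le k\le 2\gamma-1$, by applying the circle-sum construction of~\cite{LL3} to append $2\gamma-k$ extra handles to the blow-up transform, compensating for the shortfall in exceptional divisors.

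For the lower bound in case~(2), I would invoke the Rokhlin--Kervaire--Milnor obstruction: if a characteristic class is represented by a smooth sphere then $A^2\equiv\sigma(M)\pmod{16}$. Here
\[
A^2-\sigma(M)=-(8\gamma+k-1)-(1-k)=-8\gamma,
\]
which is congruent to $8$ modulo $16$ exactly when $\gamma$ is odd. Hence no spherical representative of $A$ exists in case~(2), forcing $g_m(A)\ge 1$ and matching the torus constructed above. In case~(3) the trivial bound $g_m\ge 0$ is attained by the sphere.

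The main obstacle will be the construction in case~(1). With $k$ small relative to $\gamma$, the blow-up budget is too tight to realize $A_0$ as a proper transform of a low-genus plane curve (the naive required multiplicity at the blown-up point already exceeds the available degree), and the extra handles must be attached by circle sums while preserving primitivity, the homology class, and the simplified-orbit hypothesis. I expect the cleanest formulation to be a downward induction on $k$ for fixed $\gamma$, anchored at the base case $k=2\gamma-1$ supplied by case~(2), with each step adding a single handle through a local circle-sum surgery; the delicate technical verification is that Lemma~\ref{l:main}(2) remains applicable at every intermediate stage so that the constructed representative really lies in $O_A$.
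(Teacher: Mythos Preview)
The paper is a survey and does not itself prove this theorem; it cites \cite{LL4} and \cite{D2} and offers only the one-line hint (just above the companion primitive-ordinary theorem) that the argument combines the orbit structure of Lemma~\ref{l:main}, Theorem~2 of \cite{LiLi}, and an explicit construction. So there is no detailed proof in the paper to compare against. Your overall architecture---reduce to a single $D(M)$-orbit via Lemma~\ref{l:main}, exhibit a representative of the advertised genus, and close case~(2) with the Rokhlin--Kervaire--Milnor congruence---matches that hint, and your computation $A^2-\sigma(M)=-8\gamma\equiv 8\pmod{16}$ for $\gamma$ odd is correct and is exactly the obstruction to a sphere in case~(2).

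Two genuine gaps remain in the constructions. First, your representative $A_0=(2\gamma-1,\,2\gamma+1,\,1,\dots,1)$ has $b_1>a$, so it can never arise as the proper transform of an irreducible plane curve of degree $2\gamma-1$; you flag this for case~(1), but the same obstruction applies verbatim in cases~(2) and~(3). You therefore must work with the actual simplified (Cremona-reduced) form and build the surface from pieces already living in $M$, not from a single blown-up plane curve. Second, and more seriously, the proposed ``downward induction on $k$'' in case~(1) is ill-posed: passing from $k$ to $k-1$ changes the ambient manifold, so a circle sum performed in $M_k$ cannot be anchored at a torus that lives in $M_{2\gamma-1}$, and blowing down an exceptional sphere meeting a surface transversally in one point does not change the genus. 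The constructions in \cite{LL4} and \cite{D2} work directly in $M_k$ for each fixed $k$, assembling the surface from explicitly embedded spheres and tori; your inductive scheme would need to be replaced by such a direct construction.
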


The explicit classes in each category for $0>A\cdot A>-17$, $3\le k\le 9$ were calculated in \cite{LL2} and \cite{LL4}.  This was extended to all simplified classes in \cite{D2}.

Reduced classes with $A\cdot A<0$ are not understood at the current time.

\subsection{Irrational Ruled Manifolds}

Let $M$ be irrational ruled.  This means that $M$ is diffeomorphic to either a trivial $S^2$-bundle over a surface $\Sigma_g$ of genus $g$, the twisted $S^2$ bundle over $\Sigma_g$, denoted by $N$,  or a blow-up of either of these base cases.  The minimal genus problem for the first two cases is described in Theorems 20 and 21 of \cite{L}.  Blow-ups of the two base cases are diffeomorphic.  

In what follows, assume that $M=N\#k\overline{\mathbb CP^2}$ and $k\ge 1$.  Let $U$ be the section class and $T$ the fiber class, and call the basis $\{U,T, E_1,...,E_k\}$ the standard basis of $H_2(M,\mathbb Z)$ if 
\[
U\cdot U=U\cdot T=-E_i\cdot E_i=1
\mbox{  
and  
}
T\cdot T=U\cdot E_i=T\cdot E_i=E_i\cdot E_j=0
\]
for $i\ne j$.  The standard canonical class $K_{st}$ will denote the class $-2U+(2g-1)T+\sum E_i$.

Consider first the class $A=bT$.  This class always has minimal genus 0 by tubing the b copies of the fiber.  Moreover, the $D(M)$ orbit of this class consists of two elements: $bT$ and $-bT$.  In what follows, this class will be excluded.  The following definition is based on the results in \cite{ZG}:

\begin{definition}For the class $A=aU+bT-\sum_{i=1}^k c_iE_i\in H_2(M,\mathbb Z)$, assume that $A\ne bT$, $k\ge 1$ and 
\[
 c_1\ge ...\ge c_k\ge 0.
\]
 \begin{enumerate}
\item The class $A$ is called {\it reduced} if
\[
a\ge \left\{\begin{array}{ll} c_1 & k=1,\\ c_1+c_2 & k\ge 2.\end{array}\right.
\]

\item The class $A$ is called simplified if $a=0$ and $b$ is the minimal non-negative value in the $D(M)$ orbit containing $A$.
\end{enumerate}  
\end{definition}

It is now possible to mimic the results for rational manifolds.  In particular, analogous results hold for all statements up to Lemma \ref{l:class}.  In particular, reduced and simplified orbits do not overlap, and simplified classes always have negative self-intersection, in a fashion analogous to the rational case.

\begin{lemma}[\cite{LL5}, \cite{ZG}]\label{l:rclass}
Let $M=N\#k\overline{\mathbb CP^2}$, $k\ge 1$  and $A\in H_2(M,\mathbb Z)$ with $A\ne bT$.  
\begin{enumerate}
\item $A$ is $D(M)$-equivalent to a unique reduced or simplified class.
\item Each orbit $O_A$ contains either a simplified class or a reduced class, never both.  There is an efficient algorithm to find a representative in each orbit.

\end{enumerate}
\end{lemma}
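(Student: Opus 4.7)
The plan is to transport the proof of the rational-manifold analogue (Lemma \ref{l:class}) to the irrational ruled setting, using the explicit generating set for $D(M)$ identified by Li--Li \cite{LL5} together with the reflection-group analysis of Zhao--Gao \cite{ZG}. Writing $A = aU + bT - \sum_{i=1}^k c_i E_i$, the intersection form on $\langle U, T, E_1,\ldots,E_k\rangle$ splits as the $U$-$T$ block $\begin{pmatrix}1 & 1 \\ 1 & 0\end{pmatrix}$ and the standard negative-diagonal $E$-block, giving signature $(1,k+1)$. The relevant elements of $D(M)$ are then permutations $E_i \leftrightarrow E_j$, sign changes $E_i \mapsto -E_i$, reflections on $(-2)$-classes such as $E_i - E_j$ and $T - E_i - E_j$, and a further generator implementing the blow-up/blow-down move $E_i \mapsto T - E_i$, which together play the role that the Cremona transformation plays in the rational case.

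For the existence half of part (1), I would describe an explicit reduction algorithm. Given $A$, first apply permutations and sign changes to arrange $c_1 \ge \cdots \ge c_k \ge 0$. If $a \ge c_1$ (when $k=1$) or $a \ge c_1 + c_2$ (when $k \ge 2$), the class is already reduced and we stop. Otherwise the violating inequality singles out a specific generator chosen to strictly decrease a prescribed complexity measure---a natural candidate is the lexicographic pair $(|a|,\, c_1+c_2)$, with the decrease realized by the appropriate reflection or blow-down move. Since each step strictly decreases the complexity in $\mathbb Z_{\ge 0}^2$, the algorithm terminates after finitely many iterations, leaving either a reduced class or one with $a = 0$. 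In the latter case the remaining subgroup acts on the $(bT, c_i E_i)$ data by a finite extension of the Weyl group on the $E$-block together with $b \mapsto -b$, and a finite search picks out the minimal non-negative value of $b$---the simplified representative.

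For uniqueness, the set of reduced classes lies in a fundamental chamber for an appropriate subgroup of $D(M)$, so two reduced classes in the same orbit must coincide (adapting the argument of \cite{LL5, ZG}); uniqueness of the simplified representative is built into the minimality of $b$. Disjointness of reduced and simplified orbits in part (2) is a direct observation: if $A$ is simultaneously reduced and simplified, then $a = 0$ together with the standing assumption $c_1 \ge \cdots \ge c_k \ge 0$ and the reducedness constraint forces $c_1 + c_2 \le 0$ (or $c_1 \le 0$ when $k=1$), hence all $c_i = 0$ and $A = bT$, which is excluded by hypothesis. Efficiency of the algorithm is then immediate from the strict drop of the complexity at each step.

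The main obstacle is pinning down a complexity function that (i) is well-ordered below, (ii) is strictly decreased by at least one of the prescribed generators whenever $A$ is neither reduced nor has $a = 0$, and (iii) interacts correctly with the off-diagonal $U \cdot T = 1$ entry, which makes the action of $r_{T - E_i - E_j}$ on the $a$-coefficient differ noticeably from its rational-case analogue on the $H$-coefficient. Verifying that each generator in the chosen list lies in $D(M)$ rather than merely in $O(L)$---i.e. is realized by an orientation-preserving self-diffeomorphism---is the other technical point where the input from \cite{LL5} is essential, since in the irrational ruled case $D(M)$ is a proper subgroup of $O(L)$ once $g \ge 1$.
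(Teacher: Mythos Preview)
The paper does not supply its own proof of Lemma~\ref{l:rclass}; it is stated with citations to \cite{LL5} and \cite{ZG}, so there is no in-text argument to compare against. Your overall strategy---choose a complexity function, run a reflection-based reduction, and appeal to a fundamental-chamber argument for uniqueness---is the standard one and matches what those references do.

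There is, however, a genuine gap in your disjointness argument for part~(2). You argue that a single class cannot be simultaneously reduced and simplified (since $a=0$ and $a \ge c_1+c_2 \ge 0$ force all $c_i=0$, hence $A=bT$). But the claim is about \emph{orbits}: you must rule out an orbit $O_A$ containing a reduced class $R$ and a \emph{distinct} simplified class $S$, with neither individually being both. Since reduced classes in the irrational ruled setting can have negative self-intersection (take $b$ very negative), you cannot separate the two types by the sign of $A\cdot A$ alone, and your pointwise argument does not exclude $R \neq S$ lying in a common orbit. The actual argument requires something more---for instance, tracking how the reduction algorithm behaves when started from a simplified class versus a reduced class and showing the two terminal states cannot be $D(M)$-related, or identifying an orbit invariant that distinguishes them.

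A smaller issue: your uniqueness claim for the simplified representative says it is ``built into the minimality of $b$,'' but the definition only pins down $b$; you still need to argue that the $c_i$ (ordered and non-negative) are determined by the orbit once $a=0$ and $b$ is fixed. This amounts to checking that the stabilizer of $\{a=0,\, b=b_0\}$ inside $D(M)$ acts on the $E_i$-coordinates only by signed permutations, which is plausible but should be stated and verified.
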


 Moreover, the following Theorem continues to hold:

\begin{theorem}[Theorem B, Theorem C, \cite{LL2}] \label{t:irrat} Let $A\in H_2(M,\mathbb Z)$ and assume that $A$ is primitive if $A\cdot A=0$.  If $A\cdot A\ge \eta(A)-1$, then $A$ is represented by a connected, embedded symplectic surface.  If $A\cdot A\ge 0$, then $\eta(A)$ can be calculated using $K_{st}$.
\end{theorem}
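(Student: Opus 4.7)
The plan is to parallel the proof of Theorem \ref{t:rat} in the rational case, combining the $D(M)$-reduction of Lemma \ref{l:rclass} with a $b^+=1$ Seiberg--Witten/Gromov existence argument.

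First, since both the minimal and symplectic genera are $D(M)$-invariant (Lemma \ref{l:sg}(2)) and the hypothesis $A\cdot A\ge \eta(A)-1$ is itself $D(M)$-invariant, Lemma \ref{l:rclass} permits us to assume $A$ is reduced or simplified. The excluded class $A=bT$ admits a sphere representative by tubing $|b|$ fiber copies; under the primitivity hypothesis only $b=\pm 1$ arises, which is immediate. Since simplified classes have negative self-intersection by definition, the second assertion of the theorem concerns only reduced classes.

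Second, I verify $\eta(A)=\frac{1}{2}(K_{st}\cdot A+A\cdot A)+1$ for a reduced class $A=aU+bT-\sum c_iE_i$ with $A\cdot A\ge 0$. From the classification of symplectic canonical classes on irrational ruled manifolds and their blow-ups, every $K\in\mathcal K$ is $D(M)$-equivalent to one of the form $-2U+(2g-1)T+\sum\epsilon_iE_i$ with $\epsilon_i\in\{\pm 1\}$. A direct calculation gives $K\cdot A=(2g-3)a-2b+\sum \epsilon_i c_i$; because $c_i\ge 0$ under the reduced condition, this pairing is maximized over $\mathcal K$ by taking every $\epsilon_i=+1$, namely by $K_{st}$. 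To place $K_{st}$ in $\mathcal K_A$ I then exhibit a symplectic form $\omega$ with $K_\omega=K_{st}$ and $[\omega]\cdot A>0$; this uses the explicit description of the symplectic cone of $N\#k\overline{\mathbb CP^2}$ together with the reduced inequality $a\ge c_1+c_2$.

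Third, to produce a connected embedded symplectic representative of $A$ under the hypothesis $A\cdot A\ge \eta(A)-1$, pick $\omega$ with $K_\omega=K_{st}$ and $[\omega]\cdot A>0$. The formal dimension of the moduli space of embedded $J$-holomorphic curves of genus $\eta(A)$ in class $A$ equals $A\cdot A-K_{st}\cdot A=2(A\cdot A-\eta(A)+1)\ge 0$ by hypothesis. The wall-crossing analysis of $b^+=1$ Seiberg--Witten invariants combined with Taubes' $SW=Gr$ correspondence yields a non-empty Gromov moduli space in the symplectic chamber, and positivity of intersections (supplemented by the primitivity assumption when $A\cdot A=0$) ensures the representative can be chosen connected and smoothly embedded. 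The main obstacle is Step 2: identifying the symplectic canonical classes and verifying that $K_{st}$ is extremal for reduced classes requires a detailed analysis of the symplectic cone and of the $D(M)$-action on $\mathcal K$ for $N\#k\overline{\mathbb CP^2}$, going beyond what is needed in the rational setting; Step 3 is a direct transcription of the argument from the rational case.
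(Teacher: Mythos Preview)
This paper is a survey, and Theorem \ref{t:irrat} is merely quoted from \cite{LL2} without proof; the only proof-related comment anywhere near it is the sentence following the parallel Theorem \ref{t:rat}: ``Observe that the inequality in the Lemma precisely ensures that the corresponding moduli spaces are non-empty.'' There is therefore no argument in the paper against which to compare your proposal in detail.

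That said, your outline matches both that hint and the strategy of the cited source \cite{LL2}: reduce via $D(M)$ using Lemma \ref{l:rclass}, show that for a reduced class with $A\cdot A\ge 0$ the maximum in the definition of $\eta$ is attained at $K_{st}$, and then use the $b^+=1$ wall-crossing formula together with Taubes' $SW=Gr$ to obtain a nonempty Gromov moduli space once the expected dimension $A\cdot A-K_{st}\cdot A=2(A\cdot A-\eta(A)+1)$ is nonnegative. One small caution on Step 2: for irrational ruled $M$ with base genus $g\ge 1$ the set $\mathcal K$ is much more rigid than in the rational case (the minimal model has a unique symplectic canonical class), so the ``detailed analysis of the $D(M)$-action on $\mathcal K$'' you anticipate is in fact simpler here, not harder; the only freedom is in the signs of the exceptional classes, exactly as you wrote. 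Otherwise your sketch is what one expects the proof in \cite{LL2} to look like.
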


Zhao-Gao \cite{ZG} provided an explicit formula for the  reduced class associated to any $A\in H_2(M,\mathbb Z)$.  They then determined the minimal genus for certain classes using this reduction and an explicit construction while making use of the symplectic genus.

\begin{theorem}[Theorem 1.5, Theorem 1.6, \cite{ZG}]  Let $A=aU+bT-\sum_{i=1}^k c_iE_i$ with $A\cdot A\ge 0$.
\begin{enumerate}
\item $A$ is represented by an embedded sphere if and only if $a=0$.
\item If $a\ne 0$, then 
\[
\eta(A)\ge |a|(g-1)+1.
\]
\item If $A$ is reduced, $n\le 4$ and $b\ge 0$, then $g_m(A)=\eta(A)$.
\end{enumerate}

\end{theorem}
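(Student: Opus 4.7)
The plan is to address the three claims in turn: (1) via the ruling projection, (2) via Theorem \ref{t:irrat} followed by a $D(M)$-reduction and a short algebraic manipulation, and (3) via Lemma \ref{l:sg}(1) for the lower bound combined with an explicit construction for the upper bound.

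For (1), I would use the ruling projection $p\colon M \to \Sigma_g$. On homology, $p_*U = [\Sigma_g]$ while $p_*T = p_*E_i = 0$, so $p_*A = a[\Sigma_g]$. If $\Sigma \cong S^2$ represents $A$, then $p|_\Sigma \colon S^2 \to \Sigma_g$ is null-homotopic (since $\pi_2(\Sigma_g) = 0$ for $g \ge 1$), forcing $a = 0$. Conversely, if $a = 0$ and $A \cdot A \ge 0$, then $-\sum c_i^2 \ge 0$ together with the convention $c_i \ge 0$ forces every $c_i = 0$, so $A = bT$; tubing $|b|$ disjoint parallel fibers produces an embedded sphere in this class.

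For (2), Theorem \ref{t:irrat} yields $\eta(A) = \tfrac12(K_{st}\cdot A + A \cdot A) + 1$, and direct computation with $K_{st} = -2U + (2g-1)T + \sum E_i$ gives $K_{st}\cdot A = (2g-3)a - 2b + \sum c_i$ and $A \cdot A = a^2 + 2ab - \sum c_i^2$. Since $a = A \cdot T$ and $T$ is homologically determined up to sign, $|a|$ is $D(M)$-invariant; $\eta$ is $D(M)$-invariant by Lemma \ref{l:sg}(2); and because simplified classes have $A \cdot A = -\sum c_i^2 < 0$, Lemma \ref{l:rclass} lets me assume $A$ is reduced. After possibly replacing $A$ by $-A$ (which preserves $\eta$, since negating $\omega$ negates $K_\omega$), I may take $a > 0$. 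The target $\eta(A) \ge a(g-1)+1$ then rearranges to $(a-1)(a+2b) \ge \sum c_i(c_i-1)$, which I would derive from two ingredients: $A \cdot A \ge 0$ gives $a(a+2b) \ge \sum c_i^2$, while reducedness gives $c_i \le a$ for every $i$ and hence $a\sum c_i \ge \sum c_i^2$. Chaining these yields $(a-1)(a+2b) \ge \tfrac{a-1}{a}\sum c_i^2 \ge \sum c_i^2 - \sum c_i = \sum c_i(c_i-1)$, as required.

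For (3), $g_m(A) \ge \eta(A)$ is Lemma \ref{l:sg}(1). The content is the upper bound, and the cleanest route is to verify, under the hypotheses ($A$ reduced, $n \le 4$, $b \ge 0$), that $A \cdot A \ge \eta(A) - 1$; Theorem \ref{t:irrat} then furnishes a connected embedded symplectic representative, whose genus is exactly $\eta(A)$ by adjunction. This condition rewrites as $a\bigl(a-(2g-3)\bigr) + 2b(a+1) \ge \sum c_i(c_i+1)$: the hypothesis $b \ge 0$ controls the second term, while $n \le 4$ bounds the right-hand side in terms of $a$ via the reduced inequalities $a \ge c_1 + c_2$ and $a \ge c_i$. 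For borderline cases where this direct verification fails (for example, small $a$, large $g$, or $A\cdot A$ close to zero, as already happens for $A = U - E_1$ on an irrational ruled surface over a genus-$g$ base), I would fall back on an explicit construction in the spirit of \cite{LL2}: fix a K\"ahler structure on $M$, take a configuration of holomorphic sections, fibers, and exceptional curves matching the coefficients of $A$, and smooth the nodal configuration into a single connected embedded symplectic surface whose genus is read off from adjunction. The main obstacle is precisely this case analysis in the borderline regime, and the restriction $n \le 4$ is presumably the precise threshold that keeps the number of cases finite and explicitly checkable.
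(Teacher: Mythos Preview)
The survey does not prove this theorem; it cites Zhao--Gao and describes their method in one line as ``this reduction and an explicit construction while making use of the symplectic genus.'' Your treatments of (1) and (2) are correct and self-contained; in particular the chain $(a-1)(a+2b)\ge\frac{a-1}{a}\sum c_i^2\ge\sum c_i(c_i-1)$ goes through once one notes $a+2b\ge 0$ from $A\cdot A\ge 0$ and $a>0$, and the $D(M)$-invariance of $|a|=|A\cdot T|$ can be justified via part~(1) itself (any square-zero sphere class has $a=0$, forcing $\sigma(T)=\pm T$ for every $\sigma\in D(M)$).

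The gap is in (3). Your primary route via Theorem~\ref{t:irrat} requires $A\cdot A\ge\eta(A)-1$, equivalently $A\cdot A\ge K_{st}\cdot A$; already $A=U$ gives $1\ge 2g-3$, which fails for every $g\ge 3$. This is not a borderline phenomenon: the obstruction is the term $-(2g-3)a$, so for fixed $a,b$ the inequality fails whenever $g$ is large, independent of $k$. Thus your ``fallback'' construction is not a fallback at all but the entire content of part~(3) --- precisely the ``explicit construction'' the survey attributes to Zhao--Gao --- and you do not carry it out. The phrase ``smooth a nodal configuration of sections, fibers, and exceptional curves'' hides the main difficulty: the $E_i$-coefficients are negative, so one cannot simply add exceptional spheres to the configuration. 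One must instead arrange holomorphic curves in $N$ passing through the blow-up points with prescribed multiplicities, take proper transforms, and verify that the resulting connected curve has genus exactly $\eta(A)$. The hypothesis $k\le 4$ (presumably what ``$n\le 4$'' means) is exactly what keeps this case-by-case verification finite, and executing that verification is where the work lies.
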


B.H.-Li and T.J.-Li proved a similar result in the case $b<0$:

\begin{theorem}[Theorem 3.9, \cite{LL3}]  Let $A=aU+bT-\sum_{i=1}^k c_iE_i$ be a reduced class with $A\cdot A\ge 0$ and $b<0$.  Let $m<k$ be such that $c_m>0=c_{m+1}=...=c_k$.  If either \begin{enumerate}
\item $a+2b\ge \left(\sum_{i=1}^m c_i\right)-m$ or
\item $a+2b=1$, and there is an $m'\le -b$ such that $c_i=2$ for $1\le i\le m'$ and $c_i=1$ for $m'<i\le m$,
\end{enumerate}
then \[g_m(A)=\eta(A).\]

\end{theorem}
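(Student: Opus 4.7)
The plan is to prove the upper bound $g_m(A) \le \eta(A)$ by an explicit construction, since the reverse inequality $\eta(A) \le g_m(A)$ is immediate from Lemma \ref{l:sg}; here $\eta(A) = \tfrac{1}{2}(K_{st}\cdot A + A\cdot A)+1$ is a non-negative integer by Theorem \ref{t:irrat} (using that $A$ is reduced and $A\cdot A\ge 0$). So the task reduces to exhibiting a connected embedded surface $\Sigma \subset M$ representing $A$ with $g(\Sigma) = \eta(A)$.

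The central tool is the circle sum construction of \cite{LL3}, which glues two embedded surfaces meeting transversally along a common oriented circle into a single embedded surface whose class is the sum of the pieces and whose genus is $g_1+g_2$, rather than $g_1+g_2+1$ as for a tube sum. This saving of one handle per gluing is the decisive improvement over tube sums and is exactly what the regime $b<0$ demands: a negatively oriented fiber cannot be tubed into a positive-genus symplectic piece without reversing orientation or introducing extra handles, but it can be circle-summed along a suitable loop.

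The strategy is to decompose
\[
A \;=\; A_{\mathrm{base}} \;+\; \sum_{j} B_j,
\]
where $A_{\mathrm{base}}$ is a section-type class with an explicit embedded representative of controlled genus (for example, a smooth holomorphic section in a suitable complex structure on $M$, possibly after blow-ups), and each $B_j$ is a sphere class drawn from $\pm T$, $E_i$, or combinations like $E_i - T$ arising geometrically from exceptional divisors on fibers. Under the Case 1 hypothesis $a+2b\ge\sum_{i=1}^m c_i - m$, the quantity $a+2b$ provides exactly the slack needed to pick $A_{\mathrm{base}}$ with the correct self-intersection and to arrange the transverse intersection circles required for the subsequent circle sums. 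Under the Case 2 hypothesis $a+2b=1$ with the prescribed $c_i$-pattern, the decomposition is essentially forced: the $m'$ coefficient-$2$ exceptional divisors must be paired off with $m'$ of the $-b$ negative-fiber contributions, and the remaining $c_i=1$ exceptional divisors are attached one at a time. An iterated circle sum along these chosen transverse circles assembles the pieces into one connected surface, and a direct algebraic check using the circle-sum genus formula verifies that the resulting genus is exactly $\eta(A)$.

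The main obstacle is the rigid genus accounting, especially in Case 2, where the equality $a+2b=1$ leaves no margin: any extra handle introduced by a miscounted circle sum immediately pushes the total above $\eta(A)$. A secondary, more geometric, obstacle is that at each step the two pieces must meet in a single transverse circle suitable for a circle sum; this typically requires ambient (symplectic) isotopy of the component surfaces into the needed position. One must verify that such isotopies exist under the stated hypotheses, and in particular that the section-type class $A_{\mathrm{base}}$ admits a representative meeting each $B_j$ in the required single-circle configuration. A subtler variant of this issue appears in Case 2, where the enforced pairing between the $c_i=2$ divisors and the negative fibers must be realized simultaneously by a single embedded configuration in $M$, not merely one pair at a time.
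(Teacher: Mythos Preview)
Your proposal rests on a misunderstanding of the circle sum of \cite{LL3}. The construction does \emph{not} glue surfaces ``meeting transversally along a common oriented circle''; it takes two \emph{disjoint} closed oriented surfaces $\Sigma_0,\Sigma_1$ of \emph{positive} genus, removes from each an annulus neighbourhood of a homologically nontrivial circle, and joins the resulting four boundary components by two tubes along an embedded annulus in $M$. The output has genus $g_1+g_2-1$, not $g_1+g_2$, and the ordinary tube (connected) sum has genus $g_1+g_2$, not $g_1+g_2+1$. Because each piece must carry a nontrivial class in $H_1$, you \emph{cannot} circle-sum with a sphere: your decomposition $A=A_{\mathrm{base}}+\sum_j B_j$ with the $B_j$ sphere classes is therefore unusable as stated, and with it the entire genus bookkeeping collapses.

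What the actual argument in \cite{LL3} does is to manufacture, inside the irrational ruled manifold, a collection of \emph{disjoint positive-genus} pieces (section-type surfaces, fibers, and blown-up configurations thereof) whose classes sum to $A$, and then iterate circle sums along explicitly located nontrivial loops. The hypotheses (1) and (2) are precisely the numerical conditions guaranteeing that enough such positive-genus building blocks exist disjointly and that the iterated $-1$ savings in genus accumulate to hit $\eta(A)$ on the nose. The delicate point you should address is not ``arranging transverse intersection circles'' (there are none) but rather finding, at each stage, an embedded annulus in $M$ joining the chosen essential loops on the two disjoint pieces, with the required normal-framing compatibility of Theorem~2.1 in \cite{LL3}; Case~2 is tight exactly because the supply of such annuli (coming from the $-b$ negative fibers) is just barely sufficient when the $c_i$ take the stated pattern.
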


The proof of this theorem makes use of a novel construction, called the {\it circle sum}.  This operation takes two surfaces of genus $g_1$ and $g_2$ and produces a new surface of genus $g_1+g_2-1$.  

Let $\Sigma_0$ and $\Sigma_1$ be closed, oriented surfaces of positive genus $g_0$ resp. $g_1$ disjointly embedded in $M$.  By removing a disk from each and tubing them, one obtains the connected sum and a surface of genus $g_1+g_2$.  Instead of considering embedded circles which bound a disk, consider embedded circles $S_i^1\subset \Sigma_i$ which represent a non-trivial class in $H_1(\Sigma_i,\mathbb Z)$.  Remove from each surface an annulus $S^1_i\times [0,1]$ to obtain two surfaces with two boundary circles each.  If the boundaries are pairwise connected by a tube, one either re-obtains the two disjoint surfaces or a new surface is formed.  This is the circle sum of $\Sigma_1$ and $\Sigma_2$, introduced in \cite{LL3}.

The key is to find an appropriate embedded annulus in $M$ that intersects the surfaces $\Sigma_i$ at precisely the circles $S^1_i$, and which admits a non-vanishing normal vector field that is tangential to $\Sigma_i$ but normal to $S^1_i$ at the ends.  Under these conditions, Theorem 2.1, \cite{LL3}, states that the circle sum exists, that the obtained surface has genus $g_1+g_2-1$, and that it is in the class $[\Sigma_0]+[\Sigma_1]$.

\subsection{Elliptic Manifolds}  The results in this section combine the techniques from above.  Some version of an adjunction inequality is used to give a lower bound on the minimal genus, a Diff(M)-argument is used to reduce the set of classes that need to be considered, and finally, the circle sum is used to construct an explicit example, thus determining the minimal genus.

Hamilton \cite{H1} studied minimal, simply connected elliptic surfaces $M$ with $b^+>1$. Let $k$ denote the Poincar\'e dual of the canonical class $K$ of $M$ divided by its divisibility (or $k$ is the fiber class if $M$ is a K3 surface).  Let $V$ denote a class with $k\cdot V=1$ (for example, the class of a section of the fibration if there are no multiple fibers) with $V\cdot V=2b$ or $2b+1$.  Denote the class $W=V-bk$.  Then the intersection form on the span of $W$ and $k$ is either
\[
H=\left(\begin{array}{cc}0&1\\1&0\end{array}\right) \mbox{  if  }V\cdot V=2b
\]
or
\[
H'=\left(\begin{array}{cc}0&1\\1&1\end{array}\right) \mbox{  if  }V\cdot V=2b+1.
\]
Then $M$ has intersection form given by either
\[
H\oplus lH\oplus m(-E_8)\mbox{  or  }H'\oplus lH\oplus m(-E_8),
\]
with $l\ge 2$ and $m>0$.  A (standard) basis for the second $H$ summand can be obtained from the Gompf nucleus $N(2)$; see \cite{GS}.  This nucleus contains a rim torus $R$ of self-intersection 0 and a sphere of self-intersection -2 such that $R\cdot S=1$.  Choose as a basis of the second  $H$ summand the classes $R$ and $T=R+S$, where $T$ can be represented by a torus of self-intersection 0.

Building on work of L\"onne \cite{Lo}, Hamilton obtained the following result:

\begin{lemma}[Prop. 2.10, \cite{H1}]\label{l:hp}
Let $M$ be an elliptic surface and $B$ an arbitrary class in the subgroup $lH\oplus m(-E_8)\subset H_2(M,\mathbb Z)$.  Then $B$ can be mapped to any other class in $lH\oplus m(-E_8)$ of the same self-intersection and divisibility by a  self-diffeomorphism of $M$.  In particular, $B$ can be mapped to a linear combination of $R$ and $S$.  This diffeomorphism acts by identity on the span $\langle k, W\rangle.$  
\end{lemma}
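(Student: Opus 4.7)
The plan is to combine an algebraic transitivity result on the sublattice $L=lH\oplus m(-E_8)$ with a geometric realization of the relevant lattice automorphisms by self-diffeomorphisms of $M$ that act trivially on the orthogonal complement $\langle k,W\rangle$.

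On the algebraic side, since $l\ge 2$ and $m>0$, the lattice $L$ is indefinite, even and unimodular of rank at least four. A classical theorem of Eichler--Kneser, extended by Wall to indefinite unimodular lattices, then asserts that two vectors of $L$ lie in the same $O(L)$-orbit if and only if they have equal self-intersection and equal divisibility. This reduces the lemma to producing, for any abstract isometry $\varphi\in O(L)$ sending $B$ to a prescribed target in $L$, a self-diffeomorphism of $M$ whose action on $H_2(M,\mathbb{Z})$ restricts to $\varphi$ on $L$ and to the identity on $\langle k,W\rangle$.

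On the geometric side, I would invoke L\"onne's construction \cite{Lo}. The elliptic surface $M$ admits a decomposition in which the first hyperbolic summand $\langle k,W\rangle$ sits inside a distinguished Gompf nucleus $N(2)$, while the remaining summands $lH\oplus m(-E_8)$ are carried by the complement. L\"onne exhibits explicit generators of a subgroup of $O(L)$ realized by diffeomorphisms of $M$: Dehn twists along vanishing cycles producing the reflections that generate the Weyl group of each $(-E_8)$-plumbing, monodromies around singular fibers giving the elementary transformations on each $H$-summand, and permutations of the summands coming from fiber-sum exchanges. By construction each such generator is supported in the complement of a collar neighborhood of the $\langle k,W\rangle$-nucleus, so it acts as the identity on $\langle k,W\rangle$. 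Hamilton's role in \cite{H1} is to organize L\"onne's generators and verify that the subgroup they produce is already large enough to realize the full $O(L)$-orbit of a vector of prescribed self-intersection and divisibility.

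The proof then concludes as follows. Given $B$ of self-intersection $B\cdot B$ and divisibility $d$, pick any target $B'\in L$ of the same self-intersection and divisibility, for instance a suitable combination $\alpha R+\beta S$ in the $N(2)$-summand. The algebraic step supplies $\varphi\in O(L)$ with $\varphi(B)=B'$; the geometric step lifts $\varphi$ to $f\in \Diff^+(M)$ which fixes $\langle k,W\rangle$ pointwise and satisfies $f_*(B)=B'$. The main obstacle is this last step: the lattice-theoretic transitivity is standard, but realizing every element of the required subgroup of $O(L)$ by an ambient diffeomorphism while preserving $\langle k,W\rangle$ is nontrivial, and this is precisely the content of L\"onne's explicit monodromy construction as adapted in \cite{H1}.
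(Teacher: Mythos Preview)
The survey does not supply its own proof of this lemma; it is quoted as Proposition~2.10 of Hamilton \cite{H1} with only the remark that Hamilton ``builds on work of L\"onne \cite{Lo}.'' There is thus no argument in the present paper against which to compare yours line by line.

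Your outline is nonetheless consistent with that attribution and is essentially the correct skeleton: Wall-type transitivity of $O(L)$ on vectors of fixed square and divisibility in the indefinite even unimodular lattice $L=lH\oplus m(-E_8)$, combined with L\"onne's realization of enough isometries by diffeomorphisms supported away from the first nucleus so as to fix $\langle k,W\rangle$. One logical slip to clean up: you first phrase the geometric step as lifting an \emph{arbitrary} $\varphi\in O(L)$ to a diffeomorphism, which is stronger than what is needed and stronger than what L\"onne and Hamilton actually prove; only later do you retreat to the correct statement that the realized subgroup is merely large enough to act transitively on the relevant orbit. State the weaker target from the outset and the sketch is sound.
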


This diffeomorphism allows any class to be mapped to one that is contained in $N(2)$, and thus the following lemma is crucial:

\begin{lemma}[Cor. 3.3, \cite{H1}]  Every non-zero class in $H_2(N(2),\mathbb Z)$, not necessarily primitive, which has self-intersection $2c-2$ with $c\ge 0$, is represented by an embedded surface of genus $c$ in $N(2)$.

\end{lemma}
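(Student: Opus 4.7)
Proof proposal. Work in the basis $(R,T)$ of $H_2(N(2),\mathbb{Z})$ with $R\cdot R=T\cdot T=0$ and $R\cdot T=1$, and write $A=aR+bT$, so that $A\cdot A=2ab$; the hypothesis $A\cdot A=2c-2$ then reads $ab=c-1$. My plan is to construct, for each pair $(a,b)$, an explicit embedded surface of genus $c$ assembled from parallel copies of the two basis tori, using two operations: smoothing of a transverse intersection between two surfaces (which lowers $\chi$ by $2$ and, when the two surfaces lie in distinct components, merges them), and the circle sum from the previous subsection, which combines two surfaces of genera $g_1,g_2$ into a single one of genus $g_1+g_2-1$.

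The main case is $ab\ge 1$. After reversing orientations if necessary I may take $a,b\ge 1$, and place $a$ disjoint parallel copies of $R$ together with $b$ disjoint parallel copies of $T$; since $R\cdot T=1$ while copies of $R$ (resp.\ of $T$) are pairwise disjoint, the configuration has exactly $ab$ transverse intersection points. Smoothing all of them and arranging that $a+b-1$ of the smoothings are component-connecting---possible because $(a-1)(b-1)\ge 0$ gives $ab\ge a+b-1$---produces a connected embedded surface with $\chi=-2ab$, and hence of genus $ab+1=c$. The boundary case $ab=0$, say $a=0$ and $b\ne 0$, forces $c=1$: for $|b|=1$ take $T$ itself, and for $|b|\ge 2$, take $|b|$ disjoint parallel fibers and perform $|b|-1$ iterated circle sums, using that each circle sum of two tori is again a torus, to obtain an embedded torus representing $bT$. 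The required annuli are supplied by local triviality of the elliptic fibration near a regular fiber, along which a nontrivial loop in one fiber parallel-transports to a loop in a nearby fiber. Finally, the degenerate case $ab=-1$ (so $c=0$) gives $A=\pm(R-T)=\mp S$, represented by the section sphere.

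The main obstacle is the boundary case: one has to check that the iterated circle sums can actually be performed inside $N(2)$ without sacrificing embeddedness, i.e., that at every stage an annulus satisfying the hypotheses of Theorem 2.1 of \cite{LL3} can be located even after several fibers have already been amalgamated into one intermediate torus. This calls for careful tracking of the normal framings along paths in the base, and it is the step on which the sharpness of the genus estimate for nonprimitive multiples of $T$ (and symmetrically of $R$) ultimately depends.
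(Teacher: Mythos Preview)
Your construction is correct and more explicit than what the survey offers: the paper simply records that Corollary~3.3 of \cite{H1} follows from a result of Kronheimer--Mrowka (cited as Lemma~14 in \cite{L}), without reproducing the argument. Your approach---$a$ parallel copies of $R$ meeting $b$ parallel copies of $T$ transversely in exactly $ab$ points, then resolving all nodes, with circle sums handling the degenerate cases---is a self-contained alternative that bypasses the cusp-monodromy input.

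Two small points. First, in the $ab=0$ case your appeal to ``local triviality of the elliptic fibration near a regular fiber'' is more than you need and possibly more than is available: at most one of $R,T$ is the fiber class, so the phrase does not literally cover both subcases. What you actually use is only that a square-zero torus has trivial normal bundle; the parallel copies then sit in a local product $T^2\times D^2$, and the annulus $\gamma\times[\text{path in }D^2]$ manifestly carries a nonvanishing normal field tangent to the end tori and normal to $\gamma$, which is exactly the framing hypothesis of the circle-sum theorem. Alternatively, the reflection in the $(-2)$-sphere $S$ is realized by a self-diffeomorphism of $N(2)$ and interchanges $R$ and $T$, so the two subcases are genuinely symmetric. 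Second, the obstacle you flag in your last paragraph is not serious: after circle-summing two parallel tori along $\gamma$, a loop parallel to $\gamma$ survives as an essential curve in the resulting torus, and the next connecting annulus can be taken over a path in the $D^2$ factor disjoint from the previous one. Iterating gives the required embedded torus representing any nonzero multiple of $R$ or of $T$.
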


This result follows from a result by Kronheimer-Mrowka; see Lemma 14, \cite{L}.  Using the previous two results, the following is immediately apparent:

\begin{lemma}[Cor 4.1, \cite{H1}]  Let $M$ be an elliptic surface.  Then every non-zero $A\in H_2(M,\mathbb Z)$ of self-intersection $2c-2$ with $c\ge 0$ that is orthogonal to the classes $K$ and $V$ is represented by a surface of minimal genus $c$.  This surface can be assumed to be embedded in $N(2)\subset M$.
\end{lemma}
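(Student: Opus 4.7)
The plan is to assemble the two immediately preceding lemmas of \cite{H1}: use Lemma \ref{l:hp} to transport $A$ into a copy of the Gompf nucleus $N(2)$, then invoke the $N(2)$-representability corollary to produce a genus-$c$ surface, and finally establish the matching lower bound via the Seiberg--Witten adjunction inequality.

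For the upper bound, first observe that $\langle K,V\rangle = \langle k,W\rangle$ spans the first $H$ (or $H'$) summand of the intersection-form decomposition, so the hypothesis $A\cdot K=A\cdot V=0$ forces $A$ to lie in the complementary sublattice $lH\oplus m(-E_8)$. Lemma \ref{l:hp} then furnishes a self-diffeomorphism $\phi$ of $M$, acting as the identity on $\langle k,W\rangle$, sending $A$ to a non-zero class $A'\in\langle R,T\rangle\subset H_2(N(2),\mathbb Z)$ of the same self-intersection $2c-2$. Applying the preceding Corollary of \cite{H1} to $A'$ produces an embedded surface $\Sigma'\subset N(2)$ of genus $c$ representing $A'$; the pullback $\phi^{-1}(\Sigma')$ is an embedded surface of genus $c$ representing $A$ in $M$, contained in the embedded copy $\phi^{-1}(N(2))$ of the nucleus.

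For the lower bound, use the fact that $M$ is a simply connected minimal elliptic surface with $b^+>1$, so by Taubes the canonical class $K$ is a Seiberg--Witten basic class. The hypothesis $K\cdot A=0$ combined with the adjunction inequality---Theorem \ref{t:sw} in the negative self-intersection case, or its Kronheimer--Mrowka precursor in the non-negative case---yields, for any connected embedded representative $\Sigma$ of $A$ of genus $g\ge 1$,
\[
2g-2 \;\ge\; A\cdot A + |K\cdot A| \;=\; 2c-2,
\]
so $g\ge c$.

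The main obstacle is the sphere case $g=0$, since Theorem \ref{t:sw} requires $g>0$. For $c=0$ the bound $g\ge c=0$ is automatic and the construction supplies an embedded sphere, so $g_m(A)=0$. For $c\ge 1$ one has $A\cdot A=2c-2\ge 0$, and the non-negative self-intersection adjunction inequality applied to a hypothetical sphere representative would force $-2\ge A\cdot A\ge 0$, a contradiction; hence $g\ge 1$, and the displayed inequality then gives $g\ge c$. Combining the two bounds yields $g_m(A)=c$, realized by a surface embedded in $N(2)\subset M$, as asserted.
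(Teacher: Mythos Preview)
Your argument is correct and matches the paper's approach: the paper records the statement as ``immediately apparent'' from the two preceding lemmas, and you have supplied exactly those details together with the implicit adjunction-inequality lower bound. One small imprecision worth noting is that $\langle K,V\rangle$ and $\langle k,W\rangle$ need not coincide as sublattices (since $K$ is a multiple of $k$), but their orthogonal complements in $H_2(M,\mathbb Z)$ agree, which is all your first step requires.
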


A slightly stronger version is possible if $M$ admits no multiple fibers (in this case $k=F$, the class of a fiber).

\begin{theorem}[Theorem 1.1, Theorem 4.8, \cite{H1}]  Let $M$ be an elliptic surface without multiple fibers.  Then every non-zero $A\in H_2(M,\mathbb Z)$ of self-intersection $2c-2$ with $c\ge 0$ that is orthogonal to $K$ is represented by a surface of minimal genus $c$.  
\end{theorem}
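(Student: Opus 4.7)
The plan is to establish the lower bound $g_m(A)\ge c$ via the adjunction inequality, and the upper bound $g_m(A)\le c$ via an explicit construction that extends the preceding Lemma by circle-summing with fibers. For the lower bound, every Seiberg-Witten basic class of a simply connected minimal elliptic surface with $b^+>1$ is a multiple of the canonical class $K$, so the hypothesis $A\cdot K=0$ kills every pairing $\langle A,s\rangle$ against a basic class. The appropriate adjunction inequality (Theorem~\ref{t:sw} when $A\cdot A<0$, or the Kronheimer-Mrowka form when $A\cdot A\ge 0$) then gives $2g-2\ge A\cdot A=2c-2$, i.e.\ $g\ge c$, for any connected embedded representative.

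For the upper bound, decompose $A$ with respect to the splitting $H_2(M,\mathbb Z)=\langle k,W\rangle\oplus (lH\oplus m(-E_8))$. Since $K$ is a multiple of $k$, the hypothesis $A\cdot K=0$ forces the $W$-coefficient of $A$ to vanish, so one may write $A=xk+B$ with $B\in lH\oplus m(-E_8)$ and $B\cdot B=A\cdot A=2c-2$. Apply Lemma~\ref{l:hp} to obtain a self-diffeomorphism $\phi$ of $M$ fixing $\langle k,W\rangle$ pointwise and carrying $B$ to a class $B'=\phi_*(B)$ lying in the Gompf nucleus $N(2)$. Since $\phi_*(A)=xk+B'$ and minimal genus is diffeomorphism-invariant, it suffices to represent $xk+B'$ by a genus-$c$ surface; the preceding Lemma, applied to $B'$ (which remains orthogonal to both $K$ and $V$), produces an embedded genus-$c$ surface $\Sigma'\subset N(2)$ representing $B'$.

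To add the $xk$ summand, exploit the no-multiple-fibers hypothesis: $k$ coincides with the smooth fiber class $F$. When $c\ge 1$, choose $|x|$ disjoint generic fibers $F_1,\ldots,F_{|x|}$ outside a tubular neighborhood of $\Sigma'$, oriented according to the sign of $x$, and for each $F_i$ construct an embedded annulus, via an arc in the base $\mathbb{CP}^1$ from the basepoint of $F_i$ to a point beneath $\Sigma'$, connecting an essential loop on $\Sigma'$ to an essential loop on $F_i$; then apply the circle-sum construction of Theorem~2.1 of \cite{LL3}. Each circle sum preserves genus ($c+1-1=c$) and adds $\pm k$ to the homology class, so iterating $|x|$ times yields a connected embedded genus-$c$ surface representing $xk+B'$. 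The boundary case $c=0$ is handled separately: the diffeomorphism group of $M$ realizes the subgroup of the orthogonal group of $H_2(M,\mathbb Z)$ preserving $K$, which acts transitively on primitive classes of self-intersection $-2$ in $K^\perp$, so $xk+B'$ is diffeomorphic to $B'$ itself, which is already sphere-represented inside $N(2)$.

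The principal obstacle is geometric: verifying that the $|x|$ circle sums can be performed in succession along pairwise disjoint annuli that satisfy the normal-framing hypotheses of Theorem~2.1 of \cite{LL3}, and that the annuli can be produced from the fibration data (an arc in $\mathbb{CP}^1$ combined with a section of the fibration away from the singular fibers). The absence of multiple fibers is essential here, since only then is $k$ itself the class of a smooth fiber, so that adjusting the $k$-coefficient of $A$ is accomplished by circle-summing with honest fibers of the elliptic fibration; with multiple fibers present, the available fiber tori would only permit adding multiples of a strict divisor of $F$ in homology, and an arbitrary integer $x$ could not be reached by this procedure.
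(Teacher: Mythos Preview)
Your proposal is correct and follows essentially the same route as the paper: reduce via Lemma~\ref{l:hp} to the standard form $\alpha F+\gamma R+\delta T$ (your $xk+B'$ with $B'$ in the span of $R,T$), and then use the circle-sum with smooth fibers to adjust the $F$-coefficient, which is exactly where the no-multiple-fibers hypothesis enters. Your separate handling of $c=0$ appeals to the full L\"onne description of $D(M)$ rather than Lemma~\ref{l:hp} alone; this is a legitimate (and necessary) input, since $xk\pm S$ lies outside $lH\oplus m(-E_8)$.
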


The key to this proof is to reduce to the standard class $\alpha F+\gamma R+\delta T$ using Lemma \ref{l:hp} and to use the circle sum construction to produce an embedded surface representing this class.

Nakashima \cite{Nak} studied $T^2$-bundles over surfaces $\Sigma_h$ of genus $h\ge 1$.  To begin, let $M=T^2\times \Sigma_h$.   Denote $x_1, z_1,...,x_h, z_h$ as a symplectic basis of $H_1(\Sigma_h,\mathbb Z)$ as described in Figure \ref{f:nak}.  Similarly, fix a symplectic basis $y,t$ of $T^2$.  Let $S$ and $F$ denote the class of a section and of the fiber in $H_2(M,\mathbb Z)$ respectively.    Write $A\in H_2(M,\mathbb Z)$ as
\[
A=\sum_{i=1}^h(a_i x_i\otimes y+b_i z_i\otimes t+c_i x_i\otimes t + d_i(-z_i\otimes y))+eS+f(-F)=
\]
\[
= (a_1,b_1,c_1,d_1,...,a_h,b_h,c_h,d_h,e,f).
\]
In this basis, the intersection form on $M$ is $H^{\oplus 2g+1}$.  A detailed study of Diff(M) leads to the following result:

\begin{lemma}[Lemma 2.2, \cite{Nak}]  For any homology class $A\in H_2(M,\mathbb Z)$, there exists a self-diffeomorphism of $M$ sending $A$ to the class
\[
(\tilde a_1, \alpha\tilde a_1,0,0,\tilde a_2,0,\tilde c_2,0,...,\tilde a_h,0,\tilde c_h,0,\beta \tilde a_1,\tilde f)
\]
for some $\alpha,\beta\in \mathbb Z$.
\end{lemma}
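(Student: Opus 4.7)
The plan is to construct the desired self-diffeomorphism of $M = T^2 \times \Sigma_h$ by composing elements from three families of self-diffeomorphisms: (i) product diffeomorphisms $\phi \times \mathrm{id}_{\Sigma_h}$ for $\phi \in \mathrm{Diff}^+(T^2)$, which realize the $SL(2,\mathbb{Z})$ action on $H_1(T^2,\mathbb{Z})$; (ii) product diffeomorphisms $\mathrm{id}_{T^2} \times \psi$ for $\psi \in \mathrm{Diff}^+(\Sigma_h)$, which realize the $Sp(2h,\mathbb{Z})$ action on $H_1(\Sigma_h,\mathbb{Z})$; and (iii) Dehn twists along essential tori $T = S^1 \times \gamma \subset M$, where $S^1 \subset T^2$ is one of the standard circles and $\gamma \subset \Sigma_h$ is a nonseparating simple closed curve. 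A Dehn twist of type (iii) acts on $H_2(M,\mathbb{Z})$ by the standard formula $X \mapsto X + (X \cdot [T])\,V$ for a class $V$ determined by the framing and supported in the $\langle S, F\rangle$-summand.

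I encode the restriction of $A$ to the mixed summand $H_1(T^2) \otimes H_1(\Sigma_h) \subset H_2(M)$ as a $2 \times 2h$ integer matrix with rows indexed by $\{y,t\}$ and columns grouped into the symplectic pairs $(x_i,z_i)$; the combined action of (i) and (ii) is then the natural $SL(2,\mathbb{Z}) \times Sp(2h,\mathbb{Z})$ action on such matrices. First, I use elements of $Sp(2h,\mathbb{Z})$ from (ii) to clear the $d_i$ entries for all $i$ and the $b_i$ entries for $i \geq 2$, via a Euclidean-algorithm-style reduction within each symplectic block combined with the transitivity of $Sp(2h,\mathbb{Z})$ on primitive vectors; this achieves the target form in every block $i \geq 2$ and leaves a single residual entry $b_1$ in the first block. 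Next, an $SL(2,\mathbb{Z})$ element from (i) eliminates $c_1$ while preserving the already-normalized blocks, which forces $b_1$ to become an integer multiple of the reduced entry $\tilde a_1$, i.e., $b_1 = \alpha \tilde a_1$. Finally, a Dehn twist from (iii) along the torus $T = S^1_y \times \gamma_{x_1}$, where $\gamma_{x_1}$ represents $x_1$, has intersection pairing $[T] \cdot A = \tilde a_1$ with the partially reduced class and so adjusts the section coefficient $e$ by integer multiples of $\tilde a_1$; iterating forces $e = \beta \tilde a_1$ for some $\beta \in \mathbb{Z}$, while the fiber coefficient is free and remains as $\tilde f$.

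The principal obstacle is arranging that a single $Sp(2h,\mathbb{Z})$ element simultaneously normalizes both rows of the $2 \times 2h$ matrix without the reductions in one row undoing those already achieved in the other. Once the $y$-row is in the form $(\tilde a_1, 0, \tilde a_2, 0, \ldots, \tilde a_h, 0)$, the stabilizer of this vector in $Sp(2h,\mathbb{Z})$ is comparatively small, so I plan to work block by block: first normalize the blocks $i \geq 2$ using the subgroup of $Sp(2h,\mathbb{Z})$ that fixes the pair $(x_1,z_1)$, and then exploit the remaining freedom from (i) and (iii) to clean up the first block and the section coefficient. A parallel care is required so that the final Dehn twist from (iii) does not disturb the normalized mixed summands; this is controlled by the framing class $V$, which enters only in the $S$-direction of $H_2(M)$ and is therefore harmless for the entries already brought into normal form.
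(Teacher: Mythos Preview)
The survey paper does not contain a proof of this lemma---it is cited from Nakashima with only the remark that it follows from ``a detailed study of $\mathrm{Diff}(M)$.'' So there is no proof in this paper to compare your proposal against directly. That said, your ingredients (product diffeomorphisms realizing $SL(2,\mathbb Z)\times Sp(2h,\mathbb Z)$ on $H_1(T^2)\otimes H_1(\Sigma_h)$, together with torus-twist diffeomorphisms) are exactly the natural generators one expects such a study to use, so your outline is headed in the right direction.

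There are, however, two places where your plan is underspecified enough to count as a genuine gap. First, your Step~1 (``use $Sp(2h,\mathbb Z)$ to clear all $d_i$ and all $b_i$ for $i\ge 2$'') asks a single symplectic matrix to normalize two rows of the $2\times 2h$ matrix simultaneously. You flag this as the principal obstacle, but the fix you propose---restrict to the subgroup fixing the first symplectic pair and work on blocks $i\ge2$---does not by itself kill both $b_i$ and $d_i$ in those blocks: acting on columns to zero out the $y$-row entry $d_i$ moves the $t$-row entries in tandem, and vice versa. What is actually needed is an argument about $SL(2,\mathbb Z)\times Sp(2h,\mathbb Z)$-orbits on \emph{pairs} of vectors in a symplectic lattice; the invariant $\omega(u,v)=\sum_i(a_ib_i+c_id_i)$ is preserved by both actions and is precisely what forces the residual entry to be a multiple $\alpha\tilde a_1$ at the end, but you have not shown that this is the \emph{only} obstruction. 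Second, your description of the torus Dehn twist is not correct as stated. For $T=S^1_y\times\gamma_{x_1}$ one has $[T]=x_1\otimes y$, and in the intersection form $H^{\oplus(2h+1)}$ this pairs with $A$ to give the $b_1$-coefficient, not $\tilde a_1$. More importantly, a twist along such a torus does not act by a transvection purely in the $\langle S,F\rangle$-direction. The diffeomorphisms of $M=T^2\times\Sigma_h$ that genuinely move the section coefficient $e$ are the shearing maps $\Phi_h(p,q)=(p+h(q),q)$ for $h:\Sigma_h\to T^2$, and these alter the mixed summand and the $F$-coefficient as well as $e$. You will need to identify the correct family of diffeomorphisms, compute their full action on $H_2$, and check that the already-normalized entries survive.
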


\begin{figure}[h]
\includegraphics[scale=0.35]{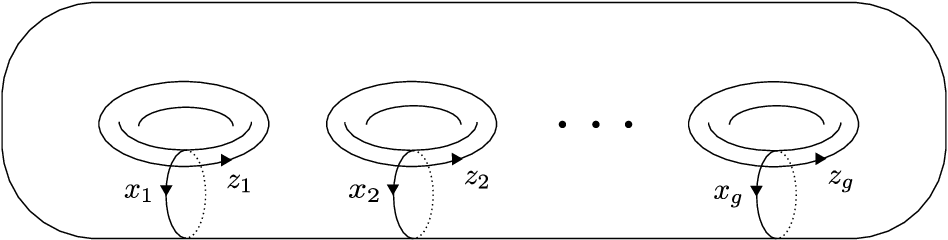}
\caption{The symplectic basis on $\Sigma_h$; see \cite{Nak}.}\label{f:nak}
\end{figure}

Note that the self-intersection is preserved, so the following must hold:
\[
\sum_{i=1}^h(a_ib_i+c_id_i)+ef=\alpha \tilde a_1^2+\beta \tilde a_1 \tilde f.
\]

Using this reduction and the circle sum construction on cleverly constructed parallel tori leads to the following result:

\begin{theorem}[Theorem 1.2, \cite{Nak}]\label{t:n1} Let $M=\Sigma_h\times T^2$ and $A\in H_2(M,\mathbb Z)$, where $\Sigma_h$ is an oriented closed surface of genus $h\ge 1$.  Then 
\[
g_m(A)=\left\{\begin{array}{ll} 0& A=0,\\1+\frac{1}{2}|A\cdot A|+(h-1)|A\cdot F|& (*),\\ 2& \mbox{otherwise,}\end{array}\right.
\]
where $F$ is the fiber class and (*) means that one of the following conditions is satisfied:\begin{itemize}
\item $F\cdot A\ne 0$,
\item $A\cdot A\ne 0$, or
\item $A\ne 0$ and $A=u\otimes v+n(-F)$ for some $u\in H_1(\Sigma_h)$, $v\in H_1(T^2)$ and $n\in \mathbb Z$.
\end{itemize}

\end{theorem}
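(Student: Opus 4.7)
The plan is to combine the diffeomorphism reduction of Lemma 2.2 with a two-sided estimate: an adjunction-type lower bound (supplemented by the asphericity of $M$) together with an explicit upper bound constructed via circle sums.

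First, by Lemma 2.2 every class $A$ is $\mathrm{Diff}(M)$-equivalent to a normalized class of the stated form, and since $g_m$, $A\cdot A$, and $A\cdot F$ are $\mathrm{Diff}(M)$-invariants, it suffices to verify the formula for these normalized representatives. The manifold $M=\Sigma_h\times T^2$ is K\"ahler with $b^+(M)=2h+1\ge 3$ and canonical class $K_M=(2h-2)F$. Applying Theorem \ref{t:sw} to the basic classes $\pm K_M$, any smoothly embedded surface $\Sigma$ of positive genus representing $A$ satisfies
\[
2g(\Sigma)-2 \;\ge\; A\cdot A + (2h-2)\,|A\cdot F|.
\]
When $A\cdot A\ge 0$ this immediately yields $g_m(A) \ge 1+\tfrac{1}{2}|A\cdot A|+(h-1)|A\cdot F|$; when $A\cdot A<0$, I would combine the inequality with the asphericity of $M$ (so $\pi_2(M)=0$ and no nonzero class is represented by a sphere), together with a finer analysis of possible torus representatives, to arrive at the same bound.

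For the upper bound I would build explicit representatives out of the elementary product tori $x_i\otimes y$, $z_i\otimes t$, $x_i\otimes t$, $z_i\otimes y$, together with parallel copies of the fiber $F$ and of the section $S$, each of which is a smoothly embedded torus. Taking disjoint parallel copies and joining them by the circle sum construction of \cite{LL3}, one adds homology classes while only increasing the genus by $1$ per building block, instead of the $+1$ one would incur from each naive connected sum. For each normalized class, a careful choice of the circles used in the sums produces a connected embedded surface whose genus realizes the predicted bound: the $(h-1)|A\cdot F|$ term accounts for the circle sums needed to carry fiber classes across the handles of $\Sigma_h$, while the $\tfrac{1}{2}|A\cdot A|$ term absorbs the intersections among the chosen tori.

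The exceptional case $g_m(A)=2$, where $A\cdot A=A\cdot F=0$ and $A\ne 0$ but $A$ is not of tensor-plus-fiber form, is in my view the main obstacle. The upper bound $g_m\le 2$ follows by expressing $A$ as a sum of two intersecting tori and connect-summing them into a genus-$2$ surface. The lower bound $g_m\ge 2$ requires ruling out torus representatives: the adjunction inequality together with $\pi_2(M)=0$ only gives $g_m\ge 1$. Since $M$ is aspherical, every continuous map $T^2\to M$ is homotopic to one determined by a homomorphism $\mathbb Z^2\to\pi_1(\Sigma_h)\times\mathbb Z^2$, and because commuting elements in the surface group $\pi_1(\Sigma_h)$ lie in a common cyclic subgroup (for $h\ge 1$), the image of $[T^2]\in H_2(M)$ is forced to be of decomposable tensor type modulo the fiber class. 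Classes outside this restricted form are therefore unrepresentable by a torus, and this $\pi_1$-based classification of torus representatives is the crux of the argument.
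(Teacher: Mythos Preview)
Your plan matches the survey's one-line sketch (Lemma~2.2 reduction plus circle sums on parallel tori), and your treatment of the exceptional genus-$2$ case via $\pi_1$ is the right idea: since $M$ is aspherical, a map $T^2\to M$ is determined up to homotopy by a homomorphism $\mathbb Z^2\to\pi_1(\Sigma_h)\times\mathbb Z^2$, and for $h\ge 2$ centralizers in $\pi_1(\Sigma_h)$ are cyclic, forcing the image of $[T^2]$ to be of the decomposable form. (For $h=1$ the ``otherwise'' case is in fact empty: a short computation with the Künneth decomposition shows that every class with $A\cdot F=0$ and $A\cdot A=0$ is already $u\otimes v+n(-F)$, so you need not worry that the surface-group argument degenerates there.)

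There is, however, a genuine gap in your lower bound when $A\cdot A<0$. Applying Theorem~\ref{t:sw} (or its non-negative companion) with the basic classes $\pm K_M=\pm(2h-2)F$ only gives
\[
2g-2\;\ge\;A\cdot A+(2h-2)\,|A\cdot F|,
\]
with $A\cdot A$ rather than $|A\cdot A|$ on the right. For $A\cdot A<0$ this is strictly weaker than what you need, and ``a finer analysis of possible torus representatives'' cannot close the gap: for example, in $T^4$ a primitive class with $A\cdot A=-2$ must have $g_m=2$, whereas your inequality plus asphericity yields only $g_m\ge 1$. The missing observation is that $M=\Sigma_h\times T^2$ admits an orientation-reversing self-diffeomorphism (reflect one circle factor of $T^2$). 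This sends $A$ to a class $A'$ with $A'\cdot A'=-A\cdot A$ and $|A'\cdot F|=|A\cdot F|$, while preserving $g_m$; applying the ordinary adjunction inequality to whichever of $A,A'$ has non-negative square then produces the desired bound with $|A\cdot A|$. Once you insert this symmetry argument, your outline goes through.
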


This result can be re-stated in terms of the Thurston norm by viewing $M$ as $(\Sigma_h\times S^1)\times S^1$ (see Theorem \ref{t:nag}).

\begin{cor}[Cor 4.4, \cite{Nak}]  Let $M=(\Sigma_h\times S^1)\times S^1$ with $h\ge 2$ and let $p:M\rightarrow \Sigma_h\times S^1$ be the projection map.  Then each class $A\in H_2(M,\mathbb Z)$ satisfies
\[
x_4(A)= |A\cdot A|+x_3(p_*A).
\]
\end{cor}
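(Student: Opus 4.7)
The plan is to compute both sides of the claimed equality directly and verify they agree, using Theorem~\ref{t:n1} on the four-dimensional side, an explicit Thurston-norm computation on $N = \Sigma_h \times S^1$ on the three-dimensional side, and Theorem~\ref{t:nag} for the matching lower bound on $x_4(A)$ via the (trivial) circle bundle $p : M \to N$.

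First I compute $x_3(p_*A)$. In the standard basis of Theorem~\ref{t:n1}, let $e = A \cdot F$ denote the coefficient of the section class $S = [\Sigma_h \times \mathrm{pt}]$. Since $p$ collapses one $S^1$ factor of the $T^2$ fiber, I have $p_*(F) = 0$, $p_*(S) = [\Sigma_h \times \mathrm{pt}] \in H_2(N, \mathbb{Z})$, and each remaining basis element of $H_2(M,\mathbb{Z})$ either pushes forward to a torus class $[\gamma \times S^1] \subset N$ or vanishes; hence $p_*A = e\,[\Sigma_h \times \mathrm{pt}] + [\gamma \times S^1]$ for some $1$-cycle $\gamma \subset \Sigma_h$. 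Because $N$ fibers over $S^1$ with fiber $\Sigma_h$, Thurston's fiber-minimizing theorem yields $x_3([\Sigma_h]) = 2h-2$, while every torus class has vanishing complexity. Placing $|e|$ parallel copies of the $\Sigma_h$-fiber disjoint from a torus representative of $[\gamma \times S^1]$ then gives
\[
x_3(p_*A) = |e|(2h-2) = 2(h-1)|A \cdot F|.
\]

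Next I upper-bound $x_4(A)$ by $|A\cdot A| + 2(h-1)|A\cdot F|$. If one of the three conditions (*) of Theorem~\ref{t:n1} holds, a connected surface of genus $g_m(A) = 1 + \tfrac{1}{2}|A\cdot A| + (h-1)|A\cdot F|$ has complexity exactly $|A\cdot A| + 2(h-1)|A\cdot F|$, providing the bound. If $A = 0$, both sides trivially vanish. In the only remaining case, where $A \neq 0$, $A\cdot A = 0 = A\cdot F$, and $A$ is not of the form $u \otimes v + n(-F)$, the target value is $0$: I construct a disjoint-torus representative by decomposing the Künneth part of $A$ into rank-one tensors $u_j \otimes v_j$, realizing each by a product torus $\gamma_j \times \delta_j$ with the $\gamma_j$'s chosen pairwise disjoint in $\Sigma_h$, and appending disjoint copies of the fiber torus $F$ as needed; this gives $x_4(A) = 0$.

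Combining these upper bounds with the lower bound $x_4(A) \geq |A\cdot A| + x_3(p_*A)$ from Theorem~\ref{t:nag} closes the argument. The main obstacle lies in the third case: Theorem~\ref{t:n1} only supplies a connected representative of genus $2$ there, so one must instead exhibit $A$ explicitly as a disjoint union of embedded tori. This reduces to realizing the Künneth decomposition of $A$ by a compatible system of pairwise disjoint simple closed curves on $\Sigma_h$ together with the appropriate directions in $T^2$, which follows from standard curve-system arguments and the product structure of $M$.
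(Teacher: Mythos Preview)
Your overall strategy—compute $x_3(p_*A)$ explicitly, produce representatives giving the upper bound on $x_4(A)$, and close with a lower bound for $x_4$—is the right one, and it is essentially how the corollary is obtained from Theorem~\ref{t:n1}. However, there is a genuine gap in the lower-bound step: Theorem~\ref{t:nag} as stated in this survey explicitly requires that the base $3$-manifold $N$ \emph{not} be a Seifert fibered space, and $N=\Sigma_h\times S^1$ is Seifert fibered (indeed, it is a product circle bundle). So you cannot invoke Theorem~\ref{t:nag} here. The hypotheses excluding Seifert fibered $N$ and torus-bundle covers are needed for the general circle-bundle extensions of Friedl--Vidussi and Nagel; what you actually want is Kronheimer's original result for the trivial bundle $M=S^1\times N$ over an irreducible $N$ (reference \cite{Kr}), which does cover $N=\Sigma_h\times S^1$ with $h\ge 2$. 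Alternatively, since $M=\Sigma_h\times T^2$ is symplectic with $b^+>1$ and canonical class $(2h-2)F$, one can obtain the lower bound componentwise from the adjunction inequality, using that disjointness forces $\sum_i A_i\cdot A_i=A\cdot A$ and $\sum_i|K\cdot A_i|\ge|K\cdot A|$; this route avoids the circle-action machinery entirely.

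A secondary point: your treatment of the ``otherwise'' case is too quick. Writing the K\"unneth part as $\sum_j u_j\otimes v_j$ and asking that the $\gamma_j$ be pairwise disjoint simple closed curves on $\Sigma_h$ is not automatic for an arbitrary decomposition—for instance the naive basis decomposition of $x_1\otimes y+z_1\otimes t+x_2\otimes y-z_2\otimes t$ involves $x_1$ and $z_1$, which intersect. One really does need either a curve-system argument tailored to the constraint $A\cdot A=0$, or (more efficiently) Nakashima's diffeomorphism reduction (Lemma~2.2 in \cite{Nak}) to put $A$ in a normal form for which the disjoint-torus representative is transparent. You allude to this, but the sentence ``follows from standard curve-system arguments'' is doing real work and should be made precise.
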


A similar splitting leads to the following result:

\begin{theorem}[Theorem 4.6, \cite{Nak}]  Let $p:N\rightarrow \Sigma_h$ be a non-trivial $S^1$-bundle over $\Sigma_h$ and let $M=N\times S^1$.  Then, for each class $A\in H_2(M,\mathbb Z)$, the minimal genus is given by
\[
g_m(A)=\left\{\begin{array}{ll} 0& A=0,\\1+\frac{1}{2}|A\cdot A|& A\cdot A\ne 0\mbox{  or  }A\ne 0\mbox{  and  } A=u\otimes v+n(-F),\\ 2& \mbox{otherwise.}\end{array}\right.
\]
\end{theorem}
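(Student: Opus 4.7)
The plan is to imitate the proof strategy of Theorem~\ref{t:n1}, with the adjustments required by the non-trivial twisting of the $S^1$-bundle $N \to \Sigma_h$. First I would compute $H_2(M,\mathbb Z)$ and the intersection form via the K\"unneth decomposition $H_*(M) = H_*(N)\otimes H_*(S^1)$, using the Gysin sequence for $N \to \Sigma_h$ to describe $H_*(N)$ in terms of the Euler number $e$ of the bundle. Natural generators are products of $1$-cycles from $\Sigma_h$ with the two $S^1$-directions, the fiber-torus class $F$, and a section-type class. The non-trivial twisting shows up in a small number of pairings (a term proportional to $e$), but the form retains a hyperbolic structure together with a distinguished isotropic line spanned by $F$.

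Second, I would prove a normal-form lemma analogous to Lemma~2.2 of \cite{Nak}. Using self-diffeomorphisms of $M$---lifts of orientation-preserving diffeomorphisms of $\Sigma_h$, rotations in the extra $S^1$ factor, and Dehn twists along embedded tori---any $A$ can be sent to a standard representative in a small block of basis vectors. The condition $A = u\otimes v + n(-F)$ emerges here as precisely the case in which the normal form lives in the single ``central'' hyperbolic block containing $F$. The trichotomy of the theorem then corresponds to: $A=0$; $A$ reduces to this central block and in particular has $A\cdot A=0$ with torus representative; or $A\cdot A\ne 0$; or $A\cdot A =0$ but the normal form uses extra basis vectors (the ``otherwise'' case).

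Third, for the upper bound I would combine two geometric ingredients: parallel embedded tori coming from the product and bundle structures (for instance, tori of the form $\gamma\times S^1$ where $\gamma$ is an embedded curve in $N$, together with the torus fiber $F$), and the circle sum construction of \cite{LL3}. After reducing to normal form, one assembles an explicit collection of such tori, resolves their intersections, and uses circle sums to connect them into a single surface whose genus matches $1+\tfrac{1}{2}|A\cdot A|$, or $2$ in the exceptional case. For the lower bound I would appeal to an adjunction-type inequality (Theorem~\ref{t:st2} or its Dai--Ho--Li generalization, applied to a suitable splitting of $M$ and a characteristic class dual to $F$) to obtain $g \ge \tfrac{1}{2}|A\cdot A|$, combined with the observation that $M$ is aspherical for $h\ge 1$ (since $N$ is aspherical and so is $M=N\times S^1$), so $\pi_2(M)=0$ and no nonzero class is representable by a sphere. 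This gives the extra $+1$.

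The main obstacle is the lower bound in the ``otherwise'' case: showing that a nonzero class $A$ with $A\cdot A = 0$ which is \emph{not} of the special form $u\otimes v + n(-F)$ cannot be represented by an embedded torus. The adjunction inequality is vacuous when $A\cdot A=0$, so this must be a homotopical obstruction rather than a gauge-theoretic one: any embedded torus $T\hookrightarrow M$ gives a map $T^2\to M$, and since $M$ is a $K(\pi,1)$, its homotopy class is determined by a homomorphism $\mathbb Z^2 \to \pi_1(M)$. Analyzing the image of such homomorphisms in $H_2(M,\mathbb Z)$ via Hopf's formula, one should see that the realizable homology classes are exactly those of product form $u\otimes v + n(-F)$, which pins down the $g_m\ge 2$ bound. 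Making this identification clean---especially in the presence of the twisting of $N$---is the delicate step, and parallels the care required in the analogous statement of Theorem~\ref{t:n1}.
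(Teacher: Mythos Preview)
Your overall architecture---K\"unneth/Gysin computation of $H_2(M)$, a normal-form reduction via self-diffeomorphisms, circle-sum constructions for the upper bound, and a $\pi_1$-based obstruction for the torus case---is sound and matches the spirit of Nakashima's argument for the product bundle. The gap is in the lower bound for the main case. Neither Theorem~\ref{t:st2} nor the Dai--Ho--Li result applies: Strle's configuration inequality requires $b_1=0$, and the single-surface Dai--Ho--Li statement (Theorem~\ref{t:cg}) requires $b^+=1$, whereas $M=N\times S^1$ has $b_1(M)=2h+1$ and $b^+(M)=2h\ge 2$. More seriously, your ``characteristic class dual to $F$'' does not do what you want: when the Euler number of $N\to\Sigma_h$ is nonzero, the circle fiber of $N$ is torsion in $H_1(N)$, hence the torus fiber $F$ is torsion in $H_2(M)$ and pairs trivially with every class. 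This is exactly why the formula here has no $(h-1)|A\cdot F|$ term compared with Theorem~\ref{t:n1}.

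The survey's one-line hint (``a similar splitting'') points to the intended route: regard $M=N\times S^1$ as the trivial circle bundle $p\colon M\to N$ over the $3$-manifold $N$ and compare $x_4$ on $M$ with the Thurston norm $x_3$ on $N$, exactly as the preceding corollary does for the product $\Sigma_h\times S^1$. The key $3$-manifold fact is that a Seifert fibration with nonzero Euler number admits no horizontal incompressible surface, so every essential surface in $N$ is a vertical torus and $x_3\equiv 0$ on $H_2(N)$. An inequality of the shape in Theorem~\ref{t:nag}, namely $x_4(A)\ge |A\cdot A|+x_3(p_*A)$, then collapses to $x_4(A)\ge |A\cdot A|$, and together with asphericity this gives $g_m(A)\ge 1+\tfrac12|A\cdot A|$. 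Alternatively one can argue directly with the Seiberg--Witten basic classes of $S^1\times N$ (all torsion here, so $c\cdot A=0$) and use the orientation-reversing self-diffeomorphism of $M$ coming from conjugation on the external $S^1$ factor to reduce to $A\cdot A\ge 0$; either way, this is the correct replacement for your Strle/DHL citation.
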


The next result, while not applicable to elliptic fibrations, fits nicely into the structure of Theorem \ref{t:n1}.  Note that the case considered by Edmonds \cite{E}, using only topological methods, and corresponds to the last of the (*) cases with $n=0$.

\begin{theorem}[Theorem 1, \cite{E}]  Let $\Sigma_h$ and $\Sigma_i$ be closed, orientable surfaces of a genus greater than 1.  An element $A\in H_2(\Sigma_h\times \Sigma_i,\mathbb Z)$ is represented by a topological or differentiable embedded torus $\Sigma_1$ if and only if $A=u\otimes v$ for some $u\in H_1(\Sigma_h,\mathbb Z)$ and $v\in H_1(\Sigma_i,\mathbb Z)$.

\end{theorem}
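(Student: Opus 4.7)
The plan is to exploit the asphericity of $\Sigma_h \times \Sigma_i$: since both factors have genus at least $2$, the universal cover is $\mathbb{H}^2 \times \mathbb{H}^2$, so the product is a $K(\pi,1)$ with $\pi = \pi_1(\Sigma_h) \times \pi_1(\Sigma_i)$. Any map from $T^2$ into a $K(\pi,1)$ is determined up to homotopy by the induced homomorphism $\phi_*\co \pi_1(T^2) = \mathbb Z^2 \to \pi$, and in particular the resulting class in $H_2$ is a purely algebraic invariant of $\phi_*$.

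For the ``only if'' direction, observe that $\phi_*(\mathbb Z^2)$ is abelian, so its image in each factor is an abelian subgroup of a surface group of genus $\ge 2$, which is necessarily cyclic. Thus $\phi_*$ factors through $\langle a\rangle \times \langle b\rangle$ for some $a \in \pi_1(\Sigma_h)$ and $b \in \pi_1(\Sigma_i)$, either of which may be trivial. Applying the K\"unneth decomposition
\[
H_2(\Sigma_h \times \Sigma_i) = H_2(\Sigma_h) \oplus H_2(\Sigma_i) \oplus \bigl(H_1(\Sigma_h) \otimes H_1(\Sigma_i)\bigr),
\]
the first two summands receive $\phi_*[T^2]$ through the composition $H_2(\mathbb Z^2) \to H_2(\mathbb Z) = 0$ and therefore vanish. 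The third summand records an integer multiple of $[\iota(a)] \otimes [\iota(b)]$, namely the determinant of the induced map $\mathbb Z^2 \to \langle a\rangle \times \langle b\rangle$. In every case the outcome is manifestly a pure tensor.

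For the ``if'' direction, given $A = u \otimes v$ write $u = d u'$ and $v = e v'$ with $u',v'$ primitive (or zero, in which case $A=0$ is realized by a trivially bounding torus in a coordinate ball). Any primitive class on a surface is represented by a simple closed curve, so I would select embedded representatives $\gamma_{u'}\subset \Sigma_h$ and $\gamma_{v'}\subset \Sigma_i$ together with $d$ parallel copies of the former and $e$ parallel copies of the latter. Their $de$ pairwise products form pairwise disjoint embedded tori, each representing $u'\otimes v'$, with total class $de(u'\otimes v') = A$. To fuse these into a single connected torus without raising the genus I would invoke the circle sum described after Theorem~3.9 of \cite{LL3}: the circle sum of two tori is again a torus (since $1+1-1 = 1$), so iterating $de-1$ times produces a connected embedded torus in the class $A$.

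The main obstacle lies in justifying the circle-sum step, which demands at each stage an embedded annulus joining non-separating curves on the two tori together with a compatible non-vanishing normal vector field, in the sense of Theorem~2.1 of \cite{LL3}. For the product family constructed above, natural candidate annuli lie in $\Sigma_h \times \gamma_{v',k}$ or $\gamma_{u',j}\times \Sigma_i$ and join parallel copies of fiber circles; the technical verification amounts to arranging framings so that the resulting surface is connected, of genus one, and still represents the class $A$, rather than disconnecting or acquiring extra genus.
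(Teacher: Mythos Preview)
The survey does not itself prove this theorem; it merely quotes the result from Edmonds \cite{E}, noting that the argument there uses ``only topological methods.''  So there is no in-paper proof to benchmark against.

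That said, your ``only if'' direction is correct and is exactly the expected argument: asphericity reduces the question to the homomorphism $\mathbb Z^2\to\pi_1(\Sigma_h)\times\pi_1(\Sigma_i)$, the projections to each factor have abelian and hence cyclic image since a hyperbolic surface group contains no $\mathbb Z^2$, and the K\"unneth bookkeeping then forces the class to be an integer multiple of $[a]\otimes[b]$, which is a pure tensor.

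Your ``if'' direction via iterated circle sums is also valid in principle --- for instance the annuli $\gamma_{u'}\times\alpha$, with $\alpha$ an arc in $\Sigma_i$ joining two adjacent parallel copies of $\gamma_{v'}$, carry the required non-vanishing normal field (take the unit normal to $\alpha$ inside $\Sigma_i$), and each circle sum of two tori returns a torus --- but the iteration and framing checks are heavier than necessary.  A shorter construction avoids the circle sum entirely: absorb all divisibility into one factor by writing $u\otimes v=u'\otimes w$ with $u'$ primitive, represent $u'$ by an embedded circle $C\subset\Sigma_h$ and $w$ by a generically immersed circle $\delta\subset\Sigma_i$, and form the product map $C\times\delta\hookrightarrow\Sigma_h\times\Sigma_i$.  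This is an immersed torus whose only self-intersections are the circles $C\times\{q\}$ over the double points $q$ of $\delta$; near each such circle, pushing one of the two local sheets off in the direction normal to $C$ inside $\Sigma_h$ removes the intersection by an isotopy of the domain.  The result is an embedded torus homotopic to the original product map, hence still in the class $u'\otimes w=u\otimes v$.  This bypasses the inductive circle-sum verification you flagged as the main obstacle.
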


In Example 3.6, \cite{Li}, two disjoint symplectic tori were constructed in $M=\Sigma_2\times\Sigma_2$, representing the classes $A_1$ and $A_2$, respectively.  It was shown that no connected symplectic torus exists in the class $A_1+A_2$, and Edmond's result shows that no smooth torus exists either.  In particular, the circle sum construction cannot be applied to these two surfaces.

These results lead to the following speculation:  

\begin{speculation}  Let $M=\Sigma_h\times \Sigma_i$  and let $A\in H_2(M,\mathbb Z)$.  Then
\[
g_m(A)= \min\{\eta(A),b(A)\}. 
\]
\end{speculation}

The quantity  $b(A)$ is defined as follows (see also Def. 3.4, \cite{Li}, for an equivalent description):  recall the map $T$ from Eq. \ref{e:T},
\[
T:H^1(M,\mathbb Z)\times H^1(M,\mathbb Z)\rightarrow H^2(M,\mathbb Z),
\]  
given by $(\alpha,\beta)\mapsto \alpha\cup\beta$.  Define the map 
\[
B(A):H^1(M,\mathbb Z)\times H^1(M,\mathbb Z)\rightarrow H^2(M,\mathbb Z)
\]
by
\[
B(A)(\alpha,\beta)=T(\alpha,\beta)\cdot A.
\]
Then $b(A)$ is defined as
\[
b(A)=\frac{1}{2}\rk B(A).
\]

\subsection{Disjoint Spheres in Definite Manifolds with $b^+=2$}\label{s:new}

%
%
%

In \cite{La2}, Lawson prove the following Theorem:

\begin{theorem}[Theorem 17, \cite{La2}]\label{t:law} The only characteristic classes in $\mathbb CP^2\#\mathbb CP^2$ which are representable by spheres are $(\pm 1,\pm 1)$.  The only non-trivial divisible classes which can be represented by embedded spheres are
\[
\pm(d,0),\pm(0,d),\pm(2,2),\pm(2,-2)
\]
for $d=2,3$. The primitive and ordinary classes 
\[
\pm(1,0),\;\pm(0,1),\;(\pm 1,\pm 2),\;(\pm 2,\pm 1)
\] 
can be represented by embedded spheres.
\end{theorem}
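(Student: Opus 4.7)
The plan splits the theorem into a construction half and an obstruction half.

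\emph{Construction.} I would exhibit spheres for each listed class by combining elementary pieces. Complex lines in each $\mathbb{CP}^2$ summand represent $\pm(1,0),\pm(0,1)$, and smooth conics represent $\pm(2,0),\pm(0,2)$. The classes $(\pm 1,\pm 1)$, $(\pm 1,\pm 2)$, $(\pm 2,\pm 1)$ and $(\pm 2,\pm 2)$ are then obtained by the internal connected sum (tubing across the connect-sum neck) of two disjoint spheres, one from each summand, with orientation reversals in one summand yielding the sign variants. For $\pm(3,0)$ and $\pm(0,3)$ I would use the decomposition $(3,0) = (1,1) + (2,-1)$ together with the observation $(1,1)\cdot(2,-1) = 1$: by a careful explicit construction of representatives of the two classes (a line--line tubing versus a conic--line tubing with orientation reversal in the second factor) one arranges them to meet transversely in a single point, and the standard smoothing of that double point then produces a sphere representing $(3,0)$.

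\emph{Nonexistence for characteristic classes.} A characteristic class $(c_1,c_2)$ in $M$ has both $c_i$ odd. The Kervaire--Milnor congruence applied to a characteristic embedded sphere $\Sigma$ gives $[\Sigma]^2 \equiv \sigma(M) = 2 \pmod{16}$, which immediately eliminates every class whose two coordinates have mixed residues modulo $8$. To rule out the remaining classes (such as $(\pm 3, \pm 3)$, $(\pm 3, \pm 5)$, $(\pm 5, \pm 5)$, $(\pm 7, \pm 1)$), I would invoke the Acosta--Hamilton--Yasuhara weak adjunction inequality with $\alpha = 1/4$ for characteristic classes (Theorems 36, 38 and 39 of \cite{L}; also \cite{H2}): any embedded $\Sigma$ representing a characteristic class $A$ satisfies $-\chi(\Sigma) \ge \tfrac14 A\cdot A + f(A)$ with $f(A)$ a bounded linear function. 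For a sphere $-\chi(\Sigma) = -2$, and the resulting inequality forces $A\cdot A$ to be small; intersected with the Kervaire--Milnor congruence this leaves only $(\pm 1,\pm 1)$.

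\emph{Nonexistence for divisible classes.} For $A = dB$ with $d \ge 2$ I would apply the Rokhlin--Kotschick--Matic--Bryan family of weak adjunction inequalities (Theorems 42 and 43 of \cite{L}). For $d = 2$ with $B$ characteristic, Bryan's refinement yields $\alpha = 15/16$, which combined with the $D(M)$-action on $H_2(M,\mathbb{Z})$ (generated by the summand-swap for $\mathbb{CP}^2 \# \mathbb{CP}^2$) excludes every $2B$ except those of very small self-intersection, namely $\pm(2,0), \pm(0,2), \pm(2,\pm 2)$. For $d = 3$, Rokhlin's inequality with $\alpha = \tfrac14(1-1/d^2) = \tfrac{2}{9}$ forces positive genus unless $|B\cdot B|$ is very small, leaving only $\pm(3,0)$ and $\pm(0,3)$. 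For $d \ge 4$ the same family of inequalities is strong enough to eliminate every remaining candidate.

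\emph{Main obstacle.} The principal difficulty is the characteristic nonexistence step: the Kervaire--Milnor congruence alone leaves many classes open, and sharpening it via the Acosta--Hamilton--Yasuhara inequality requires tracking the correction $f(A)$ precisely enough to rule out low-square classes such as $(3,3)$, where $A\cdot A = 18$ already satisfies the mod-$16$ congruence. The secondary difficulty is the geometric realization of the $(3,0)$ sphere, where the algebraic intersection $(1,1)\cdot(2,-1) = 1$ must be realized as a single transverse geometric intersection; this requires cancelling the spurious intersections produced inside each $\mathbb{CP}^2$-summand via Whitney-type moves in the (simply connected) complement of the two building-block spheres.
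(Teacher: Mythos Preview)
The survey you are working from does \emph{not} prove Theorem~\ref{t:law}; it merely quotes the result from Lawson's 1992 paper \cite{La2} and then uses it as input for the new material in Section~\ref{s:new}. There is therefore no ``paper's own proof'' to compare your proposal against.

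A few remarks on your sketch in its own right. First, note the anachronism: you invoke the Acosta--Hamilton--Yasuhara and Bryan inequalities, several of which postdate \cite{La2}, so whatever Lawson's original argument was, it cannot have been this one. Second, in the divisible case you apply Bryan's $\alpha=15/16$ bound ``for $d=2$ with $B$ characteristic'' to handle classes like $(2,0)$, but $(1,0)$ is \emph{not} characteristic in $\mathbb{CP}^2\#\mathbb{CP}^2$ (it pairs to $0$ with $(0,1)$ while $(0,1)^2=1$), so that refinement does not apply there; you would need to fall back on the plain Rokhlin estimate. Third, your characteristic exclusion step is the genuinely delicate part, exactly as you flag: Kervaire--Milnor alone leaves $(\pm3,\pm3)$, $(\pm1,\pm7)$, etc., and to kill these with a weak adjunction inequality of slope $1/4$ you must control the linear correction $f(A)$ precisely, which your write-up does not do. Finally, your $(3,0)$ construction via $(1,1)+(2,-1)$ is clever but, as you note, converting the algebraic intersection number $1$ into a single geometric transverse point requires a Whitney-type cancellation argument that is not automatic when both pieces already straddle the connect-sum neck.
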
 

\begin{conjecture}[Conjecture 1, \cite{La2}]\label{c:law}
The only primitive and ordinary classes in $\mathbb CP^2\#\mathbb CP^2$ which can be represented by embedded spheres are given by Theorem \ref{t:law}.
\end{conjecture}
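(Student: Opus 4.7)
My plan is to apply Theorem \ref{t:st2} in the case $n=2$, which is immediate for $M = \mathbb{CP}^2\#\mathbb{CP}^2$ since $b^+(M) = 2$ and $b_1(M) = 0$. First I would reduce to canonical form using the $D(M)$-action: this contains the coordinate swap $H_1 \leftrightarrow H_2$ and independent sign changes on each basis vector, so any primitive ordinary class $A = aH_1 + bH_2$ is equivalent to one with $a \geq b \geq 0$. In this normal form the classes listed in Theorem \ref{t:law} are exactly $(1,0)$ and $(2,1)$, so it suffices to exclude every primitive ordinary $(a,b)$ with $a > b \geq 1$ and $d = a^2 + b^2 > 5$ as a spherical class. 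A key preliminary observation is that the orthogonal class $A^\perp = (b,-a)$ is itself $D(M)$-equivalent to $A$ (via swap composed with a sign change), hence under the working contradiction hypothesis $A^\perp$ is also sphere-representable.

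Assume for contradiction that $S \subset M$ is a sphere with $[S] = A$. A Mayer-Vietoris analysis of the decomposition $M = N(S) \cup W$, where $N(S)$ is a closed tubular neighborhood with $\partial N(S) \cong L(d,1)$, shows that the image of $H_2(W;\mathbb{Z}) \to H_2(M;\mathbb{Z})$ is exactly the orthogonal lattice $\mathbb{Z} A^\perp$: the image necessarily lies in $\mathbb{Z}A^\perp$ by disjointness from $S$, and the cokernel of the full image in $\mathbb{Z}^2$ must inject into $H_1(\partial N(S)) = \mathbb{Z}/d$, while $\mathbb{Z}^2/(\mathbb{Z}A + \mathbb{Z}A^\perp)$ already has order $d$. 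Consequently $A^\perp$ is represented by an embedded surface $\Sigma' \subset W$, automatically disjoint from $S$, with $[\Sigma']^2 = d > 0$. Applying Theorem \ref{t:st2} to the pair $(S, \Sigma')$ with a characteristic class $c = (2m+1, 2n+1)$ satisfying $c \cdot c > \sigma(M) = 2$ together with $c \cdot A, c \cdot A^\perp \geq 0$, I obtain the dichotomy that either $c \cdot A \geq d + 2$ or $-\chi(\Sigma') \geq d - c \cdot A^\perp$. A direct search in the 90-degree wedge between $A$ and $A^\perp$ shows that a standard test class such as $c = (3,1)$ or $c = (3,-1)$ lies in this wedge with both pairings strictly less than $d+2$ for every $(a,b)$ with $d > 5$, ruling out the first alternative and forcing the genus lower bound on $\Sigma'$.

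The remaining difficulty is that the Mayer-Vietoris surface $\Sigma'$ has no a priori upper bound on its genus, so the inequality $-\chi(\Sigma') \geq d - c \cdot A^\perp$ is not yet a contradiction; indeed, already in the allowed case $A = (2,1)$ the same dichotomy forces $g(\Sigma') \geq 1$, which is consistent with $(2,1)$ being sphere-representable while no dual sphere can be made disjoint from $S$. To upgrade to a contradiction when $d > 5$, I would aim to replace $\Sigma'$ by a smooth sphere $S''$ representing $A^\perp$ that is simultaneously disjoint from $S$ --- an abstract sphere $S''$ exists by the preliminary $D(M)$-equivalence, but making it disjoint from $S$ requires cancelling the $S \cdot S'' = 0$ algebraic intersection geometrically, which fails in general by the smooth Whitney obstruction in dimension four. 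The main new input I would develop is therefore a geometric disjointness theorem in the $b^\pm = 2$ positive-definite setting, asserting that any two smooth spheres of positive self-intersection and zero algebraic intersection in $M$ can be smoothly isotoped to be disjoint, perhaps by exploiting the positive scalar curvature metric on $M$ (which trivializes the standard Seiberg-Witten invariants) and a controlled Casson-style cancellation of double points along Whitney disks. With such a disjointness result the inequalities $c \cdot A \geq d+2$ and $c \cdot A^\perp \geq d+2$ become simultaneous conclusions of Theorem \ref{t:st2}, and the choice of $c$ above violates both at once, yielding the required contradiction for all $(a,b)$ with $d > 5$. Establishing this geometric disjointness theorem is the main obstacle and the heart of the proposed approach.
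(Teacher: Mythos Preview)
The statement you are attempting to prove is stated in the paper as an open \emph{conjecture}, and the paper does not claim a proof. What the paper actually establishes, using exactly the mechanism you identify (Theorem~\ref{t:st2} applied to a disjoint pair of positive-square surfaces), is the weaker statement that no class outside the list of Theorem~\ref{t:law} can be represented by a \emph{pair of disjoint} essential spheres; see Lemma~\ref{l:1} and the corollary following it. The paper explicitly remarks that it remains unknown whether, for example, $(3,2)$ is represented by a single embedded sphere.

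Your proposal correctly reproduces this mechanism and honestly isolates the gap: to pass from the disjoint-pair result to the full conjecture you need a smooth sphere in $A^\perp$ that is \emph{geometrically disjoint} from the hypothetical sphere $S$ in $A$. The ``geometric disjointness theorem'' you propose --- that two smooth positive-square spheres with zero algebraic intersection in a positive-definite $b^+=2$ four-manifold can be isotoped apart --- is precisely where the smooth Whitney trick fails in dimension four, and neither the positive scalar curvature metric on $\mathbb{CP}^2\#\mathbb{CP}^2$ nor the vanishing of its ordinary Seiberg-Witten invariants is known to circumvent this. In effect your reduction trades the conjecture for a statement at least as hard; the paper's own partial results stop at exactly the same point, and the conjecture remains open.
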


Motivated by this Conjecture, let $M$ be a smooth, closed, connected, definite four-manifold with $b_1(M)=0$ and $b^+(M)=2$.  Then $H_2(M,\mathbb Z)\simeq \mathbb Z^2$ and the classes in Theorem \ref{t:law} still make sense.  Applying Theorem \ref{t:st2}, the following can be shown:

\begin{lemma}\label{l:1}
Assume that $A,B\in H_2(M,\mathbb Z)$ are non-trivial with $A=(a_1,a_2)$.  Let $I=\{0,\pm 1,\pm 2\}$ and assume that either $a_1\not\in I$ or $a_2\not\in I$.  If $A$ and $B$ are represented by disjoint connected surfaces, then at most one of the surfaces is an embedded sphere.  
\end{lemma}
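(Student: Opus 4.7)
My plan is to derive a contradiction from Theorem \ref{t:st2}. Assume both $A$ and $B$ are represented by disjoint embedded spheres $\Sigma_A, \Sigma_B$. Since $M$ is positive definite with $b^+ = 2$ and $b^- = 0$, the intersection form is the unimodular form $\langle 1\rangle\oplus\langle 1\rangle$, so $\sigma(M) = 2$ and every nonzero class has strictly positive self-intersection; in particular the positive self-intersection hypothesis in Theorem \ref{t:st2} is automatic. For a characteristic class $c \in H_2(M, \mathbb{Z})$ with $c \cdot c > 2$, upon orienting the two spheres so that $c \cdot [\Sigma_A], c \cdot [\Sigma_B] \ge 0$ and using $\chi(\Sigma_A) = \chi(\Sigma_B) = 2$, Theorem \ref{t:st2} reads
\[
|c \cdot A| \ge A \cdot A + 2 \quad \text{or} \quad |c \cdot B| \ge B \cdot B + 2.
\]
It therefore suffices to exhibit a characteristic $c$ (both coordinates odd) with $c \cdot c > 2$ satisfying both $|c \cdot A| < A \cdot A + 2$ and $|c \cdot B| < B \cdot B + 2$.

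The key is to test the four classes $c \in \{(1, \pm 3), (3, \pm 1)\}$, each of which is characteristic with $c \cdot c = 10$. The identity
\[
(b_1 + 3 b_2)^2 + (b_1 - 3 b_2)^2 + (3 b_1 + b_2)^2 + (3 b_1 - b_2)^2 = 20 (b_1^2 + b_2^2) = 20\, B \cdot B
\]
shows that if all four failed $|c \cdot B| < B \cdot B + 2$, then $4 (B \cdot B + 2)^2 \le 20\, B \cdot B$, which rearranges to $(B \cdot B)^2 - (B \cdot B) + 4 \le 0$; the quadratic $t^2 - t + 4$ has discriminant $-15 < 0$, making this impossible for $B \ne 0$. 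Hence some $c^\ast \in \{(1, \pm 3), (3, \pm 1)\}$ satisfies $|c^\ast \cdot B| < B \cdot B + 2$.

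To finish, I verify $|c \cdot A| < A \cdot A + 2$ for \emph{every} $c$ in this set, which is where the hypothesis on $A$ is used. For $c = (1, \pm 3)$, the triangle inequality gives $|c \cdot A| \le |a_1| + 3 |a_2|$, and
\[
A \cdot A + 2 - (|a_1| + 3 |a_2|) = (|a_1|^2 - |a_1|) + (|a_2| - 1)(|a_2| - 2);
\]
the first summand is $\ge 6$ when $|a_1| \ge 3$ and nonnegative on integers otherwise, while the second summand is nonnegative on integers and $\ge 2$ when $|a_2| \ge 3$, so either hypothesis on $A$ forces a strictly positive sum. A parallel calculation for $c = (3, \pm 1)$ bounds $|c \cdot A| \le 3 |a_1| + |a_2|$ and shows $A \cdot A + 2 - (3|a_1| + |a_2|) = (|a_1|^2 - 3|a_1|) + (|a_2|^2 - |a_2| + 2) > 0$ under either hypothesis. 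Taking $c = c^\ast$ then contradicts Theorem \ref{t:st2}. The only subtle step is choosing the test set $\{(1, \pm 3), (3, \pm 1)\}$: the quantity $\sum_c (c \cdot B)^2 = 20\, B \cdot B$ is linear in $B \cdot B$ while the failure threshold $(B \cdot B + 2)^2$ is quadratic, so a pigeonhole argument handles an arbitrary nonzero $B$ at once, while the matching $A$-bound is a routine integer inequality under $|a_1| \ge 3$ or $|a_2| \ge 3$.
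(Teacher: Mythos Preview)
Your proof is correct and, in fact, considerably more streamlined than the paper's.  Both arguments rest on Theorem~\ref{t:st2} applied with the same family of characteristic vectors $(3,\pm1)$, $(1,\pm3)$ (the paper also uses their negatives, which is equivalent to your re-orientation trick).  The paper, however, proceeds by a three-way case split according to whether $A$ or $B$ is a multiple class, invoking Rokhlin's genus bound and the Kervaire--Milnor congruence to cut down the multiple cases to a short explicit list, and then in the primitive case writes $B=\pm(a_2,-a_1)$ and runs a sign-by-sign analysis to select the correct $c$.  Your two devices eliminate all of this: re-orienting each sphere so that $c\cdot[\Sigma_i]\ge0$ turns the theorem's conclusion into the absolute-value form $|c\cdot A|\ge A\cdot A+2$ or $|c\cdot B|\ge B\cdot B+2$, and the averaging identity $\sum_{c}(c\cdot B)^2=20\,B\cdot B$ produces a $c^\ast$ satisfying $|c^\ast\cdot B|<B\cdot B+2$ for \emph{every} nonzero $B$ at once, with no need to know that $B$ is proportional to $(a_2,-a_1)$ or to treat multiples separately.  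The remaining integer inequalities showing $|c\cdot A|<A\cdot A+2$ for all four test classes are essentially the same computations that appear (in sign-dependent form) in the paper's primitive case.  What your approach buys is uniformity and independence from the auxiliary results of Rokhlin and Kervaire--Milnor; what the paper's approach buys is perhaps slightly more explicit bookkeeping in the borderline multiple cases, though this is not actually needed for the lemma as stated.
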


\begin{proof}  Denote that $A=(a_1,a_2)$.  As these are represented by disjoint surfaces $\Sigma_A$ and $\Sigma_B$, $A\cdot B=0$, and thus $B=d(a_2, -a_1)$ for some $d\in\mathbb Q$.  

Assume that $A$ is a multiple class and that $\Sigma_A$, $\Sigma_B$ are disjoint spheres. Using the results on multiple classes in Rokhlin \cite{R}, the only multiple classes which can be represented by embedded spheres are precisely the ones in Theorem \ref{t:law}.  Assuming that $a_i\not\in I$, this leaves the classes
\[
A=\pm(3,0)\mbox{  and  }B\in\{(0,\pm 1), (0,\pm 2), (0,\pm 3)\}
\mbox{   
or 
}
A=\pm(0,3)\mbox{  and  }B\in\{(\pm 1,0), (\pm 2,0), (\pm 3,0)\}.
\] 
The pairings such that $A+B$ contains only odd entries cannot be represented by disjoint spheres, as this would imply that either $(3,1)$ or $(3,3)$ are represented by embedded spheres.  However, this is not the case on account of Kervaire-Milnor's congruence and Rokhlin's estimate.
In the remaining cases, apply Theorem \ref{t:st2} to prove the claim; for example, suppose that $A=(3,0)$ and $B=(0,2)$.  Choose $c=(3,1)$, and note that $c\cdot c>\sigma(M)$, that $c\cdot A>0$ and that $c\cdot B>0$.  Then
\[
A\cdot A-c \cdot A= 9-9=0\mbox{  and  }B\cdot B-c\cdot B=4-2=2,
\]
hence both $\chi(\Sigma_a)$ and $\chi(\Sigma_B)$ are non-positive.  The remaining cases are similar.  For example, if $A=(3,0)$ and $B=(0,-2)$, choose $c=(3,-1)$, and the same result follows.

Assume now that $B$ is a multiple class, but that $A$ is primitive.  If $B$ contains a 0, then $A$ contains a $\pm 1$ at the same spot; for example, if $B=(\pm 3,0)$, then $A=(0,\pm 1)$.  Thus the classes with a 0 either lead to a class $A$ with $a_i\in I$ or a $A+B=(3,1)$-type class, which is not represented by an embedded sphere.
This leaves $B=(\pm 2,\pm 2)$.  A calculation similar to that above again shows that both Euler characteristics in Theorem \ref{t:st2} are non-positive. For example, if $B=(2,-2)$, then $A=\pm(1,1)$.  Choose $c=(3,-1)$ or $(1,-3)$; in both cases,
\[
A\cdot A-c \cdot A= 2-2=0\mbox{  and  }B\cdot B-c\cdot B=8-8=0.
\]

Assume finally that both $A$ and $B$ are primitive.  Then $A=(a_1,a_2)$ and $B=\pm(a_2, -a_1)$.  For the initial argument, choose $c=(3,1)$ as the characteristic class and assume that both
\begin{equation}\label{e:est}
c\cdot A=3a_1+a_2\mbox{  and  }c\cdot B=\pm 3a_2\mp a_1
\end{equation}
are non-negative.  Applying Theorem \ref{t:st2}, either
\begin{equation}\label{e:a}
-\chi(\Sigma_A)\ge A\cdot A-c \cdot A= a_1^2-3a_1+a_2^2-a_2
\end{equation}
or
\begin{equation}\label{e:b}
-\chi(\Sigma_B)\ge B\cdot B-c\cdot B=a_1^2\pm a_1+a_2^2\mp 3a_2
\end{equation}
must hold.   Focus now on the right hand side terms:
\begin{equation}\label{e:rhs}
a_1^2-3a_1+a_2^2-a_2\mbox{  and  }a_1^2\pm a_1+a_2^2\mp 3a_2.
\end{equation}

Observe that on $\mathbb Z$, $x^2\pm 3x\ge -2$ and $x^2\pm x\ge 0$.  Moreover, switching between the signs reflects the curve about the y-axis.  Thus, any estimates on $x$ will have their signs reversed under this switch.

It follows that $a_1^2-3a_1\ge -2$ and $a_1^2+\pm a_1\ge 0$ and thus, if $|a_2|\ge 3$, both equations in \ref{e:rhs} are non-negative.  Similarly, it follows that $a_2^2-a_2\ge 0$ and $a_2^2\mp 3a_2\ge -2$.  If $|a_1|\ge 3$, then again both equations in \ref{e:rhs} are non-negative.

This completes the initial argument where both equations in \ref{e:est} are non-negative.  If both are non-positive, then use instead the class $(-3,-1)$ to obtain
\begin{equation}\label{e:rhs2}
\underbrace{a_1^2+3a_1}_{\ge -2}+\underbrace{a_2^2+a_2}_{\ge 0}\mbox{  and  }\underbrace{a_1^2\mp a_1}_{\ge 0}+\underbrace{a_2^2\pm 3a_2}_{\ge -2}.
\end{equation}
These are the same equations one would obtain for $A=(-a_1,-a_2)$ in the argument using \ref{e:rhs}.  Thus, the same result as before holds.

In order to handle the case in which the equations in \ref{e:est} have opposite signs, use the classes $(1,-3)$ and $(-1,3)$.  This is equivalent, in \ref{e:a} and \ref{e:b}, to switching $A$ and $B$ and one pair of signs.  It thus does not change the result obtained above.

More precisely, if $B=\pm (a_2,-a_1)$ and 
\begin{equation}\label{e:est2}
c\cdot A=3a_1+a_2\ge 0\mbox{  and  }c\cdot B=\pm (3a_2-a_1)<0,
\end{equation}
then use $c=\pm (1,-3)$ to obtain
\[
c\cdot A=\pm(a_1-3a_2)>0\mbox{  and  }c\cdot B= a_2+3a_1\ge 0.
\]

Furthermore, if
\begin{equation}\label{e:est3}
c\cdot A=3a_1+a_2< 0\mbox{  and  }c\cdot B=\pm (3a_2-a_1)\ge 0,
\end{equation}
then use $c=\pm(-1,3)$ to obtain
\[
c\cdot A=\pm(-a_1+3a_2)\ge 0\mbox{  and  }c\cdot B=-(a_2+3a_1)> 0.
\]

Consider now the functions arising for $c=(1,-3)$:
\begin{equation}\label{e:rhs3}
a_1^2-a_1+a_2^2+3a_2\mbox{  and  }a_1^2\mp 3a_1+a_2^2\mp a_2.
\end{equation}
These are identical to \ref{e:rhs} for the classes $A=(-a_2,a_1)$ and $B=\pm(a_1,a_2)$.  Hence the same estimates hold.

If we use $c=(-1,3)$, then this is again the same as using the negative of the previous classes in the initial argument, thus applying the initial argument to the classes $(a_2,-a_1)$ and $-d(a_1,a_2)$.

\end{proof}

\begin{lemma}
For any non-trivial class $A= (a_1,a_2)\in H_2(M,\mathbb Z)$ not in the list of Theorem \ref{t:law}, $A$ cannot be represented by two (or more) disjoint essential spheres.
\end{lemma}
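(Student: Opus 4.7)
The plan is to argue by contradiction. Suppose $A$ is represented by two or more disjoint essential spheres. Since the intersection form of $M$ is positive definite of rank two (from $b^+=2$ and $b^-=0$), any family of pairwise disjoint nonzero classes is pairwise orthogonal, hence linearly independent, and so contains at most two elements. Thus the configuration consists of exactly two spheres $S_1, S_2$ with classes $A_1, A_2$ satisfying $A_1 + A_2 = A$, $A_1 \cdot A_2 = 0$, and each $A_i \cdot A_i > 0$. The contrapositive of Lemma \ref{l:1} then reduces matters to a finite problem: when both members of a disjoint pair are embedded spheres, every coordinate of each $A_i$ must lie in $I = \{0, \pm 1, \pm 2\}$.

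I would then enumerate the pairs $(A_1, A_2) \in (\{-2, \ldots, 2\}^2 \setminus \{(0,0)\})^2$ satisfying $A_1 \cdot A_2 = 0$. If one class is axis-aligned, orthogonality forces both classes onto perpendicular coordinate axes, and a direct check shows all sixteen possible sums $A = A_1 + A_2$ already appear in the list of Theorem \ref{t:law}. If neither class is axis-aligned, all four coordinates lie in $\{\pm 1, \pm 2\}$, and a short tabulation shows the only sums $A$ that fall outside the list of Theorem \ref{t:law} are the characteristic classes $\pm(3, \pm 1), \pm(1, \pm 3)$ and the non-characteristic classes $\pm(4, 0), \pm(0, 4)$.

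To eliminate these remaining cases I would appeal to Theorem \ref{t:st2} with a carefully chosen characteristic class $c$. For characteristic $A$ the clean choice is $c = A$: orthogonality yields $c \cdot A_i = A \cdot A_i = A_i \cdot A_i > 0$, and since neither $A_i$ is a unit vector in the non-axis-aligned case, $c \cdot c = A_1 \cdot A_1 + A_2 \cdot A_2 \ge 4 > 2 = \sigma(M)$. Theorem \ref{t:st2} would then force $-2 \ge A_i \cdot A_i - c \cdot A_i = 0$ for at least one $i$, a contradiction. For $A = \pm(4, 0)$ the only admissible decomposition is $A_1 = (2, \pm 2), A_2 = (2, \mp 2)$ (up to overall sign), and I would test $c = (3, 1)$: one verifies $c \cdot c = 10 > 2$, $c \cdot A_i \in \{4, 8\}$, and $A_i \cdot A_i - c \cdot A_i \in \{0, 4\}$, again violating the conclusion of Theorem \ref{t:st2}; the case $A = \pm(0, 4)$ is handled symmetrically with $c = (1, 3)$. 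The main obstacle is organizing the enumeration cleanly, and the key simplifying observation is that $c = A$ handles every characteristic sum uniformly by orthogonality, leaving only the single family of non-characteristic sums $\pm(4,0), \pm(0,4)$ to treat by hand.
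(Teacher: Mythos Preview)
Your proof is correct and shares the paper's overall architecture: both arguments use the rank-two definiteness to force exactly two orthogonal summands, invoke Lemma~\ref{l:1} to pin all coordinates of $A_1,A_2$ into $I=\{0,\pm1,\pm2\}$, and then enumerate. The difference lies in how the leftover sums are eliminated. The paper organizes the enumeration via $A_2=d(b,-a)$ and splits on $d=\pm1$ versus $d=\pm2$; for the residual classes $(\pm1,\pm3)$, $(\pm3,\pm1)$, $(\pm4,0)$, $(0,\pm4)$ it appeals to outside input---Kervaire--Milnor for the characteristic ones and Rokhlin's divisibility bound for the $4$-classes---together with the (implicit) tubing observation that two disjoint spheres yield a single sphere in the summed class. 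You instead stay entirely inside the machinery of Theorem~\ref{t:st2}: the choice $c=A$ for characteristic $A$ is particularly clean, since orthogonality gives $c\cdot A_i=A_i\cdot A_i$ and the adjunction inequality collapses to $-\chi(\Sigma_i)\ge 0$ automatically, and a single explicit $c$ dispatches the $(\pm4,0)$ family. Your route is more self-contained (one tool rather than three) and avoids the tubing step; the paper's route has the minor advantage of recording, as a byproduct, that the residual sums are not even representable by a \emph{single} sphere.
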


\begin{proof}  Observe that no three non-trivial classes can be orthogonal in $M$, as $M$ is definite and has $b^+=2$.  Assume that $A$ can be represented by two disjoint spheres.   Write $A=A_1+A_2$ with $A_1\cdot A_2=0$ and assume that $A_i$ is each represented by an embedded sphere.  
\begin{enumerate}[leftmargin=*]
\item If one of $A_i$ satisfies the conditions of Lemma \ref{l:1}, then any other surface, disjoint from the sphere representing $A_i$, cannot be an embedded sphere.  Thus it is not possible that $A_1$ and $A_2$ are both represented by disjoint embedded spheres.  

\item This leaves that
\[
A_1,A_2\in\{(a_1,a_2)\;|\; |a_i|\le 2\}.
\]
Note that we may remove $(0,0)$ and, as before, the condition on orthogonality implies that if  $A_1=(a,b)$, then $A_2=d(b, -a)$ for some $d\in\mathbb Z$. 

If $d=\pm 1$, then the only classes that can be obtained from this decomposition that are not in the list of Theorem \ref{t:law} are $(\pm 1, \pm 3)$, $(\pm 4,0)$ and $(0,\pm 4)$.  The classes $(\pm 1, \pm 3)$ are not represented by an embedded sphere due to Kervaire-Milnor's congruence for characteristic classes.  For the divisible classes $(\pm 4,0)$ and $(0,\pm 4)$, Rokhlin \cite{R} bounded the genus below by 2.

If $d=\pm 2$, then $|a|,|b|\le 1$, or one of the components of $A_2$ is too large and Lemma \ref{l:1} applies.  Then the only class that can be obtained from this decomposition that is not in the list of Theorem \ref{t:law} is $(\pm 1, \pm 3)$.  As before, these classes are not represented by an embedded sphere.

\end{enumerate}

\end{proof}

\begin{cor} Let $M$ be a smooth, closed, connected, definite four-manifold with $b_1(M)=0$ and $b^+(M)=2$.  If $A$ is not one of the classes in Theorem \ref{t:law}, then it admits no representation by two disjoint essential spheres.
\end{cor}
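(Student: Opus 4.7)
The plan is to derive the Corollary as an immediate consequence of the preceding (unlabeled) Lemma, which already carries all the geometric content. Suppose, for contradiction, that $A \in H_2(M,\Z)$ is a class not appearing in the list of Theorem \ref{t:law}, yet $A$ admits a representation by two disjoint essential spheres $\Sigma_1$ and $\Sigma_2$. Set $A_i = [\Sigma_i]$, so that $A = A_1 + A_2$, $A_1 \cdot A_2 = 0$ (by disjointness), and $A_1, A_2 \neq 0$ (by essentiality). This is the decomposition the preceding Lemma rules out.

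If $A \neq 0$, we are exactly in the setup of that Lemma, which concludes that $A$ must lie in the list of Theorem \ref{t:law}, contradicting the hypothesis. If instead $A = 0$, then $A_1 = -A_2$ would force $A_1 \cdot A_1 = -A_1 \cdot A_2 = 0$; but by assumption the intersection form on $H_2(M,\Z) \cong \Z^2$ is definite, so no non-zero class has vanishing self-intersection, and this case is also excluded.

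All of the substantive work is therefore absorbed by the preceding Lemma, which in turn leans on Lemma \ref{l:1}. The hard part, already overcome there, is an extended case analysis that invokes Theorem \ref{t:st2} with characteristic classes such as $(\pm 3, \pm 1)$ and $(\pm 1, \pm 3)$ to handle the bulk of decompositions, then uses Kervaire--Milnor's congruence and Rokhlin's divisibility estimates to dispose of the residual small-entry cases $|a_i| \le 2$. For the Corollary itself there is no additional obstacle: the only step is to translate a pair of disjoint essential spheres into the homological decomposition above and invoke the preceding Lemma.
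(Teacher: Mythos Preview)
Your proposal is correct and matches the paper's approach: the Corollary is stated immediately after the unlabeled Lemma with no separate proof, so it is meant to follow directly from that Lemma, exactly as you argue. Your treatment of the $A=0$ case is a small but valid addition, since the Lemma assumes $A$ is non-trivial while the Corollary does not.
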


These results imply a weaker version of Conj. \ref{c:law}.  Note also that this shows that the class $(3,2)\in H_2(\mathbb CP^2\#\mathbb CP^2,\mathbb Z)$ cannot be represented by two disjoint embedded spheres.  It is still unknown if this class can be represented by an embedded sphere.

Nouh \cite{No} studied certain classes of surfaces in $\mathbb CP^2\#\mathbb CP^2$.  The key argument here was to study surfaces in $\mathbb CP^2\backslash B^4$ with a boundary given by a knot $K$.  These were then glued to a disk $\Delta\subset \mathbb CP^2\backslash B^4$ which has $\partial \Delta=K$.  Using this construction, upper bounds were determined for the genus of certain curves; for example if $n\ge 1$, then
\[
g_m((2n+1,0))\le 2n^2-n-1.
\]
Similarly, if $n\ge 2$, then
\[
g_m((2n,0))\le 2n^2-3n.
\]
Furthermore, a minimal genus representative of $(4,\pm 1)$ was constructed using this technique. 

These results lead to the following speculation:  for a class $A=(a_1, a_2)\in H_2(\mathbb CP^2\#\mathbb CP^2,\mathbb Z)$ with $a_i >0$, there exist two disjoint algebraic curves of genus $\binom{a_i-1}{2}$. 

\begin{speculation}
For a configuration of two essential surfaces representing $A$, the sum of the genera is at least $\binom{a_1-1}{2}+\binom{a_2-1}{2}$. 
\end{speculation}

%
%

\section{The Tilted Transport}

In this section, a construction first described in \cite{DLW} is presented.  This turns out to be the circle sum in certain situations.
Let $\pi:(E^{2n},\w_E)\rightarrow D^2$ be a symplectic Lefschetz
fibration.  This means that

\begin{itemize}
  \item $(E,\w_E)$ is a symplectic manifold with boundary $\pi^{-1}(\partial
  D^2)$;
  \item $\pi$ has finitely many critical points $p_0,\dots,p_n$ away
  from $\partial D^2$, while $\pi^{-1}(b)$ is a closed symplectic
  manifold symplectomorphic to $(X,\w)$ when $b\neq \pi(p_i)$ for any
  $i$;
  \item fix a complex structure $j$ on $D^2$.  There is
  another complex structure $J_i$, defined near $p_i$, so that $\pi$
  is $(J_i,j)$-holomorphic in a holomorphic chart $(z_1,\dots,z_n)$ near $p_i$, and under this chart,
  $\pi$ has a local expression $(z_1,\dots, z_n)\mapsto
  z_1^2+\dots+z_n^2$.

\end{itemize}

Take a regular value of $\pi$, $b_0\in D^2$ as the base point.
Suppose that one has a submanifold $Z^{2r-1}\subset \pi^{-1}(b_0)$. Then $Z$ has \textit{isotropic dimension 1} if, at each $x\in Z$,
$(T_xZ)^{\perp\w}\cap T_xZ=\R\langle v_x\rangle$.  We call $v_x$ an
\textit{isotropic vector} at $x$; for example, relevant in what follows, a closed curve on a surface.

Suppose that we have a (based) Lefschetz fibration $(E,\pi, b_0)$ with a
submanifold $Z\subset \pi^{-1}(b_0)$ of isotropic dimension 1.  Let
$\gamma(t)\subset D^2$ be a path with $\gamma(0)=b_0$.  Assume that
$\gamma(t)\neq \pi(p_i)$ for all $t$ and $i$.  Notice that there is a natural
symplectic connection on $E$ in the complement of singular points as
a distribution.  Then, for $x\in E\backslash \coprod_{i=1}^n \{p_i\}$, the
connection at $x$ is defined by $(T^vE)_x^{\perp\w}$ .  Here $T^vE$
is the subbundle of $TE$ defined by vertical tangent spaces
$T(\pi^{-1}(\pi(x)))$ at point $x$.  $E|_\gamma=\pi^{-1}(\gamma)$
  thus inherits this
connection  and thus a trivialization by parallel transports.  The
symplectic connection also defines a unique lift of $\gamma'$ to a
vector field of $E|_\gamma$.  We will use $\pi^{-1}(\gamma')$ to
represent this lift.

Now choose a vector field $V$ on $E|_\gamma$ tangential to the fibers; one
obtains a flow defined by $V+\pi^{-1}(\gamma')$.  Suppose that the
following holds:

\begin{condition}\label{cond:c}\hfill
\begin{itemize}
  \item  $Z_t\subset E_{\gamma(t)}$ is the time $t$-flow of $Z_0=Z$,
  and each $Z_t$ is of isotropic dimension 1.
  \item For any $x_t\in Z_t$, let $v_{x_t}$ be the isotropic vector.  Then $\w(V,v_{x_t})\neq0$.
\end{itemize}

\end{condition}

Let $\widehat Z=\coprod Z_t$.  It is then easy to see that $\wh Z$
is a symplectic submanifold of $E$ with a boundary on $E_{\gamma(0)}$
and $E_{\gamma(1)}$.  We call $\wh Z$ a \textit{tilted transport} of
$Z$.  A special case is when $\gamma(0)=\gamma(1)$ and $Z_0=Z_1$. In
such cases, one could be able to adjust $V$ appropriately so that
$\wh Z$ is a smooth closed symplectic submanifold, which we will
call a \textit{tilted matching cycle}. 

%

\brmk The tilted transport construction as described here can be
easily generalized in many ways.  A most interesting generalization
is that one could admit $V$ with singularities, and thus change the
topology of $Z_t$ when $t$ evolves.  \ermk

\subsubsection{A local variant of tilted transport and
symplectic circle sum}

We explain next how to use a rather simple case of tilted transport
to partly recover the circle sum construction in symplectic
geometry. Note that the corresponding counterpart is well-known in the
smooth category.

The setting under consideration is a pair of disjoint symplectic
surfaces $S_0, S_1\subset (M^4,\w)$.  Suppose that one has an open set
$U\subset M$ so that $U\cong S^1\times [-1,1]\times D^2(2)$, a
trivial bundle over $D^2$ with annulus fibers, while $S_i\cap
U=S^1\times [-1,1]\times \{i\}$.  We claim that there is an embedded
symplectic surface $S$ which is the circle sum of $S_0$ and $S_1$.

The question is local, so we concentrate on the trivial bundle $U$.
Remove the part $S^1\times [-\frac{1}{2},\frac{1}{2}]$ from the
fibers $U_0$ and $U_1$.  Consider two embedded arcs $\gamma(t)$ and
$\bar\gamma(t)$, which only intersect at
$\gamma(0)=\bar\gamma(0)=0$ and $\gamma(1)=\bar\gamma(1)=1$.  By
choosing $V$ appropriately on $U_\gamma$, one easily constructs a
tilted transport which concatenates $S^1\times
[-1,-\frac{1}{2}]\times \{0\}$ with $S^1\times [\frac{1}{2},1]\times
\{1\}$.  Similarly, one concatenates $S^1\times
[-1,-\frac{1}{2}]\times \{1\}$ with $S^1\times [\frac{1}{2},1]\times
\{0\}$ by choosing another tilted transport on $\bar\gamma$.  This
realizes the circle sum, as claimed.

As immediate consequence of the construction, by taking a finite
number of nearby copies of generic fibers in an arbitrary Lefschetz
fibration of dimension $4$, is that one realizes $n[F]$ as an embedded
symplectic surface by performing symplectic circle sums on two
consecutive copies.

{\bf Acknowledgement}  We thank Weiwei Wu for making us aware of Lambert-Cole's results.  The second author is supported by NSF 1611680.

\end{document}